\DeclareSymbolFont{rsfscript}{OMS}{rsfs}{m}{b}
\DeclareSymbolFontAlphabet{\mathrsfs}{rsfscript}
\renewcommand{\mathcal}{\mathrsfs}
\newcommand{\nc}{\newcommand}
\nc{\on}{\operatorname}
\nc{\DMO}{\DeclareMathOperator}
\numberwithin{equation}{subsection}
\theoremstyle{plain}
\newtheorem*{thm*}{Theorem}
\newtheorem*{lem*}{Lemma}
\newtheorem*{prop*}{Proposition}
\newtheorem*{cor*}{Corollary}
\theoremstyle{definition}
\newtheorem*{defn*}{Definition} 
\newtheorem*{conj*}{Conjecture}
\newtheorem*{quest*}{Question}
\newtheorem*{exmp*}{Example}
\newtheorem*{ass*}{Assumption}
\theoremstyle{remark}
\newtheorem*{rem*}{Remarks}
\newtheorem*{note*}{Note}
\newtheorem*{case*}{Case}
\renewcommand{\p@enumi}{\thesubsection}
\renewcommand{\p@enumii}{\thesubsection\theenumi}
\newenvironment{num}{\renewcommand{\theenumi}{(\alph{enumi})}
                      
                                          \begin{enumerate} }
                    {\end{enumerate} }
 \newenvironment{conds}{\renewcommand{\theenumi}{(\roman{enumi})}
                       
                        \begin{enumerate} }
                     {\end{enumerate} }
\nc{\be}{\begin{equation}}
\nc{\ee}{\end{equation}}
\nc{\bee}{\begin{equation*}}
\nc{\eee}{\end{equation*}}
\nc{\bs}{\begin{split}}
\nc{\es}{\end{split}}
\nc{\bc}{\begin{cases}}
\nc{\ec}{\end{cases}}
\nc{\bml}{\begin{multline}}
\nc{\eml}{\end{multline}}
\nc{\bmll}{\begin{multline*}}
\nc{\emll}{\end{multline*}}
\nc{\lb}[1]{\label{#1}\mar{#1}}
\nc{\belb}[1]{\mar{#1}\begin{equation}\label{#1}}
\newcommand{\mpair}[1]{\pair{\,#1\,}}
\newcommand{\mset}[1]{\set{\,#1\,}}
\newcommand{\pair}[1]{\langle #1\rangle}
\newcommand{\set}[1]{\{#1\}}
\nc{\mc}{{\mathcal}}
\nc{\mf}{{\mathfrak}}
\nc{\CA}{{\mathcal A}}
\nc{\CB}{{\mathcal B}}
\nc{\CC}{{\mathcal C}}
\nc{\CD}{{\mathcal D}}
\nc{\CE}{{\mathcal E}}
\nc{\CF}{{\mathcal F}}
\nc{\CG}{{\mathcal G}}
\nc{\CH}{{\mathcal H}}
\nc{\CI}{{\mathcal I }}
\nc{\CJ}{{\mathcal J }}
\nc{\CK}{{\mathcal K }}
\nc{\CL}{{\mathcal L}}
\nc{\CM}{{\mathcal M}}
\nc{\CN}{{\mathcal N}}
\nc{\CO}{{\mathcal O}}
\nc{\CP}{{\mathcal P}}
\nc{\CQ}{{\mathcal Q}}
\nc{\CR}{{\mathcal R}}
\nc{\CS}{{\mathcal S}}
\nc{\CT}{{\mathcal T}}
\nc{\CU}{{\mathcal U}}
\nc{\CV}{{\mathcal V}}
\nc{\CW}{{\mathcal W}}
\nc{\CX}{{\mathcal X}}
\nc{\CY}{{\mathcal Y}}
\nc{\CZ}{{\mathcal Z}}
\nc{\bbZ}{{\mathbb Z}}
\nc{\bbC}{{\mathbb C}}
\nc{\bbR}{{\mathbb R}}
\nc{\bbP}{{\mathbb P}}
\nc{\bbF}{{\mathbb F}}
\nc{\bbN}{{\mathbb N}}
\nc{\bbD}{{\mathbb D}}
\nc{\bbO}{{\mathbb O}}
\nc{\bbQ}{{\mathbb Q}}
\def\a{\alpha}
\def\b{\beta}
\def\g{\gamma}
\def\G{\Gamma}
\def\D{\Delta}
\def\e{\epsilon}
\def\l{\lambda}
\def\L{\Lambda}
\def\o{\omega}
\def\th{\theta}
\def\Th{\Theta}
\def\to{\rightarrow}
\def\lsupp#1#2{\kern\scriptspace\vphantom{#2}^{#1}\kern-\scriptspace#2}
\def\lsubb#1#2{\kern\scriptspace\vphantom{#2}_{#1}\kern-\scriptspace#2}
\def\lsub#1#2{\tensor*[_{#1}]{#2}{}}
\def\lrsub#1#2#3{\tensor*[_{#1}]{#2}{_{#3}}}
\newcommand{\ssect}{\subsection}
\newcommand{\seq}{{\,\subseteq\,}}
\newcommand{\sneq}{{\,\subsetneq\,}}
\newcommand{\sreq}{{\,\supseteq\,}}
\newcommand{\sm}{{\,\setminus\,}}
\newcommand{\eset}{{\emptyset}}
\nc{\wt}{\widetilde}
\nc{\wh}{\widehat}
\nc{\ol}{\overline}
\def\dotcup{\hskip1mm\dot{\cup}\hskip1mm}
\newcommand{\op}{^{{\rm op}}}
\newcommand{\mar}{\marginpar}
\newcommand{\join}{\bigvee}
\newcommand{\meet}{\bigwedge}
\DMO{\Supp}{{\mathrm{Supp}}}
\DeclareMathOperator{\Aut}{{\mathrm{Aut}}}
\DMO{\Rad}{{\mathrm{Rad}}}
\DMO{\Ann}{{\mathrm{Ann}}}
\nc{\Id}{\mathrm{Id}}
\DMO {\Add}{{\mathrm Add}}
\DMO {\add}{{\mathrm add}}
\DMO {\Img}{{\mathrm Im}}
\DMO {\coim}{{\mathrm Coim}}
\DMO {\coker}{{\mathrm Coker}}
\DMO {\colim}{\varinjlim}
\DMO {\plim}{\varprojlim}
\DMO {\mEnd}{{\mathrm End}}
\DMO {\mend}{{\mathrm end}}
\DMO {\Proj}{{\mathrm Proj}}
\DMO {\Ext}{{\mathrm Ext}}
\DMO {\ext}{{\mathrm ext}}
\DMO {\tor}{{\mathrm tor}}
\DMO {\Tor}{{\mathrm Tor}}
\DMO {\Hom}{{\mathrm Hom}}
\DMO {\HOM}{{\mathrm HOM}}
\DMO {\Modfg}{{\mathrm -Modfg}}
\DMO {\modulfg}{{\mathrm -modfg}}
\DMO {\modul}{{\mathrm -mod}}
\DMO {\Mod}{{\mathrm -Mod}}
\DMO {\pdim}{{\mathrm proj.dim.}}
\DMO {\PD}{{{\mathrm Proj.Dim.}}}
\DMO {\gldim}{{\mathrm gr.gl.dim.\ }}
\DMO {\grad}{{\mathrm rad }}
\DMO {\Sheaves}{{\mathrm Sh}}
\DMO {\Flab}{{\mathrm Fl}}
\DMO {\Poinc}{{\mathrm Poinc}}
\DMO {\Groth}{{{{\mathrm K}}_0}}
\DMO {\Mat}{{\mathrm Mat}}
\DMO \Sl{{\mathrm sl}}
\DMO \SL{{\mathrm SL}}
\DMO \Gl{{\mathrm gl}}
\DMO \GL{{\mathrm GL}}
\DMO \lcm{\mathrm{lcm}}
\DMO \rank{{\mathrm rank}}
\DMO \diag{{\mathrm diag}}
\nc{\bib}{\bibitem}
\nc{\pa}{\partial}
\DMO {\Addp}{{\mathrm Add}'}
\DMO {\addp}{{\mathrm add}'}
\DMO {\hatGr}{{\widehat{{\mathrm K}}_0}}
\DMO {\NExt}{{\mathrm NExt}}
\DMO {\nExt}{{\mathrm next}}
\DMO {\DExt}{{\mathrm DExt}}
\DMO {\dext}{{\mathrm dext}}
\newcommand{\Int}{{\mathbb Z}}
\newcommand{\Nat}{{\mathbb N}}
\newcommand{\real}{{\mathbb R}}
\newcommand{\simquo}{\negthinspace\negthinspace\sim\thinspace}
\DMO{\ob}{ob}
\DMO{\mor}{mor}
\DMO{\tr}{tr}
\DMO{\spec}{spec}
\newcommand{\ab}{\mathfrak{A}}
\newcommand{\gp}{\hspace{.12em}}
\DMO{\cov}{cov}
\DMO{\Sym}{Sym}
\DMO{\Dih}{Dih}
\DMO{\Spec}{Spec}
\DMO{\domn}{dom}
\DMO{\cod}{cod}
\DMO{\ord}{ord}
\newcommand{\rec}{\text{\rm rec}}
\newcommand{\nat}{\natural}
\newcommand{\cp}{\complement}
\newcommand{\gpdpreord}{\mathbf{Gpd\text{\bf -}PreOrd}}
\newcommand{\fprc}{\mathbf{FPrd}}
\newcommand{\rlec}{\mathbf{RdE}}
\newcommand{\setc}{\mathbf{Set}}
\newcommand{\catc}{\mathbf{Cat}}
\newcommand{\grpdc}{\mathbf{Gpd}}
\newcommand{\bringc}{\mathbf{BRng}}
\newcommand{\balgc}{\mathbf{BAlg}}
\newcommand{\abcatc}{\mathbf{AbCat}}
\newcommand{\prootc}{\mathbf{Prd}}
\newcommand{\prootcp}{\mathbf{Prd'}}
\newcommand{\rootc}{\mathbf{rd}}
\newcommand{\rtd}{\mathbf{Rd}}
\newcommand{\posetc}{\mathbf{Ord}}
\newcommand{\csl}{\mathbf{CSL}}
\newcommand{\preordc}{\mathbf{PreOrd}}
\newcommand{\setproot}{\setc\text{\bf -}\prootc}
\newcommand{\grpdset}{\grpdc\text{\bf -}\setc_{\pm}}
\begin{document}

\title{Groupoids,  root systems   and weak order I}

\author{Matthew Dyer}
\address{Department of Mathematics 
\\ 255 Hurley Building\\ University of Notre Dame \\
Notre Dame, Indiana 46556, U.S.A.}
\email{dyer.1@nd.edu}

\begin{abstract} This is  the first of a series of  papers concerned with 
certain structures called rootoids and protorootoids, the definition of which is   abstracted from  formal 
properties of   Coxeter groups 
with their root systems   and weak orders. A protorootoid is
 defined to be a groupoid (i.e. a small category with every morphism 
 invertible)  equipped with a  representation in the 
category of Boolean rings and a corresponding  $1$-cocycle.
Weak preorders of protorootoids are preorders on  the sets of  their morphisms with fixed codomain, defined by the natural ordering  of the corresponding values of the  cocycle in the  Boolean ring in which they lie. A  
rootoid is  a protorootoid satisfying axioms which imply  that its weak  preorders are partial orders embeddable as order ideals of complete ortholattices. 
Rootoids and protorootoids may 
  be    studied alternatively  as groupoids  with abstract root systems.
The most novel results of these papers involve    categories
  of rootoids with  favorable 
properties including   existence of small limits and  of functor rootoids, which provide interesting new structures even for 
 finite Weyl groups such as the  symmetric
 groups.
    This  first introductory paper  gives  basic definitions and  terminology, and explains     the correspondence between  cocycles and  abstract  root systems. 
        Examples discussed here  include rootoids  attached to Coxeter groups, to real  simplicial hyperplane 
   arrangements and to orthogonal groups. Rudimentary  properties of  subclasses of rootoids called principal rootoids and complete rootoids are established. Principal  (respectively, complete principal) rootoids naturally generalize arbitrary (respectively finite) Coxeter systems.   
 \end{abstract}

\maketitle
%
%
%
\section*{Introduction}\label{s0}
\subsection*{Introduction to this work}  Coxeter groups are an important class of discrete groups 
which occur naturally in many areas of mathematics. 
A Coxeter group $W$  has  a   presentation as a group generated by  a set $S$ of  involutory  simple generators subject to  braid relations, which specify  the order of the product of each pair of simple generators. The presentation is  determined by   a square matrix, the Coxeter matrix, with  its entries in $\Nat_{\geq 1}\cup\set{\infty}$ equal to the orders of these products. In applications related to Lie theory,  $W$  often occurs in conjunction with root systems, which are combinatorial  
structures on which $W$  acts.   
Classically,   root systems $\Phi$ are subsets of real vector spaces which encode  a description of an  action of  $W$ 
 as a finite or  discrete real reflection group.
   More abstract 
 notions of root  systems   are also useful, for example
 in the theory of buildings.  Examples of 
  Coxeter groups which are  obviously important  are the finite and affine Weyl groups, and especially the symmetric groups $S_{n}$, which arise 
  as  the Weyl groups of the special linear groups
   and Lie algebras. 
    
 A root system $\Phi$ of a Coxeter group $W$  may be viewed 
 abstractly as   a $W$-set (the elements of which are called 
 roots) equipped with a commuting free action by the group 
 $\set{\pm 1}$ and a distinguished set $\Phi_{+}$ of
  $\set{\pm 1}$-orbit representatives, called the set of positive 
  roots.   There is a bijection $\a\mapsto s_{\a}$ from the set of positive roots
   $\Phi_{+} $ to the set $T=\mset{wsw^{-1}\mid w\in W,s\in S}$ of reflections.
   For any $w\in W$, define 
   $\Phi_{w}:=\Phi_{+}\cap w(-\Phi_{+})$ and $N(w):=\mset{s_{\a}\mid \a\in \Phi_{w}}$.   Let $\wp(T)$ be the Boolean ring of subsets of $T$. As a function $N\colon W\to \wp(T)$, $N$ is called the reflection cocycle of $W$ (see  \cite{DyRef}). It is well known that $N(w)=\mset{t\in T\mid l(tw)<l(w)}$ where $l(w)=\min(\mset{n\in \Nat\mid w=s_{1}\cdots s_{n}, s_{i}\in S})$ is the standard length of $w$.   
   
        In the case of $S_{n}$, its simple generators are the adjacent transpositions $(i,i+1)$ for $ i=1,\ldots, n-1$, the   reflections are the transpositions
  $(i,j)$ for $ i\neq j$ and  $N(w):=\mset{(i,j)\mid i<j, w^{-1}(i)>w^{-1}(j)}$ identifies with the set of inversions of $w^{-1}$. The root system $\Phi$ may be described as follows. Let $S_{n}$ act in the natural way on $\real^{n}$ permuting coordinates, and let $\mset{e_{1},\ldots, e_{n}}$ be the standard ordered orthonormal basis. Then $\Phi:=\mset{e_{i}-e_{j}\mid i\neq j}$ with natural action by $W\times \set{\pm1}$ and with $\Phi_{+}:=\mset{e_{i}-e_{j}\mid i< j}$. One has $\Phi_{w}=\mset{e_{i}-e_{j}\mid i<j, w^{-1}(i)>w^{-1}(j)}$ and $s_{e_{i}-e_{j}}=(i,j)\in T$.

     The weak right order  of $W$ is the 
 partial order $\leq$ on the set $W$ defined by the condition
  $x\leq y$ if $\Phi_{x}\seq \Phi_{y}$ (equivalently, $N(x)\seq N(y)$ or $l(y)=l(x)+l(x^{-1}y))$.
 Weak right  order is important in the basic combinatorics of $W$. For example,   reduced expressions (that is expressions, $w=s_{1}\cdots s_{n}$ with each $s_{i}\in S$ and $n=l(w)$)
  correspond bijectively  to
  maximal chains in the  order ideal of weak  right order generated by $w$. Weak right  order  is a complete meet semilattice, which is a complete lattice if and only if  $W$ is finite (see \cite{BjBr}).

 To explain the framework adopted in  these papers, we discuss one of the  main results in the context of Coxeter groups.  Recall that 
  a groupoid is a small category $G$ in which every morphism is invertible.  Let $G$ be the groupoid with one object $\bullet$, with the Coxeter group  $W$ as  automorphism group of $\bullet$. Let  $H$ be a non-empty  groupoid, assumed connected (i.e. with at least one  morphism between any two of its objects) for simplicity. The  functor category $G^{H}$ has  as  objects the  functors $H\to G$, and as morphisms the natural transformations  of such functors. It is itself   a groupoid. It has  a subgroupoid $K=G^{H}_{\square}$  containing  all objects of $G^{H}$, but only those morphisms $\nu\colon  F\to F'$ (where $F,F'\colon H\to G$ are functors)  such that for each morphism  $h\colon a\to b$ in $H$, \begin{equation*} \nu_{b}(\Phi_{F(h)})=\Phi_{F'(h)}\end{equation*}  where $\nu_{b}\colon F(b)\to F'(b)$ is the component of $\nu$ at $b$  (note that $\nu_{b}, F(h), F'(h)\in \Hom_{G}(\bullet,\bullet)=W$).
The groupoids studied in \cite{BrHowNorm} are covering quotients of a  very restricted  class of components of groupoids  $G^{H}_{\square}$ (with their objects indexed by subsets of $S$ instead of subsets of $W$) in  cases in which $H$ is connected and simply connected i.e. has a unique morphism between any two of its objects. 
   
  In general, the groupoids $K$  have  properties closely 
  analogous  to those  of Coxeter groups mentioned above. For example, $K$ 
    has complete semilattice weak right orders, it has a canonical  braid presentation specified  by families of Coxeter matrices (which here give 
    the lengths of the joins of pairs of simple generators) with    additional 
    combinatorial data, it has an abstract root system, and  for  a connected groupoid $L$, one can form analogously  $K^{L}_{\square}$ 
     with similar properties, and so on recursively.   Analogous facts  hold for the groupoids of \cite{BrHowNorm}, where existence of  canonical 
      presentations was the main result and the other properties  just mentioned were not established,  for 
  the Coxeter groupoids (with root systems) of \cite{HY}, and for other   natural classes of groupoids. The (components of) groupoids in 
  \cite{BrHowNorm} are not in general Coxeter groupoids, though they turn out to be so if $W$ is finite.
    
These papers  study a class of structures,  which we call rootoids, which provides a natural framework within which  the results  mentioned in the previous paragraph, and many  others, can be   stated precisely  and proved. 
The notion of rootoid  is obtained by  abstracting as an axiom   the   fact 
        that the weak right order of a Coxeter group is a  complete meet    semilattice. (Note that the weak right order of a Coxeter group 
     is not translation invariant, and that the theory of rootoids is  quite 
     distinct from the theory of ordered groups and groupoids). 
     The notion of rootoid is far more general than that of Coxeter group, but       special subclasses of 
     rootoids have properties very similar to certain basic properties of Coxeter groups. The results stated above are proved by showing that rootoids
 are preserved by certain  natural categorical constructions (for example, the category of rootoids has all  small limits) which also preserve the conditions defining these subclasses.

  Before discussing  rootoids, the  closely related notion of  signed groupoid-sets, of 
     which $(W,\Phi)$ is the archetypal example, will be described.  
The definition  involves    a category of signed sets, which  has as  objects sets $S$ with a specified free action by $\set{\pm 1}$ and a  
  specified set $S_{+}$ of $\set{\pm 1}$-orbit 
  representatives, and as   morphisms the
   $\set{\pm 1}$-equivariant functions. A 
     signed groupoid-set    is a pair $R=(G,\Psi)$ of a groupoid $G$ and a representation  $\Psi$ of $G$ in  the category of signed sets. For $g\colon a\to b$ in $G$, define $\Psi_{g}:=\Psi(b)_{+}\cap \Psi(g)(-\Psi(a)_{+})$.
    The set $\lsub{b}{G}$ of  morphisms of $G$ with codomain $b$ is preordered by  $g\leq h$ if $\Psi_{g}\seq \Psi_{h}$. This gives one weak right preorder of $R$ for each object $b$ of $G$.
    Then  $R$ is said to be rootoidal if each weak right  preorder is a complete meet semilattice satisfying a   technical condition  called JOP  which implies that the weak right order  embeds as an  order ideal   in a complete ortholattice.     One says that $R$ is principal if $G$ has a set of generators (called simple generators)  such that the minimal length of each morphism $g$ as a product of these generators is equal to $\vert \Psi_{g}\vert $. 
    
     The above construction of  the groupoid  
    $K=G^{H}_{\square}$ can be adapted to the signed groupoid-set $R=(G,\Psi)$.  An abstract root system of $K$ is constructed as follows. First, there is a  signed groupoid-set  $(K,\L)$ where $\L$ is obtained  by pullback of the functor  $\Psi$ 
    under the evaluation homomorphism $\e_{b}\colon K=G^{H}_{\square}\to G$ for some chosen object $b$ of $H$. 
    One forms a new signed groupoid-set  $(K,\L^{\rec})$
    where the real compression $\L^{\rec}$ 
    is a sub-quotient  representation of $\L$. It is  obtained, roughly speaking,  by discarding any imaginary roots of $\L$ (defined as those $\a\in \L(a)$ such that $\a$ and $\L(g)(\a)$ have the same sign (positive or negative)
    for all morphisms $g$ of $G$ with domain $a$)  and identifying any two  remaining roots $\a,\b\in \L(a)$ which are equivalent in the equivalence relation associated to  dominance preorder i.e.  such that  for all $g$ with domain $a$,  $\L(g)(\a)$ and $\L(g)(\b)$ have the same sign. 
    
    The  results about  $K$ referred to above    in the case of Coxeter groups now follow from three facts. First, a Coxeter group gives rise to a principal,  rootoidal, 
    signed groupoid-set $(W,\Phi)$. Secondly, if  $R=(G,\Psi)$ above  is any  principal, rootoidal, signed groupoid-set, then $(K,\L^{\rec})$
    is also  a  principal, rootoidal signed groupoid-set. Finally, the underlying groupoid of  any  principal, rootoidal  signed groupoid-set  has a braid presentation.     

   Rootoidal signed groupoid-sets provide one  framework in which our main results may be formulated, and are  particularly natural in certain cases (e.g related to Coxeter groups) where they can be realized concretely in real vector spaces. However, they seem less 
 convenient in more abstract examples   and for  general categorical arguments. For these reasons,    many results and arguments will be expressed   in terms of  analogues of the reflection cocycle $N$ of $W$, especially since the  concept of cocycle   is already well established in  general contexts  and is very convenient for calculation. 
    This approach leads to the     notions of protorootoid and rootoid (explained later in the introduction) 
    which  will be used   for our general   development.  
    
      The previous discussion  illustrates the three primary goals of these papers:    to give examples of rootoids (of various classes), to describe general properties of rootoids (of various classes) and to study categories of rootoids (of various classes) and especially to establish their closure under certain natural categorical constructions. The third goal involves the most novel results, but the others are necessary to make these new results useful,  and these aims will be pursued together in these papers. 
         This first paper  gives  basic definitions, terminology and 
         examples. The second  paper will give   more examples of rootoids 
         and morphisms of rootoids, and discusses
  (without proof, and partly informally)  some   ideas, results and 
  questions  from subsequent papers of the series. 
These  two papers together give the most rudimentary part of the theory, an  
introduction
to the main ideas and results of subsequent papers  and  indications of   ingredients of some of the main
 proofs. The rest of the theory, and the   proofs, will appear in the later  papers of the series.

    The original motivations for this work were in relation to the set of 
    initial sections  of reflection orders of a Coxeter group $W$, which 
 has  significant applications in the combinatorics of Kazhdan-Lusztig 
 polynomials and  Bruhat order, and conjectural relevance to 
 associated representation theory. Ordered by inclusion, the set of initial sections is a poset in 
 which the weak order   of $W$ embeds naturally as an order ideal.  
 Many of the main ideas  of this work originated in the study in   
    \cite{DyWeak} of a longstanding  conjecture which is  equivalent to the 
    statement that the set of initial sections is a complete ortholattice.   
      Other papers which have suggested useful ideas include    
 \cite{Muhl}, \cite{DyEmb1}, \cite{DyEmb2},
       \cite{BjEZ},  \cite{CH}, \cite{HV},  \cite{HY}, and
  especially      \cite{BrHowNorm}.  
      Though the theory described in  this paper and its sequels has been largely 
   motivated by study of  Coxeter groups, the most novel aspects of  the work  involve   the interaction of 
    several partly independent  formalisms  of a quite general nature  and it  seems unlikely that rootoids  and protorootoids provide the  natural setting for all these formalisms. 
  Limitations of space and energy  preclude  detailed discussion of all the   known     generalizations and    variations, but many  are  indicated in the remarks.

         Finally, we mention that
 formulations and proofs of many of  the main results in these 
 papers require
 only    knowledge of rather limited basic aspects of   algebra, 
 combinatorics of ordered sets and category 
 theory as can be found in, for example, \cite{Lang},
  \cite{Davpr} and \cite{MacL}.    More  background knowledge,
 and especially familiarity with Coxeter groups,   is 
  assumed for the examples and applications.  
    As general references on Coxeter groups, 
    see  \cite{Bour}, \cite{Hum} 
    and  \cite{BjBr}. 
     
 \subsection*{Introduction to this paper}      
     The concerns of this first paper are as follows.  
       Section \ref{s1} gives  background and fixes frequently used   general
       notation. Section \ref{s2} defines  protorootoids,  which provide a  
       convenient general context for this work.         A protorootoid is a   
       triple $\CR:=(G,\L,N)$ of a groupoid $G$, a
representation $\L$ of $G$  in the category of Boolean rings (i.e. a 
functor $\L\colon \G\to\bringc$) and a 
  cocycle $N\in Z^{1}(G,\L)$ for the  representation of $G$  underlying
   $\L$ in $\Int\modul$.  Concretely, $N$ is a function which  associates 
   to each morphism $g$ of $G$, with codomain $a$, say,  an element 
   $N(g)$ of the Boolean ring $\L(a)$, satisfying the cocycle condition 
   $N(gg')=N(g)+gN(g')$ where we abbreviate $gN(g'):=(\L(g))(N(g'))$.
  The motivating examples
are  the protorootoids  $\CC_{(W,S)}:=(W,\L,N)$ attached to  Coxeter 
systems $(W,S)$. In this,
 $\L$ denotes   the conjugacy representation of $W$ on the Boolean 
 ring 
$\wp(T)$ of subsets of  the reflections $T$, and 
$N\colon W\to \wp(T)$ is the reflection 
cocycle of $(W,S)$. 

Associated to  the  
protorootoid $\CR$ is a family of weak right preorders  indexed by 
 the objects of  $G$; the set  $\lsub{a}{G}$ of morphisms of $G$ with 
 fixed codomain $a$ is 
preordered by $x\leq y$ if $N(x)\leq N(y)$ in the natural partial order of 
the Boolean ring $\L(a)$ (which is defined by $r\leq r'$ if $r=rr'$). There is a natural category of protorootoids 
 in which, in particular,  morphisms induce preorder preserving maps of 
 associated  weak right preorders. Every protorootoid $(G,\L,N)$ has an underlying groupoid-preorder, forgetting $\L$ and $N$ but remembering the groupoid $G$ and its weak right preorders; subsequent papers will show that
  an alternative approach to much of the theory of protorootoids and rootoids can be developed in terms of groupoid-preorders.
 
  Important subclasses of protorootoids are    defined 
 in Section \ref{s3}. These include the faithful ones (for which weak right 
 preorders are partial orders),  complete faithful ones (the weak right 
 orders of which are  
 complete lattices), the faithful,  interval finite ones (for which intervals 
 in weak right order are finite, and for which the set of atoms of weak 
 right orders is  a groupoid generating set)  and the principal ones (for 
 which the 
 underlying groupoid  has  a set of simple generators 
 with respect to which the length of any morphism $g\colon b\to a$ is equal to the rank in $\L(a)$ of $N(g)$ i.e the length of a maximal chain from $0$ to $N(g)$).   The class of preprincipal    protorootoids is defined      by requiring, in   
    an interval finite, faithful protorootoid,   a natural  generalization of a length-compatibility condition from   \cite{Muhl} (expressed as a cocycle compatibility condition in \cite{DyEmb1}, \cite{DyEmb2}) which requires  that    $l_{S}(w's')=l_{S}(w')\pm l_{S}(s')$  for all $w\in W'$, $s'\in S'$ for special  embeddings  $W'\seq  W$ of  Coxeter groups, where $S$  and  $S'$ are the  sets of   simple generators of $W$ and  $W'$ respectively.

   The abridgement of a protorootoid $\CR=(G,\L,N)$ is defined by
   $\CR^{a}:=(G,\L',N')$ where $\L'$ is the $G$-subrepresentation of
    $\L$
    (in the category of Boolean rings) generated by the values of
     the 
    cocycle $N$, and $N'$ is the evident restriction  of $N$. 
 It gives a minimal version of $\CR$ preserving $G$ 
    and its weak  right   preorders.  Abridgement is an analogue for protorootoids of real compression, but has better functorial properties. A technically important fact proved here is that a protorootoid is preprincipal if and only if its abridgement is principal. 
     
     In Section \ref{s4}, a   rootoid is defined to be  a faithful
       protorootoid in which the weak 
 orders  are complete meet semilattices 
  which satisfy a  condition called JOP
 (join orthogonality property).  To explain JOP, say that two morphisms 
 $x,y$ with common codomain $a$ are orthogonal if 
 $N(x) N(y)=0$ in $\L(a)$ (or $x^{-1}\leq x^{-1 }y$, in terms of 
 weak  right preorder). Then JOP requires that if each of a non-empty 
 family of morphisms $(x_{i})$ is orthogonal to $y$ and  $(x_{i})$ has a 
 join (least upper bound) $x$, then $x$ is orthogonal to $y$. 
 
  Section \ref{s4} also contains  the definition of the category of rootoids.
Consider two rootoids and a morphism of protorootoids 
between them such that each induced 
map $\theta$ of weak orders  preserves all meets (greatest lower 
bounds) and joins of 
non-empty sets 
which exist in its domain. Each  $\th$ then has a partially 
defined (categorical) left adjoint $\th^{\perp}$, defined on the
 order ideal generated by the image of $\th$. The morphism is 
 defined to 
be a morphism of rootoids if it satisfies the following adjunction 
orthogonality property (AOP): for all $\th$, one has that $\th(x)$ and $y$ (in the order 
ideal) are 
orthogonal if and only if  $x$ and $\th^{\perp}(y)$ are orthogonal
(in this, ``if'' holds automatically). 
A   subcategory of the category of rootoids which is important in relation to functor rootoids  is that of rootoid local embeddings, in which morphisms are restricted so  that the weak orders of the domain embed as join-closed meet subsemilattices of the weak orders of the codomain.

     The  principal rootoids have many basic properties in common with 
  Coxeter groups and their study is one main focus of these papers
  (study of complete rootoids is another).  
   However, in categorical contexts it is technically  more convenient to 
   work with preprincipal rootoids.  The JOP 
   ensures that
     several categorical constructions with protorootoids  preserve rootoids  and 
     that the morphisms involved   induce semilattice   homomorphisms 
     between corresponding weak right orders. The AOP   ensures further that  many such  constructions 
     preserve preprincipal rootoids; composing such constructions with 
     abridgement gives constructions preserving principal rootoids. 
         Though we do not emphasize it in the introductory papers, many results of a categorical nature about principal rootoids hold for a larger class (of  regular, saturated, pseudoprincipal rootoids) in which the principal rootoids are distinguished as the interval finite members.
         See Section \ref{s3} for the definitions, which are motivated partly  by conjectures on initial sections of reflection orders of Coxeter groups.
                
     Any signed groupoid-set $(G,\Psi)$ gives rise to a protorootoid $(G,\wp_{G}(\Psi/\set{\pm 1}),M)$ where the $1$-cocycle $M\in Z^{1}(G,\wp_{G}(\Psi/\set{\pm 1})) $ classifies $\Psi$ as a principal bundle with fiber $\set{\pm 1}$ over a suitably defined  quotient representation $\Psi/\set{\pm 1}$ of $\Psi$,
and $\wp_{G}$ is a power-set functor  from $G$-sets to $G$-Boolean rings.  
The triple $(G,\Psi/\set{\pm 1},M)$ is called a set protorootoid.
In Section \ref{sa5},   categories of set protorootoids and 
    signed groupoid-sets  are  defined.
It is shown that  the construction above induces an equivalence between these  two categories, and the category of set    protorootoids is trivially equivalent to  a  subcategory of the    category of protorootoids.    These facts
 will be  supplemented by a  result in subsequent papers
 showing that 
 protorootoids have representations as set protorootoids (in a 
 similar  manner as Boolean rings have representations as rings of 
 sets, by Stone's theorem). Together (see \ref{ssa5.8}), these results make it possible to restate  many of the results of these papers  in terms of  signed groupoid-sets and then to obtain stronger  analogues of some of them  for   special classes of signed groupoid-sets which have  realizations in real vector spaces (there are several natural  notions of realization, of   different strength and generality). 

 Section \ref{sa6}  recalls some background on Coxeter groups, and discusses     basic examples of rootoids. The reader may wish to read it  in parallel with earlier sections. The
 protorootoid $\CC_{(W,S)}$ of a Coxeter system  is shown   to be a 
 principal rootoid, complete if and only if  $W$ is finite. Two proofs are given; the more self-contained one involves 
  a  useful    semilocal criterion (SLC)  for an interval finite, faithful protorootoid to be a rootoid.
  Protorootoids are also  defined in Section  \ref{sa6} from finite, real, central hyperplane 
  arrangements and shown using \cite{BjEZ} to be rootoids if and only if 
   the arrangement is simplicial, in which 
  case the rootoid is complete and principal.  Similarly, Coxeter 
  groupoids and Weyl groupoids with root systems in the 
  sense of \cite{CH} and \cite{HY} give rise to principal rootoids 
  (complete if and only if  finite, in the 
  case of a connected Coxeter groupoid) and  protorootoids naturally  attached to  
  simplicial oriented geometries (see \cite{BjEZ}) are rootoids  if and 
  only if  the oriented geometry is 
  simplicial (in which case the rootoid is complete and principal). Discussion of these and most other important  examples is deferred to  subsequent papers, though  for diversity, rootoids  with  the additive group of real numbers or   a compact real orthogonal group as underlying groupoid are also described  in Section \ref{sa6}.    

Finally in this paper,   a brief   Section \ref{sa7}  shows that each weak order of  a complete rootoid has  a longest element   (analogous   to that  of a finite  Coxeter group) making it a complete ortholattice. These longest elements are shown to  give rise to an involutory automorphism of the rootoid (the analogue of the diagram automorphism given by conjugation by the longest element of a finite Coxeter group). 
  
  \section{Background, notation  and conventions}
 \label{s1}
           \subsection{Ordered sets}\label{ss1.1} The following 
            terminology and notation  for posets,  
            lattices and semilattices will be 
           used  throughout this series of  papers. 
     
  A \emph{poset} (i.e. partially ordered set)  $(\CL,\leq)$
   will usually be denoted just as 
  $\CL$  (note that  the empty poset $\eset$ is permitted).
 An \emph{order ideal} of a poset $(\L,\leq)$ is a subset $\G\seq \L$ 
 such that  if   
  $x,y\in \L$ satisfy $x\leq y\in \G$, then $x\in \G$.
   \emph{Order coideals} $\G$ are 
  defined dually: if $x\geq y\in \G$, then $x\in \G$. Closed  
  intervals in $\L$ are denoted  as $[x,y]=[x,y]_{\L}:=
  \mset{z\in \L\mid x\leq z\leq y}$. An 
  element $x$ is an \emph{atom} of $\L$ if $\L$ has a
   minimum element, say $m$, and 
  $\vert [m,x]\vert =2$ (generally,  the cardinality of a set $A$ is
   denoted  by
   $\vert A\vert$).   \emph{Coatoms} are defined dually. 
     
    The category of posets and order preserving maps  is denoted
     $\posetc$.  
    It is a full, reflective subcategory  of the category $\preordc$ of 
    preordered sets and preorder preserving maps (by definition, a \emph{reflective} (resp., \emph{coreflective}) subcategory of a category  is one such that the corresponding inclusion functor  has a left (resp., right) adjoint).      Recall that 
 a    \emph{preordered set} $(L,\leq)$ is a set $L$ equipped with a 
 reflexive and 
    transitive relation $\leq$.  The left adjoint $\preordc\to \posetc$ sends 
        the  preordered set $(L,\leq)$ to its associated poset, which is 
        defined as follows. The associated equivalence relation 
    $\sim$ of  $(L,\leq) $ is the relation on $L$ given by $a\sim b$ if
     $a\leq b$ and
     $b\leq a$. Denote the equivalence class of $a\in L$ as $[a]_{L}$ or
      $[a]$ for short.
    Then the associated  poset $(L/\simquo, \preceq)$
     of $(L,\leq)$ has  $L/\simquo:=\mset{[a]\mid a\in L}$ and
    $[a]\preceq [b]$ if and only if  $a\leq b$.  
    
    A preordered set (or partial order) is \emph{directed} if all of its finite subsets have an upper bound (in particular, it is non-empty). A subset of a preorder is directed if it is directed in the induced preorder. A subset $X$ of a preordered set  (or poset) $(L,\leq)$ is \emph{cofinal} if for any $y\in L$, there  exists $x\in X$ with $y\leq x$.

  In any partially ordered set,  the \emph{meet}
   or greatest lower bound of  a subset $\G$ (resp., the \emph{join} or 
   least 
   upper bound of the subset $\G$) is  denoted by $\meet \G$
    (resp., $\join \G$)   if it exists. Notions of meets and joins are  defined  similarly for preordered sets (but   are only determined up to equivalence when they exist).
   Write $\meet\set{\g_{1},\ldots, \g_{n}}=\g_{1}\wedge
    \cdots \wedge \g_{n}$ and 
   $\join\set{\g_{1},\ldots, \g_{n}}=\g_{1}\vee\cdots\vee \g_{n}$ for joins 
   and
    meets of finite sets. Notation such as  $\meet_{i}x_{i},\wedge x_{i}$ 
    and $\join_{i}x_{i}, \vee x_{i}$ will be 
    used for meets and joins of indexed families $\set{x_{i}}_{i\in I}$.
  A \emph{complete lattice} $\CL$ is a non-empty partially ordered set 
  in which every   subset has a meet and join; in particular, $\CL$ has a 
  minimum element $0_{\CL}$ and a maximum element $1_{\CL}$. A 
  \emph{complete ortholattice} is a complete lattice $\CL$ equipped 
  with a given order reversing map $x\mapsto x^{\cp}\colon \CL\to \CL$, called \emph{orthocomplementation}, 
  such that for all  $x\in \CL$,  $(x^{\cp})^{\cp}=x$,
   $x\wedge x^{\cp }=0_{\CL}$ and $x\vee x^{\cp}=1_{\CL}$.  
  
  A \emph{complete meet semilattice} is a possibly empty, partially 
  ordered set $\CL$ in which any non-empty subset $Z$ has a meet 
   $\meet Z$.  This implies that any 
  subset $Z$ of $\CL$ which is bounded above in $\CL$ has a  join
    $\join Z$ in $\CL$ (namely, the meet in $\CL$ of the set of upper bounds of $Z$ in $\CL$), and that $\CL$ has a minimum element if it is non-empty.
     A \emph{complete join-closed  
  meet subsemilattice} $Z$ of  a complete meet  semilattice $\CL$ is 
  defined to be  a subset $Z$ of $\CL$ which is closed under
   formation of arbitrary meets in $\CL$ of  non-empty subsets
    of $Z$, and such that for any  non-empty  subset of $Z$ with
     a join in $\CL$, that join is an element of $Z$. Such a subset $Z$ is
      itself a complete meet semilattice, with meets (and those joins 
      which exist) of its non-empty subsets 
 coinciding with the meets (and joins) of those subsets  in $\CL$.  
 Note that it is  not required that the minimum element of a non-empty
 complete join-closed meet subsemilattice of $\CL$  must coincide with
  the minimum element of $\CL$. Dually,   \emph{complete join semilattices} and their  \emph{complete
meet-closed join subsemilattices} are defined.

 \subsection{Boolean rings}\label{ss1.2} Some of the  following  
 terminology is slightly  non-standard (cf. \cite{Davpr}), 
 but it is convenient for our purposes here. An element of a monoid
  or ring is \emph{idempotent} if $x^{2}=x$.
A \emph{Boolean ring} is a (possibly non-unital)  ring in which every 
element is idempotent. A Boolean ring $B$ is commutative and 
satisfies $x+x=0$ for all $x\in B$.  Also, $B$ has a partial order $\leq$
 defined by $x\leq y$ if $xy=x$, for $x,y\in B$.  It will usually be more
  convenient to  denote this partial order as $\seq$, the multiplication
   as $(x,y)\mapsto x\cap y$ and the addition as $(x,y)\mapsto x+y$.
    The join $x\vee y$ of two elements $x,y$ of $B$ always exists; it is
     given by $x\vee y=x+y+xy$ and  will be denoted as 
     $x\cup y$. The meet of $x$ and $y$ exists and  is their product 
     $x\cap y$.
The additive identity  $0$ of $B$ may   be denoted  
as  $\eset$.  A unital Boolean ring $B$ is called a
 \emph{Boolean algebra}; in any such Boolean algebra, the 
 \emph{complement} $x^{\cp}$ of $x\in B$ is defined by
$x^{\cp}:=1_{B}+x$. This 
notation is in accord with the standard notation 
for the Boolean algebra  $\wp (A)$  arising as the power set of a set 
$A$, where  $\seq$ is inclusion, $\cap$ is intersection, $+$ is 
symmetric difference $x+y=(x\cup y)\sm (x\cap y)$, $x^{\cp}=A\sm x$, 
and $\cup$ is union. It is partially justified in general by the existence 
(by Stone's theorem) of an isomorphism of any Boolean ring with a 
subring of some Boolean algebra $\wp (A)$.
The field $\bbF_{2}$ of two elements is an important example of a Boolean algebra.
\begin{rem*} The use of $\cup, \cap, \seq$ for Boolean rings enables us to reserve
$\vee, \wedge,\leq$ mostly for weak (right) orders.\end{rem*}
 \subsection{}\label{ss1.3} The category $\bringc$ of Boolean  rings has 
 Boolean rings as objects, and its  morphisms are   ring   
homomorphisms, with usual composition.   The category $\balgc$ of 
Boolean algebras is the  subcategory of $\bringc $ with Boolean 
algebras as objects, and ring homomorphisms which  preserve  identity 
elements as morphisms.

There is  a contravariant power set functor
 $\wp \colon  \setc \to \bringc$ such that for any set $A$, $\wp (A)$ is 
 the power set of $A$, regarded as Boolean ring as above and for 
 $f\colon A\to A'$, $\wp (f)\colon \wp (A')\to \wp (A)$ is the map
$x\mapsto f^{-1}(x):=\mset{a\in A\mid f(a)\in x}$ for $x\seq A'$, which is a 
homomorphism of Boolean rings.  The functor  $\wp$
 factors through the inclusion $\balgc\to\bringc$.

 \subsection{Categories, morphisms, stars}\label{ss1.4} For any small 
 category  $G$, the sets of objects and morphisms of $G$
  are denoted as
  $\ob(G)$ and $\mor(G)$ respectively.  The domain (resp., codomain) of a morphism $\a$ in $G$ is denoted $\domn(\a)$ (resp., $\cod(\a)$). For any $a,b\in \ob(G)$,
   define    $\lrsub{a}{G}{b}:=\Hom_{G} (b,a)$, 
the \emph{left star} $\lsub{a}{G}:=\dotcup_{b\in \ob(G)}\lrsub{a}{G}
{b}$ of $G$ at $a$  and the \emph{right star} $G_{b}:=
\dotcup_{a\in \ob(G)}\lrsub{a}{G}{b}$ of $G$ at $b$. In these equations 
and elsewhere, the symbol ``$\dotcup$'' emphasizes that a union is  
one of pairwise disjoint sets. In a Boolean ring, 
$ z=\dotcup_{i=1}^{n} z_{i}$ would mean similarly that $z$
 is the join of $z_{1},\ldots, z_{n}$ where $z_{i}\cap z_{j}=\eset$ 
 (i.e. $z_{i}z_{j}=0$) for $i\neq j$.  To simplify terminology, left stars 
 will be   simply called  stars when they are referred to 
 other than notationally. 
 
 For any subset $X$ of $\mor(G)$ and for   all $a,b\in\ob(G)$, set
 $\lsub{a}{X}:=X\cap \lsub{a}{G}$, $X_{b}:=X\cap G_{b}$ and 
 $\lrsub{a}{X}{b}:=X\cap \lrsub{a}{G}{b}$. If a product $g_{1}\ldots g_{n}$ of morphisms $g_{i}$ of $G$ appears in a formula (such as $g=g_{1}\ldots g_{n}$)
 it is tacitly required that the $g_{i}$ are such that the composite is defined.  Occasionally, the notation  $\exists g_{1}\cdots g_{n}$ is used to indicate    that a composite $g_{1}\dots g_{n}$ is defined.
 
 For two categories $G,C$ with $G$ small, let $C^{G}$ denote the category of functors $G\to C$, with natural transformations between such functors as morphisms.

  \subsection{Slice category}\label{ss1.5} Given a category $G$ and 
  object $a\in \ob(G)$, there is a \emph{slice category} $G/a$ with 
  objects the morphisms $f\colon b\to a$ in $G$. A morphism 
  $F\colon f\to f'$ in $G/a$, where $f'\colon b'\to a$ in $G$, is a 
  morphism  $F\colon b\to b'$ in $G$ such that $f=f'F$, and 
  composition is induced by composition in $G$. There is a  functor 
  $G/a\to G$ given on objects by $f\mapsto b$ and  on morphisms
   by mapping  $F\colon f\to f'$ in $G/a$ to $F\colon b\to b'$ in $G$, with
    notation as above.  Dually, define the \emph{coslice category} $a\backslash G$. 
 
  \subsection{Opposites}\label{ss1.6} The \emph{opposite}  
  of a category (resp., preorder, poset, ring etc)  $G$ is denoted as  $G^{\op}$. 
 \subsection{Representations}\label{ss1.7} A \emph{representation} of a
  (small) category $G$ in a category $X$ is a functor $F\colon G\to X$. Two representations are equivalent if they are naturally isomorphic as functors. 
   A concrete category  is a  category $X$ equipped with a faithful 
   functor $U\colon X\to \setc $ where $\setc $ is the category of sets
    and functions; examples include $\bringc $  and $\balgc$ 
    (with $U$ the underlying set functor) and $\setc $ (with $U$ 
    the identity functor).  For certain well-known 
      concrete categories,
such as those above, we will often not distinguish notationally  between  an object or morphism of $X$, and its image under $U$, as is customary.

For a representation $F$ of a category $G$ in a concrete 
category $X$,  $U(F(x))$ may be abbreviated as $\lsub{x}{F}$ 
for $x\in \ob(G)$, and
$(U(F(g)))(a)\in \lsub{y}{F}$ may be abbreviated as $ga$ or $g(a)$ for $g\colon x\to y$ 
in $\mor(G)$ and $a\in\lsub{x}{F}$. It will often be tacitly assumed, replacing $F$ by an equivalent functor 
if necessary
that the sets $\lsub{x}{F}$  for $x\in \ob(G)$ are pairwise disjoint. 
By a \emph{subrepresentation} of $F$, we shall mean a representation $F'$ of $G$ in $X$ such that there is a natural transformation $\mu\colon F'\to F$ such that 
all functions $U(\mu_{x})\colon UF'(x)\to UF(x)$ for $x\in \ob(G)$ are inclusion maps. Define a  \emph{trivial representation}  to be a representation that is equivalent to a constant functor.

 \subsection{Groupoids}\label{ss1.8}  A groupoid  $G$ is a 
 small category in which every 
morphism is invertible.  The contravariant self-equivalence 
$\iota_{G}\colon G
\to G$ of the groupoid $G$ induced by inversion  will be denoted as 
$x\mapsto x^{*}$ for short. Thus, $x^{*}:=x$ if $x\in \ob(G)$ and 
$x^{*}:=x^{-1}$ 
if $x\in \mor(G)$. The automorphism groups 
$\lrsub{a}{G}{a}=\Aut_{G}(a)$ of 
objects $a$ of $G$ are called the \emph{vertex groups} of $G$. 
The identity morphism of $G$ at $a$ will be denoted $\lrsub{}{1}{a}$ or 
$\lsub{a}{1}$.
The morphisms $g$ of $G$ which are in some (necessarily 
unique)  vertex group (i.e. such that $\exists gg$) are called \emph{self-composable 
morphisms}. A groupoid is empty 
if its sets of objects and morphisms are empty, and trivial if it has 
one object 
and one morphism. Groups will be regarded in the usual way or 
as groupoids with one object, as convenience dictates.

Given a representation $\L\colon G\to \setc $ of a groupoid $G$ in the
 category of sets, there is a corresponding  representation
 $\wp _{G}(\L)\colon G\to \bringc$ of $G$ in the category of 
Boolean rings,  defined by
$\wp _{G}(\L):=\wp  \L  \iota_{G}$,.  This has  a subrepresentation
$\wp' _{G}(\L)$ such that for each $a\in \ob(G)$,
 $\lsub{a}{(\wp' _{G}(\L))}$ is the subring of 
 $\lsub{a}{(\wp _{G}(\L))}$ consisting of all finite subsets. Note that  $\wp_{G}$ may be regarded as a contravariant functor  $\wp_{G}\colon \setc^{G}\to \bringc^{G}$: if $\nu\colon \L\to \G$ is a natural transformation between functors  $\L,\G\colon G\to \setc$,
 then $\wp_{G}(\nu)\colon \wp_{G}(\G)\to \wp_{G}(\L)$
 is the natural transformation with component at $a\in \ob(G)$ given by $(\wp _{G}(\nu))_{a}=\wp( \nu_{a})\colon \wp (\G(a))\to \wp(\L(a))$.

 \subsection{Connectedness}\label{ss1.9}  A non-empty groupoid $G$ 
  is said to be {\em connected} (resp., {\em simply connected}) if 
    there is at 
least one (resp., at most one) morphism between any two of its objects.
By convention, the empty groupoid is simply connected but not 
connected (note that some  sources regard  the empty groupoid as  connected). A 
connected, simply connected groupoid  $G$ is   determined up to 
isomorphism by the cardinality  $\vert \ob(G)\vert$ of the set  of its 
objects,
 which can be any non-zero cardinal.
 
  \subsection{The category of groupoids}\label{ss1.10} 
  Let $\catc$  denote the category of small categories  with functors as morphisms, with usual composition of functors.  For categories  $G,H$,  $\Hom_{\catc}(G,H)$  is the set of objects of a category, in which a morphism
   $\nu\colon F\to F'$  is a  natural transformation
    $\nu\colon F\to F'$ of functors $G\to H$. For a morphism $\nu'\colon F'\to F''$, the composite $\nu'\nu$ has components
$(\nu'\nu)_{a}=\nu'_{a}\nu_{a}$  for all $a\in \ob(G)$.    
 For our purposes, there is little need for, and we do not adopt, $2$-categorical language, but basic facts (such as the interchange law) about composition of functors and natural transformation will be used without comment.
  
  The category
   $\grpdc$ of groupoids  is the full subcategory of  $\catc$ which
      has groupoids as objects. 
  For groupoids $G,H$,  the category  $\Hom_{\grpdc}(G,H)$ is
   a groupoid; the inverse $\nu^{*}\colon F'\to F$ of a natural transformation $\nu\colon F\to F'$, where $F,F'\colon G\to H$, has components
   $(\nu^{*})_{a}=\nu^{*}_{a}:=(\nu_{a})^{*}$.
  Both $\catc$ and $\grpdc$ are complete and cocomplete i.e. have all 
  limits and colimits of functors from small categories. 
  See \cite{Hig} and \cite{Br} as general references on groupoids, 
  though our terminology and notation differ from theirs.

 \subsection{Components}\label{ss1.11} A \emph{subgroupoid} of  a 
 groupoid $G$ is a subcategory $H$ of $G$ which contains
the inverse in $G$ of any morphism of $H$.  A non-empty subgroupoid
 $H$ of $G$ is called a \emph{component} of $G$ if it is maximal 
 connected subgroupoid  of $G$ (i.e. if $\mor(H)$  is  maximal under  inclusion amongst  
 morphism sets of   connected subgroupoids of $G$). A component is 
 full as a subcategory.   The set of morphisms (resp., objects) of $G$ is 
 the disjoint union of the  sets of morphisms (resp., objects) of its 
 components For an object or morphism $x$ of $G$, $G[x]$ denotes the 
 component of $G$ containing $x$.
 The study of any groupoid largely reduces to that of its components.

 \subsection{Coverings} \label{ss1.12} 
 A morphism  $\pi\colon H\rightarrow G$ in $\grpdc$  is called  a 
{\em covering morphism} or \emph{covering} of $G$ if  the induced maps
 $\lsub{a}{\pi}\colon \lsub{a}{H}\xrightarrow{\cong} \lsub{\pi(a)}{G}$
  given by $h\mapsto \a(h)$  are  bijections for all $a\in \ob(H)$. For 
  such a covering,  each component of $H$ is mapped into a 
  component of $G$; the restriction to a morphism between two  
  components is surjective on both objects and morphisms. For 
  example, the natural embedding  of a connected component 
 $H$ of $G$ as a subgroupoid of $G$ is a covering morphism. 
 For a fixed groupoid $G$,  the category of \emph{covering transformations} or \emph{coverings} of $G$ is the 
 subcategory of  the slice category $\grpdc/G$ with objects the  
 coverings of $G$, and  only those morphisms  which are isomorphisms.  Morphisms in the category of coverings of $G$ are 
 called \emph{covering  transformations}. If a  covering $\pi$ as above is surjective on objects, we call $G$ a \emph{covering quotient} of  $H$ and $\pi$ a \emph{covering quotient morphism}.

  By definition, a {\em universal covering groupoid} of a  groupoid $G$ 
  is a groupoid $H$  equipped with a  covering morphism
   $\pi\colon H\to G$,
  called the \emph{universal covering morphism}, such that that $H$ is 
  simply connected and the  natural map from components of $H$ to 
  components of $G$ induced by $\pi$ is bijective.  The universal 
   covering morphism exists and  is determined up to isomorphism as an object  of 
   the category of covering morphisms of $G$.   It is a covering quotient morphism.

  The construction of the universal covering $\pi\colon H\to G$ in 
  general readily reduces  to the case $G$ is  connected, in which case, 
  $\pi$  may be identified with the 
 natural morphism $\pi\colon G/a\to G$ for any $a\in \ob(G)$.  
 See \cite{Hig}  for basic facts about coverings.

 \subsection{Groupoid $1$-cocycles}\label{ss1.13}
  Let $\Psi\colon G\to \Int\modul$ be a fixed representation of a groupoid $G$
   in  the category of abelian groups.   A \emph{$1$-cocycle} $N$ of $G$ for $\Psi$
is a family of maps $\lsub{a}{N}\colon \lsub{a}{G}\to \lsub{a}{\Psi}$ for 
$a\in \ob(G)$ satisfying the \emph{cocycle condition}
\begin{equation} \label{eq1.13.1}
\lsub{a}{N}(gh)=\lsub{a}{N}(g)+\L(g)( \lsub{b}{N}(h))\end{equation}
for all $a,b\in \ob(G)$, $g\in \lrsub{a}{G}{b}$ and $h\in\lsub{b}{G}$.
Unless confusion is likely,  $N$ is regarded as a function
$\mor(G)\to \cup_{a\in \ob(G)}\lsub{a}{\Psi}$, and 
 $\lsub{a}{N}(g)$  is denoted  simply as $N(g)$ or sometimes
  even $N_{g}$. Abbreviate $(\Psi(g))(x)$ as $gx$
    for $g\in G_{a}$ and $x\in \Psi(a)$.
 Then \eqref{eq1.13.1} states that  for $g,h\in \mor(G)$, 
  \begin{equation} \label{eq1.13.2} N(gh)=N(g)+gN(h).\end{equation}
 For example, 
 given  a family $(x_{a})_{a\in \ob(G)}$ with
  $x_{a}\in\lsub{a}{\Psi}$, there is a cocycle $N$ defined by
   $N(g)=x_{a}-g(x_{b})$ for  $g\in\lrsub{a}{G}{b}$ and all 
   $a,b\in \ob(G)$; a cocycle of this type is called a \emph{coboundary}.
   The cocycles and coboundaries under their natural  pointwise 
   addition form additive abelian groups 
   $Z^{1}(G,\Psi)\sreq B^{1}(G,\Psi)$ and the quotient abelian group 
   \begin{equation}\label{eq1.13.3} H^{1}(G,\Psi)
   :=Z^{1}(G,\Psi)/B^{1}(G,\Psi)\end{equation} is called  the \emph{first 
   cohomology group}.

  The cocycle condition  readily implies that for any cocycle $N$ of 
  $G$ and any identity morphism $1_{a}$ of $G$, one has $N(1_{a})=0$ 
  and for any morphism $g$ of $G$, $N(g^{*})=g^{*}N(g)$.
  
  Given a cocycle $N\in Z^{1}(G,\Psi)$ and a groupoid homomorphism $\a\colon H\to G$, there is a pullback representation $\Psi \a\colon H\to \Int\modul$ of $H$
  and a cocycle $N'\in Z^{1}(H,\Psi\a)$ defined by $N'(h)=N(\a(h))$.
  By abuse of notation, $N'$ will often be denoted as $N'=N\a$.
  Similarly, given a natural transformation $\nu\colon \Psi\to \Psi'$ between functors $\Psi,\Psi'\colon G\to \Int\modul$ and $N\in Z^{1}(G,\Psi)$,  denote by $\nu N:=N''$ the cocycle $N''\in Z^{1}(G,\Psi')$ defined by $\lsub{a}{N}''(g)=\lrsub{}{\nu}{a}(\,\lsub{a}{N}(g))$ for all $a\in \ob(G)$.

Associated to the $G$-cocycle $N$ on $\Psi$, there is a representation
 $\L\colon G\to \Int\modul$ given by setting $\L(x)$ equal to   $\Psi(x)$ 
 for any object $x$ of  $G$, and $(\L(g))(x)=N(g)+g x$ for $g\in {G}_{a}$ 
 and $x\in \lsub{a}{\Psi}$. This will be called the 
 $N$-twisted $G$-action  or \emph{dot action}, and denoted 
\begin{equation}\label{eq1.13.4} (g,x)\mapsto( \L (g))(x)=g\cdot x:= N(g)+g x. \end{equation}

 \begin{rem*} 
  A representation $\Psi\colon G\to \bringc$  gives, by applying the 
 forgetful functor $\bringc\to  \Int\modul$, an underlying representation
  $\Psi_{\Int}\colon G\to \Int\modul$.  Similarly for morphisms (natural transformations) between such representations.   By a cocycle $N$ for a representation $\Psi\colon G\to \bringc$, we 
  shall mean a $1$-cocycle $N$ for  the underlying  representation $\Psi_{\Int}\colon G\to \Int\modul$.
  Analogous conventions   are used for 
  representations $\Psi\colon G\to \balgc$.

 It is particularly important for our applications here  that the values
 $N(g)$, $g\in\lsub{a}{G}$ of the cocycle $N$ are naturally partially 
 ordered (since they  lie in the Boolean ring $\Psi(a)$).  There are  other    notions  of $1$-cocycle  which  can be used in formulating  similar theories (Remark \ref{ssa5.5}(2)--(3) suggests one of them).   \end{rem*} 

\subsection{}\label{ss1.14}  The following facts are well-known, and 
their simple  proofs are omitted.    
   \begin{lem*} Let $G$ be a   connected, simply connected groupoid.
   \begin{num}\item Any representation of $G$  is equivalent to a trivial 
   representation (i.e. a constant functor).
   \item If $\Psi\colon G\to \Int\modul$ is a representation of $G$, then 
   $H^{1}(G,\Psi)=0$ i.e. any $G$-cocycle for  $\Psi$ is a coboundary.
   \end{num}\end{lem*}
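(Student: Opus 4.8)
The plan is to extract the single structural fact that drives both parts: since $G$ is connected it is non-empty (by the convention of \ref{ss1.9}), and being connected and simply connected it has \emph{exactly one} morphism between any ordered pair of objects. Write $\theta_{a,b}\colon b\to a$ for this unique morphism. Uniqueness of the morphism $a\to a$ forces $\theta_{a,a}=1_{a}$, and uniqueness of the morphism $c\to a$ forces the composition law $\theta_{a,b}\,\theta_{b,c}=\theta_{a,c}$ (in particular $\theta_{a,b}^{*}=\theta_{b,a}$). I would then fix once and for all a base object $o\in\ob(G)$; everything is built by transporting along the $\theta$'s to and from $o$.

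For (a), given a representation $F\colon G\to X$, I would define the candidate natural isomorphism $\eta\colon F\to\Delta_{F(o)}$ to the constant functor at $F(o)$ by the components $\eta_{a}:=F(\theta_{o,a})\colon F(a)\to F(o)$. Each $\eta_{a}$ is an isomorphism because $\theta_{o,a}$ is invertible in the groupoid $G$ and functors preserve isomorphisms. To verify naturality, take any morphism $g\colon a\to b$; by simple connectedness $g=\theta_{b,a}$, and the composition law gives $\theta_{o,b}\,\theta_{b,a}=\theta_{o,a}$, so $\eta_{b}\circ F(g)=F(\theta_{o,b})F(\theta_{b,a})=F(\theta_{o,a})=\eta_{a}=1_{F(o)}\circ\eta_{a}$, which is exactly the naturality square for $\eta$ (the top edge of which is $\Delta_{F(o)}(g)=1_{F(o)}$). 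Hence $F$ is naturally isomorphic to the constant functor $\Delta_{F(o)}$, proving (a).

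For (b), given $N\in Z^{1}(G,\Psi)$, I would set $x_{a}:=N(\theta_{a,o})\in\Psi(a)$ for each $a\in\ob(G)$ and claim $N$ is the coboundary attached to the family $(x_{a})_{a\in\ob(G)}$. Indeed, for $g\colon b\to a$ we have $g=\theta_{a,b}$, and the factorization $\theta_{a,o}=\theta_{a,b}\,\theta_{b,o}=g\,\theta_{b,o}$ together with the cocycle condition \eqref{eq1.13.2} yields $x_{a}=N(\theta_{a,o})=N(g)+g\,N(\theta_{b,o})=N(g)+g(x_{b})$, i.e.\ $N(g)=x_{a}-g(x_{b})$. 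This is precisely the coboundary form, so $Z^{1}(G,\Psi)=B^{1}(G,\Psi)$ and therefore $H^{1}(G,\Psi)=0$.

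There is no deep obstacle here; the entire content is the composition law $\theta_{a,b}\,\theta_{b,c}=\theta_{a,c}$ supplied by simple connectedness, which is the engine of both arguments. The only points requiring care are purely bookkeeping: keeping the directions of the canonical morphisms $\theta$ consistent, matching the codomain–domain ordering used in $\Hom_{G}(b,a)$, and tracking the sign in the coboundary formula so that the cocycle condition \eqref{eq1.13.2} lines up with the coboundary definition of \ref{ss1.13}.
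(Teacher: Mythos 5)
Your proof is correct, and since the paper omits the argument as ``well-known and simple,'' yours is exactly the standard proof being alluded to: transport everything to a fixed base object $o$ along the unique morphisms $\theta_{a,b}$, using the composition law $\theta_{a,b}\,\theta_{b,c}=\theta_{a,c}$ forced by simple connectedness. Both the natural isomorphism $\eta_{a}:=F(\theta_{o,a})$ in (a) and the coboundary family $x_{a}:=N(\theta_{a,o})$ in (b) are verified with the correct variance and signs, matching the paper's conventions $\lrsub{a}{G}{b}=\Hom_{G}(b,a)$ and $N(g)=x_{a}-g(x_{b})$.
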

   
\section{Protorootoids}\label{s2}
\subsection{Protorootoids}\label{ss2.1} Protorootoids are important in 
these papers mainly as a framework in which to define and study 
rootoids. However, it is technically convenient to indicate in the 
exposition  many  properties of rootoids which hold even for 
protorootoids, and to give definitions applicable to both  protorootoids 
and  rootoids when possible.

 \begin{defn*} A protorootoid  is  a  triple  $(G,\L,N)$ such that $G$ is
  a groupoid, $\L\colon G\to \bringc $ is a representation of $G$ in the 
  category $\bringc $ and $N\in Z^{1}(G,\L)$ is a $G$-cocycle for
   $\L$.  
\end{defn*}

A  protorootoid $(G,\L,N)$ is  said to be unitary if $\L$ factors as a 
   composite functor $G\to \balgc\to \bringc $ where $\balgc\to \bringc$ 
   is the  inclusion functor.

\begin{rem*} (1) Note that given a groupoid $G$ and functor 
$\L\colon G\to \bringc$, 
$(G,\L, N)$ is  a protorootoid for any $N\in Z^{1}(G,\L)$).  In 
particular, even   for fixed $(G,\L)$, protorootoids $(G,\L,N)$ may exist 
in  abundance (since  $Z^{1}(G,\L)\sreq B^{1}(G,\L)$).

(2)  Despite our use of $\eset$ or $0$ to denote the 
additive identity of $\lsub{a}{\L}$ for each $a\in \ob(G)$, it is tacitly 
assumed 
that the sets $\lsub{a}{\L}$ are pairwise disjoint (cf. \ref{ss1.7}).\end
{rem*}

 \subsection{Weak order}\label{ss2.2} Let $\CR=(G,\L,N)$ be a 
 protorootoid. For $a\in \ob(G)$, define the  subposet $\lsub{a}{\CL}:=
 \mset{N(g)\mid g\in \lsub{a}{G}}$ of $\lsub{a}{\L}$.
The poset  $\lsub{a}{\CL}$ is called  the     \emph{weak order} 
(of $\CR$, or $G$) at   $a$. The weak order at $a$ has a minimum element, 
denoted as $0$ or $\eset$. Those  
 joins and meets which exist in $\lsub{a}{\CL}$ will be denoted using $\vee$ or 
 $\join$, and  $\wedge$ or $\meet$, respectively.

 Define the coproduct  $\CL:=\coprod_{a\in \ob(G)}\,\lsub{a}{\CL}$ in 
 $\posetc$
 and denote its partial order still as $\seq$. In view of our 
 assumption that the 
 sets $\lsub{a}{\L}$ are pairwise disjoint,  $\CL$  is identified with the disjoint 
 union  $\CL:=\dotcup_{a\in \ob(G)}\, \lsub{a}{\CL}$ as set,    with the 
 partial order  which restricts to the weak order on $\lsub{a}{\CL}$ and 
 such that no element of
 $\lsub{a}{\CL}$ is comparable with any element of $\lsub{b}{\CL}$ for 
 any $a\neq b$. 
   The  poset $(\CL,\seq)$ will be called the \emph{big weak order} of 
   $\CR$. Note 
   that it is   stable under the dot action of $G$ in the following sense: if 
   $A=N(g)\in \lsub{a}{\CL}$ where $g\in \lsub{a}{G}$ and 
   $h\in \lsub{b}{G}_{a}$, then
   \begin{equation} \label{eq2.2.1}  h\cdot A=h\cdot N(g)=N(h)+hN(g)=N(hg)\in \lsub{b}{\CL}.\end{equation}
   However, the dot $G$-action is not order-preserving (in a similar sense) on $\CL$ in 
   general.
   On the other hand, the action of $G$ via $\L$ is by (necessarily
    order-preserving)  isomorphisms between the Boolean rings
     $\lsub{a}{\L}$,  which do not  in general preserve their 
     subsets $\lsub{a}{\CL}$. 
   
Comparability of two elements $N(x),N(z)$ in 
$\lsub{a}{\CL}$ admits several 
equivalent reformulations. In fact, 
let $x,y,z\in \mor(G)$ with    $z=xy$. Then $N(z)=N(x)+xN(y)$
and $N(z^{*})=N(y^{*})+y^{*}N(x^{*})$.
Hence  \begin{multline}\label{eq2.2.2}
N(x)\seq N(z)\iff N(x)\cap xN(y)=\eset \iff N(x^{*})\cap N(y)=\eset\\
\iff  N(y^{*})\cap y^{*}N(x^{*})=\eset\iff N(y^{*})\seq  N(z^{*}).
\end{multline}

 \subsection{Weak right preorder}\label{ss2.3} There is    
 a   preorder  $\lsub{a}{\leq  }$   
  on $\lsub{a}{G}$,
   called  the  \emph{weak right preorder} at $a$ (or  on $\lsub{a}{G}$), 
 given by $x\lsub{a}{\leq  } y$ if $N(x)\seq N(y)$.
 By definition,  there is a preorder preserving map
 \begin{equation}\label{eq2.3.1}
  x\mapsto N(x)\colon \lsub{a}{G}
\to  \lsub{a}{\CL}.\end{equation} 
 This map identifies with the component at 
 $(\lsub{a}{G},\lsub{a}{\leq  })$
 of the unit of the adjunction arising  from  existence  of a  
 left adjoint of  $\posetc\to\preordc$ (see \ref{ss1.1}).

 Define the \emph{big right weak 
preorder}  as the coproduct $\coprod_{a\in \ob{(G)}}(\lsub{a}{G},\lsub{a}
{\leq  })$ in $\preordc$. Its underlying set is taken to be 
 $\dotcup_{a\in \ob(G)}\lsub{a}{G}=\mor(G)$, and the partial order 
 on it  is denoted $\leq$. Two 
 morphisms with distinct codomains are incomparable in 
 $\leq$, and the 
 restriction of $\leq$ to $\lsub{a}{G}$ is $\lsub{a}{\leq  }$.    
 Similarly,  define the \emph{weak left preorder} $\leq_{a}$ at $a$ 
 as the preorder   of $G_{a}$  defined by $x\leq _{a}y$ if and only if  
 $x^{*}\lsub{a}{\leq  } y^{*}$, etc.

 \subsection{Properties of   weak right preorders}\label{ss2.4}
 Some  useful properties of weak right preorders of protorootoids
are listed in the next proposition.
 \begin{prop*} Let $\CR=(G,\L,N)$ be a protorootoid.
Let  $a,b,c,d\in \ob(G)$, $v\in \lrsub{a}{G}{d}$, 
$x\in\lrsub{a}{G}{b}$, $y\in \lrsub{b}{G}{c}$  and 
$w\in \lsub{b}{G}$. Then 
\begin{num}
\item $\tensor{1}{_a}\lsub{a}{\leq   }x$.
\item If $x\lsub{a}{\leq  } xy$ then $y^{*}\lsub{c}{\leq  } y^{*}x^{*}$.
\item If $x\lsub{a}{\leq  } xy$ and  $x\lsub{a}{\leq  } xw$, then
$xy\lsub{a}{\leq} xw$  if and only if $y\lsub{b}{\leq  } w$.
\item If $v^{*}\lsub{d}{\leq  }v^{* }x$, $v\lsub{a}{\leq  } xy$ 
and $y^{*}\lsub{c}{\leq  } y^{*}w$, then  
$v^{*}\lsub{d}{\leq  } v^{*}xw$.
\item If $y\lsub{b}{\leq} w$ and $w\lsub{b}{\leq}y$ then $xy\lsub{a}{\leq}xw$.
\end{num}
\end{prop*}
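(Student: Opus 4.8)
The plan is to reduce every assertion to an identity or an inclusion inside the Boolean rings $\lsub{a}{\L}$, using three ingredients: the cocycle identity $N(gh)=N(g)+gN(h)$, the elementary facts $N(1_{a})=\eset$ and $N(g^{*})=g^{*}N(g)$ recorded in \ref{ss1.13}, and above all the chain of equivalences \eqref{eq2.2.2}. The single reformulation I would extract from \eqref{eq2.2.2} and invoke repeatedly is this: for composable $p,q$ one has $p\leq pq$ in the relevant weak right preorder if and only if $N(p^{*})\cap N(q)=\eset$ (equivalently $N(p)\cap pN(q)=\eset$). Throughout I will use freely that each $\L(x)$ is an isomorphism of Boolean rings, so it preserves $\cap$, $\eset$ and the order $\seq$, and that $\cap$ distributes over the ring addition $+$.

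Parts (a) and (e) are immediate. For (a), $N(1_{a})=\eset\seq N(x)$. For (e), the two hypotheses force $N(y)=N(w)$ in $\lsub{b}{\L}$, whence $N(xy)=N(x)+xN(y)=N(x)+xN(w)=N(xw)$, giving $xy\lsub{a}{\leq}xw$. Part (b) is exactly the outermost equivalence in \eqref{eq2.2.2} applied to $z=xy$ (so $z^{*}=y^{*}x^{*}$): it reads $N(x)\seq N(xy)\iff N(y^{*})\seq N(y^{*}x^{*})$, which is the assertion.

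For (c) I would first rewrite the hypotheses $x\lsub{a}{\leq}xy$ and $x\lsub{a}{\leq}xw$, via \eqref{eq2.2.2}, as $N(x)\cap xN(y)=\eset$ and $N(x)\cap xN(w)=\eset$; thus $N(xy)=N(x)\cup xN(y)$ and $N(xw)=N(x)\cup xN(w)$ are disjoint unions. Abbreviating $P=N(x)$, $Y=xN(y)$, $W=xN(w)$ with $P\cap Y=P\cap W=\eset$, the goal $xy\lsub{a}{\leq}xw$ becomes $P\cup Y\seq P\cup W$, and I would prove the purely Boolean equivalence $P\cup Y\seq P\cup W\iff Y\seq W$: one direction is monotonicity, and for the other, from $Y\seq P\cup W$ distributivity and $P\cap Y=\eset$ give $Y=Y\cap(P\cup W)=Y\cap W$. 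Finally $Y\seq W$ means $xN(y)\seq xN(w)$, which is equivalent to $N(y)\seq N(w)$ since $\L(x)$ is an isomorphism, i.e. to $y\lsub{b}{\leq}w$.

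Part (d) is where the real work lies and is the step I expect to be the \emph{main obstacle}, since three hypotheses of different shapes must be combined. By the reformulation above, the goal $v^{*}\lsub{d}{\leq}v^{*}xw$ is equivalent to $N(v)\cap N(xw)=\eset$. Expanding $N(xw)=N(x)+xN(w)$ and distributing $\cap$ over $+$ splits this as $(N(v)\cap N(x))+(N(v)\cap xN(w))$; the first summand vanishes because the hypothesis $v^{*}\lsub{d}{\leq}v^{*}x$ gives $N(v)\cap N(x)=\eset$. It then remains to show $N(v)\cap xN(w)=\eset$. Here I would bring in the other two hypotheses: $y^{*}\lsub{c}{\leq}y^{*}w$ gives $N(y)\cap N(w)=\eset$, and applying $\L(x)$ yields $xN(y)\cap xN(w)=\eset$; while $v\lsub{a}{\leq}xy$ reads $N(v)\seq N(x)+xN(y)$, so $N(v)=N(v)\cap(N(x)+xN(y))$. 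Intersecting this identity with $xN(w)$ and distributing, the $xN(y)$-term dies by the disjointness just obtained and the $N(x)$-term dies by $N(v)\cap N(x)=\eset$, forcing $N(v)\cap xN(w)=\eset$ and hence the claim. The only delicate point is the bookkeeping of which disjointness feeds which cancellation; every manipulation is distributivity of $\cap$ over $+$ together with $\L(x)$ preserving $\cap$ and $\eset$.
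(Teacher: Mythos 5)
Your proof is correct and follows essentially the same route as the paper's: parts (a), (b), (e) are handled identically, part (c) is the same Boolean-ring equivalence (the paper states it without the distributivity details you supply), and part (d) uses exactly the paper's chain of deductions — $N(v)\cap N(x)=\eset$ from the first hypothesis, $N(v)\seq xN(y)$ (your identity $N(v)=N(v)\cap(N(x)+xN(y))$ is just this inclusion in disguise) from the second, $xN(y)\cap xN(w)=\eset$ from the third, and then the distributive computation $N(v)\cap N(xw)=(N(v)\cap N(x))+(N(v)\cap xN(w))=\eset$.
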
 
\begin{proof} Part (a) is trivial and (b) follows from \eqref{eq2.2.2}.
 Part (c) amounts to the  fact that if 
 $N(x)\cap xN(y)=\eset =N(x)\cap xN(w)$, then 
$N(x)\dotcup xN(y)\seq N(x)\dotcup xN(w)$ if and only if  
$N(y)\seq N(w)$.
For (d), the first two conditions give $N(x)\cap N(v)=\eset$ 
(by \eqref{eq2.2.2}) and
 \begin{equation*} N(v)\seq N(xy)= N(x)+ xN(y)\seq N(x)\cup xN(y),\end{equation*} so 
 $N(v)\seq xN(y)$. The third condition gives  by \eqref{eq2.2.2} 
 again that
 $N(y)\cap N(w)=\eset$, hence $xN(y)\cap xN(w)=\eset$. 
 Therefore, $N(v) 
 \cap x N(w)=\eset$. This implies that 
 \begin{equation*} N(v)\cap N(xw)=N(v)\cap (N(x)+xN
 (w))=(N(v)\cap N(x)) +(N(v)\cap xN(w))=\eset\end{equation*} which gives the
  conclusion of 
 (d) by \eqref{eq2.2.2}. The simple proof of (e) is omitted. \end{proof}
 \subsection{Faithful protorootoids}\label{ss2.5}
The next Lemma follows immediately from the 
definitions and the cocycle property. 
\begin{lem*} The following conditions $\text{\rm{(i)--(iv)}}$ on a
 protorootoid $(G,\L,N)$ are equivalent: 
 \begin{conds}
 \item  For all $a\in \ob(G)$ and $g\in \lsub{a}{G}$, 
 one has $N(g)=0$ only if $g=1_{a}$.
\item  For all $a\in \ob(G)$ and $g\in \lsub{a}{G}$, one has 
$g\lsub{a}{\leq  }\tensor{1}{_{a}}$ only if $g=1_{a}$.
\item   For all $a\in \ob(G)$ and any $g,h\in\lsub{a}{G}$  
with $N(g)=N(h)$, one has $g=h$.
\item  The weak right preorders of $\CR$ are all partial orders.
\end{conds}

\end{lem*}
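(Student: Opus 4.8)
The plan is to verify the implications in the cycle $\text{(i)}\Rightarrow\text{(iii)}\Rightarrow\text{(iv)}\Rightarrow\text{(i)}$ together with the separate equivalence $\text{(i)}\Leftrightarrow\text{(ii)}$. All of these rest only on the two recalled cocycle identities $N(1_a)=0$ and $N(g^{*})=g^{*}N(g)$ from \ref{ss1.13}, the identity $x+x=0$ valid in every Boolean ring, and the fact that $\seq$ is a genuine partial order on each $\lsub{a}{\L}$ with minimum $0$.

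First I would dispose of the two easy links. Since $N(1_a)=0$ is the minimum of $\lsub{a}{\L}$, the relation $g\lsub{a}{\leq}1_a$ says exactly $N(g)\seq 0$, i.e. $N(g)=0$; thus the hypotheses of (i) and (ii) coincide and the two conditions are the same statement. For $\text{(iv)}\Rightarrow\text{(i)}$, suppose $g\in\lsub{a}{G}$ has $N(g)=0=N(1_a)$. Then $N(g)\seq N(1_a)$ and $N(1_a)\seq N(g)$, i.e. $g\lsub{a}{\leq}1_a$ and $1_a\lsub{a}{\leq}g$, so antisymmetry of the weak right preorder, which is a partial order by (iv), forces $g=1_a$. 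Likewise $\text{(iii)}\Rightarrow\text{(iv)}$ is just antisymmetry: if $x\lsub{a}{\leq}y$ and $y\lsub{a}{\leq}x$ then $N(x)\seq N(y)$ and $N(y)\seq N(x)$, so $N(x)=N(y)$ in the poset $\lsub{a}{\L}$, whence $x=y$ by (iii).

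The one implication carrying genuine content is $\text{(i)}\Rightarrow\text{(iii)}$, and here the key step is a short cocycle computation. Given $g,h\in\lsub{a}{G}$ with $N(g)=N(h)$, write $g\colon b\to a$, so that the composite $g^{-1}h$ is defined and lies in $\lsub{b}{G}$. Using the cocycle condition \eqref{eq1.13.2} together with $N(g^{-1})=N(g^{*})=g^{*}N(g)=g^{-1}N(g)$ and the additivity of the ring homomorphism $\L(g^{-1})$, I would compute
\begin{equation*}
N(g^{-1}h)=N(g^{-1})+g^{-1}N(h)=g^{-1}\bigl(N(g)+N(h)\bigr)=g^{-1}(0)=0,
\end{equation*}
the penultimate equality using $N(g)=N(h)$ and $x+x=0$. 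Applying hypothesis (i) to $g^{-1}h\in\lsub{b}{G}$ yields $g^{-1}h=1_b$, hence $h=g$.

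I do not expect a real obstacle: every step is forced once the codomains are tracked correctly, in agreement with the remark that the lemma follows immediately from the definitions and the cocycle property. The only point requiring a little care is the bookkeeping in $\text{(i)}\Rightarrow\text{(iii)}$, where $g$ and $h$ are a priori allowed to have different domains; the conclusion $g^{-1}h=1_b$ simultaneously identifies these domains and the two morphisms, so no separate argument is needed.
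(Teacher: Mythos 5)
Your proof is correct, and it is exactly the argument the paper has in mind: the paper omits the proof with the remark that the lemma ``follows immediately from the definitions and the cocycle property,'' and your cycle of implications, with the key computation $N(g^{-1}h)=g^{-1}(N(g)+N(h))=0$ using $N(g^{*})=g^{*}N(g)$ and $x+x=0$ in a Boolean ring, is precisely that verification. The bookkeeping point you flag (that $g^{-1}h=1_b$ also forces the domains of $g$ and $h$ to coincide) is handled correctly.
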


If these conditions hold, then  $\CR$ is said to be a 
 \emph{faithful}  protorootoid. In that case,  there is  an order 
 isomorphism  $x\mapsto N(x)\colon \lsub{a}{G}\xrightarrow{\cong}
 \lsub{a}{\CL}$ and  the weak right preorder
  $\lsub{a}{\leq  }$ on
 $\lsub{a}{G}$ will be called    the \emph{weak right order} 
 (of $\CR$, or $G$, at $a$).  

   \subsection{Compatible expressions}\label{ss2.6}
     If $G$ is a category, an expression $e$ in $G$ is 
     defined to be a diagram
  \begin{equation}\label{eq2.6.1}
  a_{0}\xleftarrow{g_{1}}a_{1}\xleftarrow{}
  \cdots \xleftarrow{}a_{n-1}\xleftarrow{g_{n}}a_{n}
  \end{equation}
  of objects $a_{i}$ and morphisms $g_{i}$ of $G$, where 
  $n\in \Nat$. This expression may be denoted more compactly as 
  $e=\lrsub{a_{0}}{[g_{1},\ldots, g_{n}]}{{a_{n}}}$. 
  Its value $g$ is defined as
  $g:=g_{1}\ldots g_{n}\in \mor(G)$ if $n>0$ and as $1_{a_{0}}$ if $n=0$,
  and its length is defined to be  $n$.
    By abuse of terminology and notation, we may simply say that 
  $g_{1}\cdots g_{n}$ is an expression (with value $g$) or that
   $g=g_{1}\cdots g_{n}$ is an expression.  
  
 Now assume that $G$ is the underlying groupoid of a protorootoid 
 $\CR=(G,\L,N)$. For the above expression $e$, the cocycle condition 
 implies by induction on $n$ that
  \begin{equation}\label{eq2.6.2} N(g)=N(g_{1})+g_{1}N(g_{2})+\ldots +
  g_{1}\cdots g_{n-1}N(g_{n}).\end{equation} The expression $e$ is said to be
    \emph{compatible} if  $g_{1}\cdots g_{i-1}N(g_{i})\cap g_{1}\cdots 
    g_{j-1}N(g_{j})=\eset$ for all $i\neq j$, or equivalently,  if 
  \begin{equation}\label{eq2.6.3} N(g)=N(g_{1})\dotcup g_{1}N(g_{2})\dotcup  
  \ldots \dotcup  g_{1}\cdots g_{n-1}N(g_{n}).\end{equation}

One easily sees by induction on $n$ (see \eqref{eq2.2.2} for 
$n=2$) that
 $e$ is compatible if and only if   
 \begin{equation} \label{eq2.6.4}\eset\seq N(g_{1})\seq N(g_{1}g_{2})\seq\ldots 
 \seq  N(g_{1}\cdots g_{n})\end{equation} in $\lsub{a_{0}}{\CL}$ 
 or equivalently if and only if 
 \begin{equation}\label{eq2.6.5} 1_{a_{0}}\leq g_{1}\leq g_{1}g_{2}\leq \ldots 
 \leq g_{1}\cdots g_{n}\end{equation} in $\lsub{a_{0}}{G}$. 

 The following  \emph{substitution property}  of  compatible 
 expressions partly reduces the study of compatible expressions to the study of those of length two. Its simple proof is omitted.
\begin{lem*} Let  $e$ be an expression as above and let 
 $0=i_{0}\leq i_{1}\leq \ldots \leq i_{p}=n$ be integers. For
  $j=1,\ldots, p$, let  $e_{j}$ denote the expression
 $\lrsub{a_{i_{j-1}}}{[g_{i_{j-1}+1},\ldots, g_{i_{j}}]}{a_{i_{j}}}$ 
 and $h_{j}$ denote the value of $e_{j}$. Then $e$ is  
 compatible if and only if $e_{1},\ldots, e_{p}$ and
  $\lrsub{a_{0}}{[h_{1},\ldots, h_{p}]}{a_{n}}$ are all compatible 
  expressions.\end{lem*}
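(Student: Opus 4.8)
The plan is to reduce the statement to an elementary disjointness fact in a Boolean ring, using the reformulation of compatibility provided by \eqref{eq2.6.3}. Write $p_{k}:=g_{1}\cdots g_{k}\in \lsub{a_{0}}{G}$ for the partial products of $e$ (so $p_{0}=1_{a_{0}}$ and $p_{n}=g$), and set $T_{k}:=p_{k-1}N(g_{k})\in \lsub{a_{0}}{\L}$ for $k=1,\ldots,n$. By \eqref{eq2.6.2} one always has $N(g)=\sum_{k=1}^{n}T_{k}$, and by \eqref{eq2.6.3} the expression $e$ is compatible exactly when the $T_{k}$ are pairwise disjoint. The same description applies to $e_{j}$ and to the outer expression $\lrsub{a_{0}}{[h_{1},\ldots,h_{p}]}{a_{n}}$: writing $S_{j,l}$ for the $l$-th term of $e_{j}$ (an element of $\lsub{a_{i_{j-1}}}{\L}$) and $U_{j}:=p_{i_{j-1}}N(h_{j})$ for the $j$-th term of the outer expression, compatibility of $e_{j}$ (respectively, of the outer expression) means exactly that the $S_{j,l}$ (respectively, the $U_{j}$) are pairwise disjoint.

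First I would record two bookkeeping identities relating these families. Regrouping the partial products of $e$ by blocks gives $p_{i_{j-1}+l}=p_{i_{j-1}}\,r_{j,l}$, where $r_{j,l}:=g_{i_{j-1}+1}\cdots g_{i_{j-1}+l}$ are the partial products of $e_{j}$; substituting this into the definition of $T_{i_{j-1}+l}$ yields $T_{i_{j-1}+l}=\L(p_{i_{j-1}})(S_{j,l})$ for each admissible $l$. Since $G$ is a groupoid, $\L(p_{i_{j-1}})$ is an isomorphism of Boolean rings, so it preserves and reflects disjointness; hence the terms of $e$ lying in block $j$ are pairwise disjoint if and only if $e_{j}$ is compatible. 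Applying the same isomorphism to the identity $\sum_{l}S_{j,l}=N(h_{j})$, which is \eqref{eq2.6.2} for $e_{j}$, gives $\sum_{l}T_{i_{j-1}+l}=p_{i_{j-1}}N(h_{j})=U_{j}$; that is, the terms of $e$ in block $j$ add up (as symmetric differences in $\lsub{a_{0}}{\L}$) to the $j$-th term of the outer expression.

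It then remains to invoke an elementary fact about a Boolean ring $B$: if a finite family $(x_{j,l})$ is partitioned into blocks with block sums $y_{j}:=\sum_{l}x_{j,l}$, then the whole family is pairwise disjoint if and only if each block $(x_{j,l})_{l}$ is internally pairwise disjoint and the block sums $(y_{j})_{j}$ are pairwise disjoint. The forward direction is immediate, with $y_{j}\cap y_{j'}=\sum_{l,l'}x_{j,l}\cap x_{j',l'}=\eset$ for $j\neq j'$ by distributivity. For the converse, internal disjointness of block $j$ forces $x_{j,l}\cap y_{j}=x_{j,l}$, i.e. $x_{j,l}\seq y_{j}$, whence $x_{j,l}\cap x_{j',l'}\seq y_{j}\cap y_{j'}=\eset$ for $j\neq j'$; this ``if'' direction, which genuinely uses internal disjointness to control cross-block pairs, is the one delicate point of the argument. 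Taking $x_{j,l}=T_{i_{j-1}+l}$ and $y_{j}=U_{j}$ and combining with the previous paragraph shows that $e$ is compatible if and only if all of $e_{1},\ldots,e_{p}$ and $\lrsub{a_{0}}{[h_{1},\ldots,h_{p}]}{a_{n}}$ are compatible. Empty blocks ($i_{j-1}=i_{j}$) cause no difficulty: they contribute no terms $T_{k}$ and have $U_{j}=N(1_{a_{i_{j-1}}})=\eset$, so they are trivially compatible and disjoint from everything.
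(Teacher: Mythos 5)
Your proof is correct and complete. The paper itself omits the proof of this lemma as simple, and your argument --- recasting compatibility as pairwise disjointness of the terms $T_{k}=g_{1}\cdots g_{k-1}N(g_{k})$ via \eqref{eq2.6.3}, identifying the block-$j$ terms of $e$ with the terms of $e_{j}$ under the Boolean ring isomorphism $\L(g_{1}\cdots g_{i_{j-1}})$ and the block sums with the terms of the outer expression $\lrsub{a_{0}}{[h_{1},\ldots,h_{p}]}{a_{n}}$, then invoking the elementary fact that a partitioned family in a Boolean ring is pairwise disjoint if and only if each block is internally disjoint and the block sums are pairwise disjoint --- is precisely the straightforward verification intended, with the empty-block case also handled correctly.
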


 \begin{rem*}  Equipped with suitable face and degeneracy 
 operations, by composition and insertion of identity morphisms,
  the expressions in $G$ naturally determine  a simplicial set 
  (see \cite{MacL}) known as
   the nerve of $G$, which is involved in the construction of the 
   classifying space 
   $BG$ of the category $G$. The compatible expressions 
   constitute a simplicial
    subset of the nerve.  \end{rem*}  
 \subsection{Orthogonality of morphisms}\label{ss2.7}
 Let $\CR=(G,\L,N)$ be a protorootoid. 
  Two morphisms $g,h$ of $G$ 
 are said to be \emph{orthogonal} if they have a common 
 codomain $a$
  and  $N(g)\cap N(h)=\eset$ in $\lsub{a}{\L}$.
  
   The following trivial lemma records the way in which  each of the three concepts of 
 weak preorders, compatibility of expressions and 
 orthogonality of
  morphisms can be expressed in terms of each of the others.
  The proof is omitted.

  \begin{lem*} Let $x,y\in \mor(G)$ with $\exists xy$. Then the following conditions  are equivalent:
  \begin{conds}\item $x^{*}$ and $y$ are orthogonal morphisms.
  \item $xy$ is a  compatible expression.
  \item $x\leq xy$ in big weak preorder.
   \end{conds}\end{lem*}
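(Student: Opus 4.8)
The plan is to observe that, once each of \textrm{(i)}, \textrm{(ii)}, \textrm{(iii)} is unwound via the relevant definition, the three resulting Boolean-ring statements are precisely the outer and middle links of the chain of equivalences already recorded in \eqref{eq2.2.2}, applied with $z:=xy$. So the entire content is bookkeeping plus a citation of \eqref{eq2.2.2}; this is why the lemma is labelled trivial.

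First I would settle the codomain tracking, so that the orthogonality asserted in \textrm{(i)} is even meaningful. Writing $x\in\lrsub{a}{G}{b}$, the hypothesis $\exists xy$ forces $\cod(y)=\domn(x)=b$, while $\cod(x^{*})=\domn(x)=b$ as well; hence $x^{*}$ and $y$ share the codomain $b$, so $N(x^{*}),N(y)\in\lsub{b}{\L}$ and \textrm{(i)} says exactly $N(x^{*})\cap N(y)=\eset$. Next I would unwind \textrm{(ii)} and \textrm{(iii)}. By the definition of compatibility in \ref{ss2.6}, specialised to the length-two expression $xy$ (the only pair being $i=1$, $j=2$ in \eqref{eq2.6.3}, with empty product of morphisms before $g_{1}$ acting as the identity), condition \textrm{(ii)} reads $N(x)\cap xN(y)=\eset$. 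By the definition of weak right preorder in \ref{ss2.3}, condition \textrm{(iii)} reads $N(x)\seq N(xy)=N(z)$.

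Finally I would invoke \eqref{eq2.2.2} with $z=xy$: its chain asserts precisely that $N(x)\seq N(z)$, $N(x)\cap xN(y)=\eset$, and $N(x^{*})\cap N(y)=\eset$ are mutually equivalent. Matching these against the three unwound conditions identifies \textrm{(iii)} with the first link, \textrm{(ii)} with the second, and \textrm{(i)} with the third, so the equivalence \textrm{(i)}$\iff$\textrm{(ii)}$\iff$\textrm{(iii)} follows at once. There is no genuine obstacle beyond the codomain bookkeeping and the observation that a two-term expression has a single compatibility condition; everything else is already inside \eqref{eq2.2.2}.
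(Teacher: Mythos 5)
Your proposal is correct and is exactly the argument the paper intends: the paper omits the proof as trivial, and the required equivalences are precisely the chain recorded in \eqref{eq2.2.2} applied with $z=xy$, after unwinding the definitions of orthogonality, compatibility of a length-two expression, and the weak right preorder. The codomain bookkeeping is the only point needing any care, and you handle it correctly.
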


 \subsection{Protomeshes}\label{ss2.8} 
  Call a pair $(R,L)$ consisting 
 of a Boolean ring $R$ and a subset 
$L$ of $R$ a  \emph{protomesh}.  Regard $L$ as a poset 
with order induced by that of $R$.  A morphism 
$\theta\colon (R,L)\to (R',L')$ of  protomeshes
 is defined to be a ring 
homomorphism $\theta\colon R\to R'$ such that 
$\theta(A)\in L'$ for all $A\in L$. This defines a category of 
protomeshes, with its composition given by
 composition  of ring 
homomorphisms. 
The  morphism $\th$ of protomeshes induces a morphism
 $L\to L'$ in $\posetc$. For a protomesh $(R,L)$ and 
 $\G\in R$,  define a protomesh $(R,\G+L)$ where 
 $\G+L:=\mset{\G+\D\mid \D\in L}$. 
 
 The  protomesh $(R,L)$ is said to be \emph{standard} if $0\in L$.
 If $(R,L)$ is any protomesh, then $(R,\G+L)$ is a  standard protomesh 
 for each $\G\in L$.

 \subsection{Relation between weak orders}\label{ss2.9} The following 
 proposition describes  relationships between the  weak orders
  of a protorootoid.
 \begin{prop*} Let $\CR=(G,\L,N)$ be a protorootoid with big weak
  order $\CL$. \begin{num}\item Let $a, b\in \ob(G)$ and 
  $g\in \lrsub{b}{G}{a}$. Then $\L(g)\colon \lsub{a}{\L}\to \lsub{b}{\L}$ 
    is an isomorphism of  protomeshes    $(\lsub{a}{\L},\lsub{a}{\CL})
 \xrightarrow{\cong}(\lsub{b}{\L},\G+\lsub{b}{\CL})$ where 
 $\G:=N(g)\in \lsub{b}{\CL}$. \item For  fixed $b\in \ob(G)$,   the 
 protomesh
 $(\lsub{b}{\L},\lsub{b}{\CL})$  completely determines the family of
  isomorphism types of the    protomeshes
   $(\lsub{a}{\L},\lsub{a}{\CL})$ for   $a\in \ob(G[b])$.     \end{num}
 \end{prop*}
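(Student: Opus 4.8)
The plan is to establish (a) directly from the cocycle identity together with the groupoid structure, and then to read off (b) as an immediate consequence of (a) and the connectedness of $G[b]$. For (a), I first note that since $G$ is a groupoid and $\L$ a functor, $\L(g)\colon\lsub{a}{\L}\to\lsub{b}{\L}$ is an isomorphism of Boolean rings, with inverse $\L(g^{*})$. Hence it will suffice to show that $\L(g)$ carries the distinguished subset $\lsub{a}{\CL}$ bijectively onto $\G+\lsub{b}{\CL}$, where $\G=N(g)$; combined with the fact that $\L(g)$ is a ring isomorphism, this yields that $\L(g)$ and $\L(g^{*})$ are mutually inverse morphisms of protomeshes.

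The key computation uses the cocycle condition \eqref{eq1.13.2} and the fact that every element of a Boolean ring is its own additive inverse. For any $h\in\lsub{a}{G}$ the composite $gh$ is defined (as $\domn(g)=a=\cod(h)$) and has codomain $b$, so
\[
N(gh)=N(g)+gN(h)=\G+\L(g)(N(h)),
\]
whence $\L(g)(N(h))=\G+N(gh)$. Thus $\L(g)$ sends each $N(h)\in\lsub{a}{\CL}$ into $\G+\lsub{b}{\CL}$. It remains to observe that $h\mapsto gh$ is a bijection from $\lsub{a}{G}$ onto $\lsub{b}{G}$, with inverse $k\mapsto g^{*}k$, using that $g$ is invertible; consequently $\mset{N(gh)\mid h\in\lsub{a}{G}}=\lsub{b}{\CL}$, and the displayed formula shows $\L(g)(\lsub{a}{\CL})=\G+\lsub{b}{\CL}$. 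Injectivity of $\L(g)$ then makes this restriction a bijection, completing (a).

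For (b), fix $b$ and take $a\in\ob(G[b])$. Since $G[b]$ is connected there exists some $g\in\lrsub{b}{G}{a}$, and part (a) gives an isomorphism of protomeshes $(\lsub{a}{\L},\lsub{a}{\CL})\cong(\lsub{b}{\L},\G+\lsub{b}{\CL})$ with $\G=N(g)\in\lsub{b}{\CL}$. The target protomesh is built from the single datum $(\lsub{b}{\L},\lsub{b}{\CL})$ by translating its distinguished subset by an element $\G$ of that same subset, an operation internal to that protomesh; so its isomorphism type is determined by $(\lsub{b}{\L},\lsub{b}{\CL})$ alone. Hence every $(\lsub{a}{\L},\lsub{a}{\CL})$ is isomorphic to one of the protomeshes $(\lsub{b}{\L},\G+\lsub{b}{\CL})$ with $\G\in\lsub{b}{\CL}$, which is exactly the assertion.

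As to difficulty, there is essentially no obstacle: the whole argument is a one-line consequence of the cocycle identity, once the orientation conventions are tracked (with $g\colon a\to b$, so that $gh$ is defined for $h\in\lsub{a}{G}$ and $h\mapsto gh$ maps $\lsub{a}{G}$ onto $\lsub{b}{G}$) and the triviality of additive inverses in a Boolean ring is used. The only point meriting a moment's care is that the correct image is the \emph{translate} $\G+\lsub{b}{\CL}$ rather than $\lsub{b}{\CL}$ itself; this inhomogeneity is precisely the contribution of the cocycle term $N(g)$, and is what prevents the weak orders at different objects from being literally identified under $\L(g)$.
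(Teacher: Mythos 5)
For part (a), your argument is exactly the paper's: you rewrite the cocycle identity as $\L(g)(N(h))=\G+N(gh)$ and combine it with the bijection $h\mapsto gh\colon \lsub{a}{G}\to\lsub{b}{G}$ to get $\L(g)(\lsub{a}{\CL})=\G+\lsub{b}{\CL}$, the ring isomorphism $\L(g)$ and its inverse $\L(g^{*})$ then being mutually inverse morphisms of protomeshes. No issues there.

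In part (b), however, your final inference has a small but genuine incompleteness. What you actually establish is that each $(\lsub{a}{\L},\lsub{a}{\CL})$, $a\in\ob(G[b])$, is isomorphic to \emph{some} translate $(\lsub{b}{\L},\G+\lsub{b}{\CL})$ with $\G\in\lsub{b}{\CL}$. That only says the family of isomorphism types is \emph{contained in} a set constructed from $(\lsub{b}{\L},\lsub{b}{\CL})$; containment in a determined set is not determination, since, as far as this inclusion is concerned, two protorootoids with isomorphic protomeshes at their base objects could realize different subsets of the translates. The missing half is the reverse inclusion, which also follows at once from (a): given $\G\in\lsub{b}{\CL}$, write $\G=N(g)$ for some $g\in\lsub{b}{G}$ and set $a:=\domn(g)\in\ob(G[b])$; then (a) exhibits $(\lsub{a}{\L},\lsub{a}{\CL})\cong(\lsub{b}{\L},\G+\lsub{b}{\CL})$. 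With both inclusions, the family of isomorphism types is precisely that of the translates $(\lsub{b}{\L},\G+\lsub{b}{\CL})$ for $\G\in\lsub{b}{\CL}$, which is how the paper phrases its proof (the two collections of isomorphism types ``coincide''), and this is what justifies ``completely determines.''
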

 \begin{proof} 
 For all $x\in \lsub{a}{G}$,
 $(\L(g))(N_{x})=N(g)+N(gx)=\G+N(gx)$. Since  the map 
 $x\mapsto gx\colon \lsub{a}{G}\to \lsub{b}{G}$ is bijective,
 $\lsub{b}{\CL}=\mset{N(gx)\mid x\in \lsub{a}{G}}$ and (a) follows.
Then (b)  holds since   the isomorphism types of     protomeshes
   $(\lsub{a}{\L},\lsub{a}{\CL})$ for   $a\in \ob(G[b])$ coincide with the 
   isomorphism types  of protomeshes 
   $ (\lsub{b}{\L},\G+\lsub{b}{\CL})$ for
 $\G\in \lsub{b}{\CL}$, by (a).
  \end{proof}

  \subsection{Categories of protorootoids}\label{ss2.10}
    By a \emph{morphism  of protorootoids}, we shall mean a morphism in the category $\prootc$ defined below, unless otherwise specified.  \begin{defn*} The category $\prootc$ is the category with  protorootoids as objects  and in which a  morphism
 $(G,\L,N)\to (G',\L',N')$ is  a pair $(\a,\mu)$ such that
 \begin{conds}\item $\a\colon G\to G'$ is a groupoid homomorphism i.e. a functor.
 \item $\mu\colon \L\to \L'\a$ is a natural transformation (between functors $G\to \bringc $).
 \item  $\mu N=N'\a$ i.e. for all $a\in \ob(G)$ and $g\in \lsub{a}{G}$, $\mu_{a}(N_{g})=N'_{\a(g)}$ in $\L'(\a(a))$. \end{conds}
 For another morphism  $(\a',\mu')\colon (G',\L',N')\to (G'',\L'',N'')$,
 the composite morphism $(\a',\mu')(\a,\mu)\colon (G,\L,N)\to (G'',\L'',N'')$   is defined to be  the  pair $(\a'\a,(\mu'\a)\mu)$ where the composite natural transformation $(\mu'\a)\mu\colon \L\to \L''\a'\a$ has component at $a\in \ob(G)$ given by $((\mu'\a)\mu)_{a}:=\mu'_{\a(a)}\mu_{a}$.
   \end{defn*}
   
 The full subcategory of $\prootc$ consisting of faithful 
protorootoids is denoted $\fprc$.
  For a fixed groupoid $G$, the category $G\text{\rm -}\prootc$ of  \emph{$G$-protorootoids} is defined as   the subcategory of  $\prootc$ with only those objects  of the form 
 $(G,\L,N)$  and with only those morphisms of the form $(\Id_{G},\mu)$. 
 The category $\prootc_{1}$ of \emph{unitary protorootoids} is the  (not full) subcategory of $\prootc$ with unitary protorootoids as objects and morphisms $(\a,\nu)$ in $\prootc$ between unitary protorootoids such that each component of $\nu$ is a morphism  
 in $\balgc$ (i.e. the components  are \emph{unital} ring homomorphisms).  

\begin{lem*}  For any  morphism $(\alpha,\mu)\colon (G,\L,N)\to(G',\L',N')$ of protorootoids, the map induced by $\a$ on morphisms  restricts for each $a\in \ob(G)$ to   a weak preorder  preserving map $\lsub{a}{\a}\colon (\lsub{a}{G},\lsub{a}{\leq  })\to (\lsub{a'}{G}',\lsub{a'}{\leq  })$ where $a':=\a(a)$.
Further,  $\mu_{a}\colon (\lsub{a}{\L},\lsub{a}{\CL})\to (\lsub{\a(a)}{\L}',\lsub{\a(a)}{\CL}')$ is a morphism of protomeshes,
where $\CL'$ is the big weak order of $(G',\L',N')$.  \end{lem*}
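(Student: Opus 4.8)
The plan is to read off both assertions directly from the definition of a morphism of protorootoids in \ref{ss2.10}, the only substantive input being the elementary fact that a homomorphism of Boolean rings automatically preserves the order $\seq$. Two features of $(\a,\mu)$ will be used repeatedly: condition (iii), namely $\mu N = N'\a$, which by \ref{ss1.13} unwinds to $\mu_{a}(N(g)) = N'(\a(g))$ for every $a\in \ob(G)$ and $g\in \lsub{a}{G}$; and the fact that, since $\mu\colon \L\to \L'\a$ is a natural transformation of functors valued in $\bringc$, each component $\mu_{a}\colon \lsub{a}{\L}\to \lsub{\a(a)}{\L}'$ is a ring homomorphism. Because the order on a Boolean ring is defined multiplicatively ($A\seq B$ iff $A\cap B = A$), any such $\mu_{a}$ is order preserving.

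First I would check that $\a$ does restrict to a map $\lsub{a}{\a}\colon \lsub{a}{G}\to \lsub{a'}{G}'$ with $a'=\a(a)$: if $x\in \lsub{a}{G}$ then $\cod(x)=a$, whence $\cod(\a(x))=\a(a)=a'$ and so $\a(x)\in \lsub{a'}{G}'$. For preorder preservation, suppose $x\lsub{a}{\leq}y$, i.e.\ $N(x)\seq N(y)$ in $\lsub{a}{\L}$. Applying the order-preserving homomorphism $\mu_{a}$ and invoking (iii) gives $N'(\a(x)) = \mu_{a}(N(x)) \seq \mu_{a}(N(y)) = N'(\a(y))$ in $\lsub{a'}{\L}'$, which is exactly $\a(x)\lsub{a'}{\leq}\a(y)$. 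This settles the first assertion.

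For the second, $\mu_{a}$ is already a ring homomorphism $\lsub{a}{\L}\to \lsub{\a(a)}{\L}'$, so by the definition of protomesh morphism in \ref{ss2.8} it remains only to verify that $\mu_{a}$ carries $\lsub{a}{\CL}$ into $\lsub{\a(a)}{\CL}'$. An arbitrary element of $\lsub{a}{\CL}$ is $N(g)$ for some $g\in \lsub{a}{G}$, and (iii) computes its image as $\mu_{a}(N(g)) = N'(\a(g))$; since $\a(g)\in \lsub{\a(a)}{G}'$, this lies in $\lsub{\a(a)}{\CL}'=\mset{N'(h)\mid h\in \lsub{\a(a)}{G}'}$ by definition. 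Hence $\mu_{a}$ is a morphism of protomeshes. I do not anticipate any genuine obstacle: the statement is a formal unwinding of the axioms, and the sole conceptual point---which is immediate---is that the Boolean-ring order is preserved by ring homomorphisms precisely because it is encoded by multiplication.
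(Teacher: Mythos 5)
Your proof is correct and follows exactly the paper's own argument: both use condition (iii) of the morphism definition together with the fact that a Boolean ring homomorphism is automatically order preserving to get $N'(\a(x))=\mu_{a}(N(x))\seq \mu_{a}(N(y))=N'(\a(y))$. The only difference is that you spell out the codomain check and the protomesh condition explicitly, which the paper compresses into ``Both assertions follow.''
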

\begin{proof} Let $g_{1},g_{2}\in \lsub{a}{G}$ with $g_{1}
\lsub{a}{\leq  } g_{2}$ i.e. $N(g_{1})\seq N(g_{2})$.
Since $\mu_{a}\colon \lsub{a}{\L}\to \lsub{a'}{\L}'$ is a homomorphism of Boolean rings, it is order preserving, and  the definitions give \begin{equation*} N'(\a(g_{1}))=\mu_{a}(N(g_{1}))\seq \mu_{a}( N(g_{2}))=N'(\a(g_{2}))\end{equation*} i.e. $\a(g_{1})
\lsub{a'}{\leq  } \a(g_{2})$. Both assertions follow.\end{proof}
  \subsection{Inverse image}\label{ss2.11}
Let $\CR=(G,\L,N)$ be a protorootoid 
and  $i\colon H\to G$ be  a 
groupoid morphism. Define the \emph{inverse image protorootoid}  
$i^{\nat}\CR:=(H,\L i, Ni)$. 
There is a morphism $i^{\flat}=(i,\Id_{\L i}) \colon  i^{\nat}\CR \to \CR$ 
in $\prootc$.

Note that $i^{\nat}$ becomes a functor $i^{\nat}\colon 
G\text{\rm -}\prootc\to H\text{\rm -}\prootc$ if one defines 
$i^{\nat}(\Id_{G},\nu)=(\Id_{H},\nu i)$
for any morphism $(\Id_{G},\nu)$ of $G$-protorootoids.

Also, $i^{\flat}$ has the following universal property:
given a  protorootoid $\CR'=(H,\L',N')$ and a morphism
$f=(i,\mu)\colon \CR'\to \CR$ in $\prootc$, there is a unique morphism 
$g\colon\CR'\to i^{\nat}(\CR)$ in $H\text{\rm -}\prootc $ such that 
$f=i^{\flat}g$ in $\prootc$ (namely, $g=(\Id_{H},\mu)$). 

 \subsection{Restriction}\label{ss2.12} If 
$i\colon H\to G$ is the inclusion morphism of a 
subgroupoid $H$  into $G$, 
then $i^{\nat }\CR$ is called the   \emph{restriction}  of $\CR$ to $H$ 
and is denoted sometimes as $\CR_{H}:=i^{\nat }\CR$.  

\subsection{Coverings} \label{ss2.13}   A morphism 
$f=(\a,\nu)\colon \CR'\to \CR$ in $\prootc$ is said to be  a \emph{covering
 morphism} or \emph{covering} if  $\a$ is a covering morphism of
  groupoids and $\nu$ is a natural isomorphism. Equivalently, $f$ is a 
  covering if 
  $\a$ is a covering of groupoids and the natural morphism
  $\CR'\to \a^{\nat}(\CR)$ is an isomorphism in $\prootc$.
  In that case,  $\CR'$ is called a covering protorootoid  or
   \emph{covering} of $\CR$. If the groupoid morphism $\a$ is a covering quotient morphism, then  $\CR$ is called a 
   \emph{covering quotient} of $\CR'$ and $f$ is called a \emph{covering quotient morphism}.  
  A \emph{universal covering} of a protorootoid $\CR$ is  a covering
  $(\a,\nu)\colon \CR'\to \CR$ such that $\a$ is a universal covering in 
  $\grpdc$ of 
  the underlying groupoid of $\CR$. Such a universal covering exists
   and it  is unique up to isomorphism as an object of $\prootc/\CR$.

\subsection{}\label{ss2.14}   Another  category $\prootcp$ of
 protorootoids, which will only be considered occasionally,  is defined
  as in \ref{ss2.10}(i)--(iii) but  taking 
  $\mu\colon  \L'\a\to \L$ in (ii) and replacing (iii) by the following: 
$N=\mu N'\a$ i.e.  for $a\in \ob(G)$ and   $g\in \lsub{a}{G}$,  one has 
 $N_{g}=\mu_{a}(N'_{\a(g)})$.
 For another morphism  $(\a',\mu')\colon (G',\L',N')\to (G'',\L'',N'')$,
 the composite $(\a',\mu')(\a,\mu)\colon (G,\L,N)\to (G'',\L'',N'')$  in
  $\prootcp$  is  the  pair $(\a'\a,\mu(\mu'\a))$ where
    $\mu(\mu'\a)\colon  \L''\a'\a\to \L$ has component at 
    $a\in \ob(G)$ given 
    by $(\mu(\mu'\a))_{a}:=\mu_{a}\mu'_{\a(a)}$.  
    
    Define $G\text{\rm -}\prootcp$ from $\prootcp$ in a similar way as 
    $G\text{\rm -}\prootc$ is defined from $\prootc$.

\subsection{Groupoid-preorders} \label{ss2.15}
The  category $\gpdpreord$ of groupoid-preorders is defined as follows.  It has as 
objects groupoids  $G$
such that for each $a\in \ob(G)$, there is a given 
preorder  $\lsub{a}{\leq  }$ 
on $\lsub{a}{G}$, called the \emph{weak right preorder} of $G$ at $a$. 
A morphism $G\to H$ in $\gpdpreord$ is a groupoid 
homomorphism $\th\colon H\to G$ such that for each $a\in \ob(H)$, the 
restriction $\lsub{a}{\th}$ of $\th$ to a function 
$\lsub{a}{G}\to \lsub{\th(a)}{H}$ 
is a morphism in $\preordc$. Composition in $\gpdpreord$ is given by
 composition of underlying groupoid morphisms.     The full subcategory of 
$\gpdpreord$ consisting of groupoids for 
which all the weak right preorders are 
partial orders is called the category of groupoid-orders.

Formally, a groupoid-preorder is  a pair $(G,\leq)$
 consisting of a groupoid $G$ and a preorder $\leq$, which is  called the \emph{big weak right 
  preorder}, on $\mor(G)=\dotcup_{a\in \ob(G)}\lsub{a}{G}$ such that 
  $\leq$ restricts to the weak right preorder of $\lsub{a}{G}$, and
   elements in   different stars $\lsub{a}{G}$ of $G$ are incomparable.

There is a natural  forgetful functor 
$\mathfrak{P}\colon \prootc\to  \gpdpreord$
which on objects takes a protorootoid $\CR=(G,\L,N)$ to the 
groupoid $G$ 
endowed with the collection of  weak right preorders of 
$\CR$, and which takes a 
morphism of protorootoids to the underlying morphism of 
groupoids (which is 
a morphism in $\gpdpreord$ by Lemma \ref{ss2.10}). 
The full subcategory of $\gpdpreord$  with objects the groupoid-preorders which are isomorphic to
$\mathfrak{P}(\CR)$ for some protorootoid $\CR$ is 
denoted $\gpdpreord_{P}$ and called the category of 
\emph{protorootoidal groupoid-preorders}. In this definition,
``isomorphic'' could be replaced by ``equal'' 
since if $i\colon (G,\leq)\to \mathfrak{P}(\CR)$ is an isomorphism in $\gpdpreord$, then (regarding  $i$ just  as a morphism of groupoids)  $\mathfrak{P}(i^{\nat}(\CR))=(G,\leq)$.

%
 \subsection{Order isomorphism}\label{ss2.16} A 
 \emph{preorder isomorphism} from a  protorootoid  $\CR$ to a
  protorootoid $\CT$ is by definition  an isomorphism 
  $\mathfrak{P}(\CR)\to \mathfrak{P}(\CT)$ in $\gpdpreord$ i.e. 
 an isomorphism $\theta\colon G\to H$ from the underlying 
 groupoid of $\CR$ to that of $\CT$ such that the induced 
maps $\lsub{a}{\th}\colon \lsub{a}{G}\to \lsub{\theta(a)}(H)$ for $a\in \ob(G)$ are all preorder 
isomorphisms in the corresponding right weak preorders.  Similarly,
 one defines \emph{order isomorphisms} of faithful protorootoids. 

\begin{rem*} (1) Many, though not all, properties of 
(and definitions concerning)
protorootoids $\CR$  may be expressed completely
in terms of $\mathfrak{P}(\CR)$. For example, faithfulness of a 
protorootoid is such a property, and the simplicial set of compatible expressions depends up to isomorphism  only on the preorder  isomorphism type. 

(2) Subsequent papers will give characterizations of protorootoidal groupoid-preorders and show how an analogue of  
 part of the theory of protorootoids and rootoids may be developed in the context of  $\gpdpreord_{P}$.  \end{rem*}
  \section{Principal protorootoids}\label{s3}
 \subsection{Groupoid generators}\label{ss3.1} Let $G$ be a 
 groupoid and  $S\seq \mor(G)$. The subgroupoid $H$ of $G$ generated by $S$ is defined to the subgroupoid of $G$ containing all identity morphisms of $G$ and all  morphisms $g$ of $G$  which are  expressible
   as a product
 $s_{1}\cdots s_{n}$ with $s_{i}\in S\cup S^{*}$.  One says more briefly that $S$ \emph{generates} $H$.
   If $S$ generates $G$, a corresponding 
 \emph{length function} $l_{S}\colon \mor(G)\to \Nat$ is  defined by
 \begin{equation}\label{ss3.1.1} l_{S}(g):=\min(\mset{n\in \Nat\mid g=s_{1}\cdots s_{n}, s_{i}\in S\cup S^{*}})\end{equation} if $g$ is not an identity morphism, and $l_{S}(g):=0$ if $g$ is an identity morphism.

  \subsection{Rank in Boolean rings}\label{ss3.2} A finite Boolean ring $B$ is a Boolean algebra since the join of all
 (finitely many) of its elements is a maximal element of $B$ and hence an identity element of $B$. 
 Recall that a finite Boolean algebra $B$ is isomorphic to the Boolean algebra of  subsets of a finite set of uniquely determined cardinality $\rank(B)$ (equal to the number of atoms of $B$).
 
 Now  let $B$ denote an arbitrary Boolean ring.  For $x\in B$, the principal ideal  generated by $x$ is \begin{equation*} xB=x\cap B=\mset{x\cap y\mid y\in B}=\mset{x'\in B\mid x'\leq x},\end{equation*}  which, regarded as a subring of $B$, is itself a Boolean ring. If $xB$ is finite, say that $x$ is of finite \emph{rank} $\rank(x):=\rank(xB)$. The atoms of $B$ are its elements of rank $1$.
 
 Though the following is well known, a proof is given for completeness.
 \begin{lem*} Let $B$ be a Boolean ring, and $U$ be the set of atoms of $B$.  Let $\wp'(U)$ be the (possibly non-unital) subring of $\wp(U)$ with the finite subsets of $U$ as its elements.
 \begin{num}\item 
If $x,y\in B$ are of finite rank, then so are $x\cup y$ and $x\cap y$, and $\rank(x)+\rank(y)=\rank(x\cap y)+\rank(x\cup y)$.
 \item Let $B'$ be the subring (also an ideal and  order ideal) of $B$ consisting of elements of finite rank. Then the map $\th\colon x\mapsto \mset{y\leq x\mid \rank(y)=1}$ defines an isomorphism of Boolean rings
 $\th \colon B'\to \wp'(U)$.
 \item If $x,y\in B'$, then $\th(x\cup y)+\th(x\cap y)=\th(x)+\th(y)$. \end{num}
 \end{lem*}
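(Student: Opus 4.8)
The plan is to reduce everything to counting and comparing the atoms of $B$ lying below a given element. For $x\in B$ write $A(x):=\mset{u\in U\mid u\seq x}$; since the rank-one elements are precisely the atoms, the map in (b) is just $\th(x)=A(x)$. First I would record the basic dictionary: for $x$ of finite rank the principal ideal $xB=\mset{z\in B\mid z\seq x}$ is a finite Boolean algebra with identity $x$, its atoms are exactly the elements of $A(x)$ (being downward closed in $B$, an atom of $xB$ is an atom of $B$), and hence $\rank(x)=\rank(xB)=\vert A(x)\vert$ by the count of atoms in a finite Boolean algebra recalled in \ref{ss3.2}. Moreover, for any atom $u$ one computes $u\cap(x\cup y)=(u\cap x)\cup(u\cap y)$ and $u\cap(x+y)=(u\cap x)+(u\cap y)$, and since $u\cap x,u\cap y\in\mset{0,u}$ this gives at once
\begin{equation*}
A(x\cup y)=A(x)\cup A(y),\quad A(x\cap y)=A(x)\cap A(y),\quad A(x+y)=A(x)+A(y),
\end{equation*}
the last being symmetric difference.

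For (a), finiteness of $x\cap y$ is immediate because $(x\cap y)B\seq xB$ is a subset of a finite set; this also shows $B'$ is an order ideal. The crux --- the one step that is not purely formal --- is finiteness of $x\cup y$: having only finitely many atoms below $x\cup y$ does not by itself force $(x\cup y)B$ finite, since a Boolean ring need not be atomic. The device I would use is the injective ring homomorphism
\begin{equation*}
\varphi\colon (x\cup y)B\to xB\times yB,\qquad w\mapsto (w\cap x,\,w\cap y),
\end{equation*}
which is injective because $w\cap x=w\cap y=0$ forces $w=w\cap(x\cup y)=(w\cap x)\cup(w\cap y)=0$. As $xB\times yB$ is finite, so is $(x\cup y)B$, whence $x\cup y$ has finite rank. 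The rank identity then follows from $\rank(w)=\vert A(w)\vert$ together with the atom identities above and the inclusion--exclusion $\vert A(x)\vert+\vert A(y)\vert=\vert A(x)\cup A(y)\vert+\vert A(x)\cap A(y)\vert$ for finite sets.

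For (b), well-definedness of $\th=A(-)$ into $\wp'(U)$ is the finiteness of $A(x)$ just obtained, and the three displayed atom identities show directly that $\th$ preserves $\cap$ and $+$, hence is a homomorphism of Boolean rings (for the additive part one also notes $x+y\seq x\cup y$, so $x+y\in B'$). Injectivity comes from atomicity of the finite Boolean algebra $xB$: its top element $x$ equals the join of all its atoms, so $A(x)=A(y)$ forces $x=y$. For surjectivity, given a finite set $\mset{u_{1},\ldots,u_{n}}\seq U$ I would take $x:=u_{1}\cup\cdots\cup u_{n}$, which lies in $B'$ by (a) and satisfies $A(x)=\mset{u_{1},\ldots,u_{n}}$ since $A(u_{i})=\mset{u_{i}}$. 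Thus $\th$ is a bijective ring homomorphism, i.e.\ an isomorphism.

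Finally (c) is formal given (b): applying the homomorphism $\th$ to $x\cup y=x+y+(x\cap y)$ gives $\th(x\cup y)=\th(x)+\th(y)+\th(x\cap y)$, and adding $\th(x\cap y)$ to both sides kills the last term (as $v+v=0$ in $\wp'(U)$), yielding $\th(x\cup y)+\th(x\cap y)=\th(x)+\th(y)$. The only genuine obstacle in the whole argument is the finiteness of $(x\cup y)B$ in (a); everything else is bookkeeping with atoms once $\rank(x)=\vert A(x)\vert$ is in place.
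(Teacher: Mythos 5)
Your proof is correct, but it is organized differently from the paper's. The paper never counts atoms directly: its engine is the observation that orthogonal idempotents $u,v$ of finite rank give a ring isomorphism $(u+v)B\cong uB\times vB$, whence $\rank(u+v)=\rank(u)+\rank(v)$ and $\th(u+v)=\th(u)\dotcup\th(v)$; applying this to the orthogonal decompositions $x=(x+(x\cap y))\dotcup(x\cap y)$ and $x\cup y=(x+(x\cap y))\dotcup y$ yields (a), (c) and preservation of $\cup$ and $\cap$ by $\th$, after which additivity of $\th$ is extracted by a separate computation from $x\cup y=(x+y)\dotcup(x\cap y)$, and bijectivity by exhibiting the inverse $X\mapsto\cup_{u\in X}u$. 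You instead reduce everything to the atom sets $A(x)$: the identities $A(x\cup y)=A(x)\cup A(y)$, $A(x\cap y)=A(x)\cap A(y)$, $A(x+y)=A(x)+A(y)$ make the homomorphism property of $\th$ immediate (including additivity, which costs the paper its only delicate computation), and the rank identity in (a) is just inclusion--exclusion for finite sets. For the one genuinely non-formal point, finiteness of $\rank(x\cup y)$, you embed $(x\cup y)B$ into $xB\times yB$ via $w\mapsto(w\cap x,w\cap y)$, whereas the paper obtains the same finiteness from its isomorphism applied after decomposing $x\cup y$ orthogonally; the two devices are close cousins (your injection becomes the paper's isomorphism when $x\cap y=\eset$), but yours needs no preliminary decomposition. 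What the paper's route buys is that atoms stay implicit until the end; what yours buys is that (b) and (c) become transparent bookkeeping once $\rank(x)=\vert A(x)\vert$ is in place.
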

\begin{proof} If  $x,y\in B$ satisfy $x\cap y=\eset$,
then $x$ and $y$ are orthogonal idempotents so
$(x+y)B\cong xB\times yB$ as ring. If $x$ and $y$ are also of finite  rank, this implies that $\rank(x+y)=\rank(x)+\rank(y)$, so $x+y$ is of finite rank, and $\th(x+y)=\theta(x)\dotcup \th(y)$.
 Now let $x,y\in B$ be arbitrary elements of  finite rank.
 Since $x\cap y$ and $x+(x\cap y)$ are orthogonal  idempotents
  (of finite rank since they are in $[\eset,x]_{B}$),
  one has \begin{equation*} \rank(x+(x\cap y))+\rank(x\cap y)=\rank(x)\end{equation*}
  and $\th(x+(x\cap y))\dotcup \th(x\cap y)=\th(x)$.
  But $y$ and $x+(x\cap y)$ are also orthogonal, with sum $x\cup y$, so  \begin{equation*} \rank(x+(x\cap y))+\rank(y)=\rank(x\cup y)\end{equation*}
  and $\th(x+(x\cap y))\dotcup \th( y)=\th(x\cup y)$. These formulae easily imply  (a), (c) and that $\theta(x\cup y)=\theta(x)\cup 
  \th(y)$. It is also clear by the definition of $\th$ that $\th(x\cap y)=\th(x)\cap \th(y)$. Using  $x\cup y=(x+y)\dotcup (x\cap y)$ and an analogous fact in $\wp'(U)$, it follows that
  \begin{multline*}(\th(x)+\th(y))\dotcup (\th(x)\cap \th(y))=
  \th(x)\cup \th(y)\\=\th(x\cup y)=\th(x+y)\dotcup \th(x\cap y)=
  \th(x+y)\dotcup (\th(x)\cap \th(y))\end{multline*} from which
  $\th(x+y)=\th(x)+\th(y)$. Hence $\th\colon B'\to\wp'(U)$ is a ring homomorphism.
  One readily checks that an inverse function  $\th^{-1}\colon \wp'(U)\to B'$ is given by $X\mapsto \cup_{x\in X} x$ (the join in $B$ of $X$)  for finite $X\seq U$.    \end{proof} 
 
  \subsection{Terminology for protorootoids}\label{ss3.3}
The following definition collects the   basic terminology used in these papers for protorootoids.  Complete and  principal protorootoids are the two most important classes; others are  technically useful in relation to them  or  in formulating  results in their natural generality.  \begin{defn*} Let $\CR= (G,\L,N)$ be a protorootoid with  big weak order $\CL$. \begin{num}
  \item $\CR$ is said to be \emph{connected} (resp., \emph{simply connected}) if its underlying groupoid  $G$ is connected (resp., simply connected).
   \item $\CR$ is \emph{complemented} if it is unitary    and  for each $a\in\ob(G)$ and $A\in \lsub{a}{\CL}$, one has $A^{\cp}:=\lrsub{}{1}{\L(a)}+A\in \lsub{a}{\CL}$.
\item   $\CR$  is  \emph{complete} if for each $a\in \lsub{a}{G}$, $\lsub{a}{\CL}$ is a complete lattice.
\item  $\CR$ is 
\emph{interval finite} if  for each $a\in \ob(G)$ and each morphism $g\in \lsub{a}{G}$, the interval
$[\eset,N(g)]_{\lsub{a}{\CL}}:=\mset{A\in \lsub{a}{\CL}\mid  A\seq N(g)}$ in  $\lsub{a}{\CL}$ is finite. 
\item $\CR$ is \emph{cocycle finite} if  for each $a\in \ob(G)$ and each morphism $g\in \lsub{a}{G}$, the element $N(g)$ is of finite rank in $\lsub{a}{\L}$.  In that case,  define a function $l_{N}\colon \mor(G)\to \Nat$ by  $l_{N}(g):=\rank(N(g))$.
\item An element $s\in \mor(G)$, say $s\in \lrsub{a}{G}{b}$, is  an  \emph{atomic morphism}  of $\CR$ (or $G$) if $N(s)$ is an  atom of  the  weak order  $\lsub{a}{\CL}$. Let $A_{\CR}$ denote the set of atomic morphisms of $\CR$.
\item An element $s\in \mor(G)$, say $s\in \lrsub{a}{G}{b}$, is  a \emph{simple morphism}  of $\CR$ (or of $G$) if $N(s)$ is an  atom of  the  Boolean ring  $\lsub{a}{\L}$. Let $S_{\CR}$ denote the set of simple morphisms of $\CR$.
\item $\CR$ is  \emph{atomically generated} (resp., \emph{simply generated}) if $A_{\CR}$ (resp., $S_{\CR}$) generates $G$.
\item $\CR$ is  \emph{principal} if it is cocycle finite, simply generated  and $l_{S}=l_{N}\colon \mor(G)\to \Nat$ where $S:= S_{\CR}$.
\item  $\CR$ is  \emph{preprincipal} if it is faithful and interval finite, and for all $a\in \ob(G)$, $g\in \lsub{a}{G}$ and $s\in \lsub{a}{A}$ (where $A:=A_{\CR}$), either $N(g)\cap N(s)=\eset$ or $N(s)\seq N(g)$ (i.e. either $s^{*}g$ or $sh$ is a compatible expression, where $h:=s^{*}g$). 
\item $\CR$ is \emph{abridged} if for each $a\in \ob(G)$, $\lsub{a}{\L}$ is generated as Boolean ring by $\lsub{a}{\CL}$.
\item $\CR$ is \emph{saturated} if for every $a\in \ob(G)$ and every $g\in\lsub{a}{G}$,
every maximal totally ordered subset of $[\eset,N(g)]_{\,\lsub{a}{\CL}}$ is also a maximal totally ordered subset of $[\eset,N(g)]_{\lsub{a}{\L}}$.
\item $\CR$ is \emph{pseudoprincipal} if for every $a\in \ob(G)$ and $h,g\in \lsub{a}{G}$ with  $N(h)\neq \eset$, there exists $x\in \lsub{a}{G}$ with $\eset\neq N(x)\seq N(h)$ and either $N(x)\seq N(g)$ or $N(x)\cap N(g)=\eset$.
\item  $\CR$ is \emph{regular} if for every $a\in \ob(G)$ and every
non-empty directed subset $X$ of $\lsub{a}{\CL}$ with a join $x$ in $\lsub{a}{\CL}$, $x$ is also the join of $X$ as a subset of $\lsub{a}{\L}$.
\end{num}
\end{defn*}

 The set $S$ of simple morphisms of a simply generated protorootoid   $\CR=(G,\L,N)$ is  
  called the set of \emph{simple generators} of $\CR$ (or less precisely, of $G$). Note that  $S_{\CR}$ and $A_{\CR}$ may  be empty for an arbitrary protorootoid $\CR$. 
  
  The property of being connected (resp., simply connected,  complete, interval finite, atomically generated, preprincipal, pseudoprincipal) depends only on the preorder isomorphism type of the protorootoid

 \subsection{}\label{ss3.4} Basic properties of atomic and simple morphisms are listed below.\begin{lem*} Let $\CR=(G,\L,N)$ be a protorootoid. 
Set $A:=A_{\CR}$ and $S:=S_{\CR}$. Then:
\begin{num}
\item$A=A^{*}$ and  $S=S^{*}$. 
\item $S\seq A$.
\item  $A$ (and therefore $S$)  contains no identity morphism of $G$.
\end{num}
 \end{lem*}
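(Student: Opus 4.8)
The plan is to read off (b) and (c) directly from the definitions and to isolate the inversion-invariance in (a) as the only place needing an argument. For (c), note $N(1_{a})=\eset$ is the minimum of both $\lsub{a}{\L}$ and $\lsub{a}{\CL}$, hence an atom of neither; so no identity morphism is atomic, and therefore (by (b)) none is simple. For (b), if $s\in\lrsub{a}{G}{b}$ is simple then $[\eset,N(s)]_{\lsub{a}{\L}}=\set{\eset,N(s)}$; since $\lsub{a}{\CL}\seq\lsub{a}{\L}$ contains both $\eset=N(1_{a})$ and $N(s)$, the interval $[\eset,N(s)]_{\lsub{a}{\CL}}$ is squeezed between them and equals $\set{\eset,N(s)}$, so $N(s)$ is an atom of $\lsub{a}{\CL}$ and $s$ is atomic.

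The simple half of (a) is equally quick: for $s\in\lrsub{a}{G}{b}$ the map $\L(s^{*})\colon\lsub{a}{\L}\to\lsub{b}{\L}$ is an isomorphism of Boolean rings carrying $N(s)$ to $N(s^{*})=s^{*}N(s)$ (using \ref{ss1.13}), and ring isomorphisms send atoms to atoms; running this for $s$ and for $s^{*}$ gives $S=S^{*}$. The atomic half is the real obstacle, because $\L(s^{*})$ does \emph{not} send the weak order $\lsub{a}{\CL}$ onto $\lsub{b}{\CL}$, but --- by Proposition \ref{ss2.9}(a) --- onto the translate $N(s^{*})+\lsub{b}{\CL}$. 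Hence one cannot transport atoms of weak order along $\L(s^{*})$ verbatim.

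I would instead produce an explicit bijection of the two intervals below $N(s)$ and $N(s^{*})$ in the respective weak orders. The claim is that $N(g)\mapsto N(s^{*}g)$ restricts to a bijection
\[
[\eset,N(s)]_{\lsub{a}{\CL}}\longrightarrow[\eset,N(s^{*})]_{\lsub{b}{\CL}},\qquad N(s^{*}g)=N(s^{*})+s^{*}N(g),
\]
with inverse $N(k)\mapsto N(sk)$. Well-definedness and the containment $N(s^{*}g)\seq N(s^{*})$ follow from $N(g)\seq N(s)$ via $s^{*}N(g)\seq s^{*}N(s)=N(s^{*})$; the two composites are identities because $ss^{*}=1_{a}$ and $s^{*}s=1_{b}$, and everything is determined at the level of cocycle values since $\L(s^{*})$ is injective. (Equivalently, this bijection is the protomesh isomorphism of Proposition \ref{ss2.9}(a) followed by the complementation involution $X\mapsto N(s^{*})+X$ on $[\eset,N(s^{*})]_{\lsub{b}{\L}}$; or, in the style of the factorization $s=g\cdot(g^{*}s)$ and \eqref{eq2.2.2}, one checks $N(g)=\eset\iff N((g^{*}s)^{*})=N(s^{*})$ and $N(g)=N(s)\iff N((g^{*}s)^{*})=\eset$.) Granting the bijection, the two intervals have equal cardinality, so one has size $2$ iff the other does; hence $N(s)$ is an atom of $\lsub{a}{\CL}$ iff $N(s^{*})$ is an atom of $\lsub{b}{\CL}$, i.e. $s\in A\iff s^{*}\in A$, giving $A=A^{*}$. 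The step deserving care is exactly this order-reversing identification: inversion matches the bottom of the weak-order interval below $N(s)$ with that below $N(s^{*})$ only after complementation, and confirming that this complementation is an honest involution carrying one weak-order interval onto the other is the crux of the whole lemma.
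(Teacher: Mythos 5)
Your proof is correct and is essentially the paper's own argument: parts (b), (c) and the $S=S^{*}$ half of (a) are handled identically (Boolean ring isomorphisms $\L(g)$ carry atoms to atoms, and $N(s^{*})=s^{*}N(s)$), and your interval bijection $N(g)\mapsto N(s^{*}g)=N(s^{*})+s^{*}N(g)$ is exactly the correspondence the paper extracts from \eqref{eq2.2.2}, applied there contrapositively to a single witness $\eset\sneq N(x)\sneq N(s)$ in a factorization $s=xy$ rather than to the whole interval $[\eset,N(s)]_{\lsub{a}{\CL}}$. The only difference is packaging: you count cardinalities of the two weak-order intervals, while the paper transports one intermediate element; both rest on the same cocycle manipulation under inversion.
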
\begin{proof}
To show $S=S^{*}$ in (a), observe that for any 
 $g\in \lrsub{b}{G}{a}$, $\L(g)$ maps the set of atoms of 
 $\lsub{a}{\L}$ to the set of atoms of $\lsub{b}{\L}$. 
 If $s\in \lrsub{a}{S}{b}$, then $N(s)$ is an atom of $ \lsub{a}{\L}$ 
 and so $N(s^{*})=s^{*}N(s)$ is an atom of
 $\lsub{b}{\L}$ i.e. $s^{*}\in \lsub{b}{S}$. This shows that 
 $S^{*}\seq S$. Hence $S=S^{**}\seq S^{*}\seq S$.  To prove that 
 $A=A^{*}$, it will suffice to show that if $s\in \mor(G)$ is not in 
 $A$, then $s^{*}\not\in A$ also.   Note that either $N(s)=\eset$ or there exist 
  $x,y\in \mor(G)$ with $s=xy$ and  $\eset \sneq N(x) \sneq N(s)$.
    In the first case $N(s^{*})=s^{*}N(s)=\eset$ and in the second 
    case, $s^{*}=y^{*}x^{*}$ with $\eset \sneq N(y^{*})\sneq N(s^{*})$ (see  \eqref{eq2.2.2}). In either case, $s^{*}\not \in A$. This proves (a). Parts (b)--(c) are immediate consequences of the definitions. \end{proof}
\subsection{}\label{ss3.5}   The following is a very useful property of interval finite protorootoids. 
 \begin{lem*} If  the protorootoid $\CR=(G,\L,N)$ is interval finite and faithful, then $G$ is atomically generated.\end{lem*}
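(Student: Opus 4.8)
The plan is to show, by induction, that every morphism $g\in\lsub{a}{G}$ (for every $a\in\ob(G)$) is a product of atomic morphisms; since identity morphisms lie in every generated subgroupoid (see \ref{ss3.1}), this proves that $A_{\CR}$ generates $G$. The induction is on the integer $n:=\vert[\eset,N(g)]_{\lsub{a}{\CL}}\vert$, which is finite by interval finiteness and is at least $1$ because $\eset$ lies in the interval. If $n=1$ then $N(g)=\eset$, so faithfulness (\ref{ss2.5}) forces $g=1_{a}$ and there is nothing to prove.

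For the inductive step suppose $n\geq 2$, so $N(g)\neq\eset$. The finite poset $[\eset,N(g)]_{\lsub{a}{\CL}}$ then has a minimal nonzero element $\alpha$, which is automatically an atom of all of $\lsub{a}{\CL}$, since anything strictly between $\eset$ and $\alpha$ would again lie in the interval and contradict minimality. Choosing $s\in\lsub{a}{G}$ with $N(s)=\alpha$ gives an atomic morphism, say $s\in\lrsub{a}{G}{b}$, and $N(s)\seq N(g)$ means $s\lsub{a}{\leq}g$, so by \ref{ss2.7} the expression $g=s\,(s^{*}g)$ is compatible. Writing $h:=s^{*}g\in\lsub{b}{G}$, compatibility (\ref{ss2.6}) yields $N(g)=N(s)\dotcup sN(h)$, and applying $s^{*}$ gives $s^{*}N(g)=\G\dotcup N(h)$ with $\G:=N(s^{*})$, a disjoint union in which $\G$ is an atom of $\lsub{b}{\CL}$. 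It therefore suffices to prove $\vert[\eset,N(h)]_{\lsub{b}{\CL}}\vert<n$, for then $h$ is a product of atomic morphisms by induction and $g=sh$ is as well.

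This strict decrease in the size of the interval is the key step, and I expect it to be the main obstacle. I would obtain it from the protomesh isomorphism of \ref{ss2.9}: applied to $s^{*}\in\lrsub{b}{G}{a}$, the map $\L(s^{*})\colon\lsub{a}{\L}\to\lsub{b}{\L}$ carries $\lsub{a}{\CL}$ bijectively and order-isomorphically onto $\G+\lsub{b}{\CL}$, sends $\eset$ to $\eset$, and sends $N(g)$ to $\G+N(h)$. Hence $n$ equals the number of $\gamma\in\G+\lsub{b}{\CL}$ with $\gamma\seq\G+N(h)$. The assignment $\delta\mapsto\G+\delta$ then embeds $[\eset,N(h)]_{\lsub{b}{\CL}}$ into this set: indeed $\G\cap\delta=\eset$ because $\delta\seq N(h)$ and $\G\cap N(h)=\eset$, so $\G+\delta=\G\cup\delta\seq\G\cup N(h)$, and the map is injective because adding $\G$ is a bijection of the Boolean ring. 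It fails to be surjective, since $\eset=\G+\G$ lies in the target (as $\G=N(s^{*})\in\lsub{b}{\CL}$) but is not of the form $\G+\delta$ with $\delta\seq N(h)$, which would force $\delta=\G\not\seq N(h)$. Thus $\vert[\eset,N(h)]_{\lsub{b}{\CL}}\vert<n$, completing the induction. The only genuinely delicate point is keeping the indices and composition conventions straight when invoking \ref{ss2.9}; once the protomesh isomorphism is set up with the correct morphism $s^{*}$, the counting argument is immediate.
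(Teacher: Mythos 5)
Your proof is correct, and its skeleton is the same as the paper's: induction on $n=\vert[\eset,N(g)]_{\lsub{a}{\CL}}\vert$, the base case settled by faithfulness, and the inductive step by splitting off an atomic morphism $s$ with $N(s)\seq N(g)$ and passing to $h=s^{*}g$. Where you diverge is in the one genuinely delicate point, the strict decrease of the interval. The paper stays entirely inside the weak orders: using Proposition \ref{ss2.4} it exhibits a bijection $x\mapsto sx$ from $\mset{x\mid 1_{b}\leq x\leq h}$ onto $\mset{x'\mid s\leq x'\leq g}$, and the latter is a proper subset of $[1_{a},g]$ because $1_{a}$ lies in $[1_{a},g]$ but $s\not\leq 1_{a}$. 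You instead transport the interval to the Boolean ring level via the protomesh isomorphism $\L(s^{*})$ of Proposition \ref{ss2.9}, embed $[\eset,N(h)]_{\lsub{b}{\CL}}$ into its image by $\delta\mapsto \G+\delta$, and observe the embedding misses $\eset=\G+\G$. The two arguments encode the same counting fact (translation by $s$, respectively by $N(s^{*})$, matches up the relevant intervals, and the element $1_{a}$, respectively $\eset$, is left over), so neither buys extra generality; the paper's version is slightly more direct, while yours has the incidental merit of handling the base case correctly --- the paper's ``$L(g)=0$'' is off by one, since $\eset$ always lies in $[\eset,N(g)]_{\lsub{a}{\CL}}$, whereas you correctly take $n\geq 1$ with base case $n=1$ --- and of spelling out why a minimal nonzero element of the finite interval is an atom of all of $\lsub{a}{\CL}$, which the paper leaves implicit.
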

  \begin{proof}
    Define a function $L\colon \mor(G)\to \Nat$ by 
    $L(g)=\vert [\eset, N(g)]_{\lsub{a}{\CL}}\vert $ for $g\in \lsub{a}{G}$. We show that $g\in \mor(G)$ is in the subgroupoid $G'$ of $G$ generated by $A:=A_{\CR}$ by induction on $L(g)$.
 If $L(g)=0$, then $N(g)=\eset$ and $g=1_{a}\in G'$ since $\CR$ is faithful. Suppose $L(g)>0$. There is $s\in \lsub{a}{A}$ with $N(s)\seq N(g)$. Set $g':=s^{*}g\in \lsub{b}{G}$. Since $N(s)\seq N(g)$, it follows that $N(s^{*})\cap N(g')=\eset$. Using  Proposition \ref{ss2.4}, one checks that the map
 $x\mapsto x':=sx$ defines a bijection 
 \begin{equation*} \mset{x\in \lsub{b}{G}\mid \lrsub{}{1}{b}\lsub{b}{\leq  } x \lsub{b}{\leq  }g'} \xrightarrow{\cong}
 \mset{x'\in\lsub{a}{G}\mid s\lsub{a}{\leq  } x' \lsub{a}{\leq  }g}. \end{equation*}
 Since $\lrsub{}{1}{a}\lsub{a}{\leq}g$ but $s\lsub{a}{\not\leq}\lrsub{}{1}{a}$, it follows that $L(g')<L(g)$. By induction, $g'\in\mor(G')$ so $g=sg'\in \mor(G')$ as required. 
  \end{proof}
  \begin{rem*}  If $\CR$ is interval finite and faithful and $H$ is a subgroupoid of $G$, then the restriction $\CR_{H}$ is also interval finite and faithful, so the atomic morphisms of $\CR_{H}$ form  a  set of generators 
 of $H$.
  \end{rem*}
  
    \subsection{} \label{ss3.6}
 Part (c) of the lemma below eliminates some redundancies from the definition of  principal protorootoids.     \begin{lem*} Let $\CR=(G,\L,N)$ be a protorootoid, $A:=A_{\CR}$ and $S:=S_{\CR}$.
       \begin{num}
       \item  If $\CR$ is cocycle finite, it is interval finite.
        \item If $\CR$ is simply generated, it is cocycle finite and atomically generated, and for all $g\in \mor(G)$, $l_{N}(g)\leq l_{S}(g)$.
     \item $\CR$ is principal if and only if  it is simply generated and for all $g\in \mor(G)$, $l_{S}(g)\leq l_{N}(g)$.
     \item If $\CR$ is interval finite, it is regular.
            \end{num}\end{lem*}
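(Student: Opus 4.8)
The plan is to read off (a)--(c) directly from the rank theory of \ref{ss3.2} and Lemma \ref{ss3.4}, and to prove (d) by using interval finiteness to force the relevant directed sets to be finite. For (a), I would start from cocycle finiteness, which says each $N(g)$ has finite rank, and apply Lemma \ref{ss3.2}: finiteness of the rank of $N(g)$ means the principal ideal $[\eset,N(g)]_{\lsub{a}{\L}}$ is finite, and since $\lsub{a}{\CL}\seq\lsub{a}{\L}$ the weak-order interval $[\eset,N(g)]_{\lsub{a}{\CL}}$ is a subset of it, hence finite. For (b), I would use $S=S^{*}$ (Lemma \ref{ss3.4}(a)) to write an arbitrary $g\in\lsub{a}{G}$ as $g=s_{1}\cdots s_{n}$ with all $s_{i}\in S$, and expand $N(g)$ by the cocycle identity \eqref{eq2.6.2}. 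The crucial point is that each map $\L(\cdot)$ is a Boolean-ring isomorphism and so carries atoms to atoms (as already observed in the proof of Lemma \ref{ss3.4}); hence each summand $s_{1}\cdots s_{i-1}N(s_{i})$ is an atom and $N(g)$ lies below the join of these $n$ atoms, an element of rank at most $n$. This gives cocycle finiteness, and taking $n=l_{S}(g)$ yields $l_{N}(g)=\rank(N(g))\le l_{S}(g)$; atomic generation follows immediately from $S\seq A$ (Lemma \ref{ss3.4}(b)), since the subgroupoid generated by $A$ contains that generated by $S$, which is all of $G$.

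Part (c) should then be purely formal: if $\CR$ is principal it is by definition simply generated with $l_{S}=l_{N}$, giving one inequality, while conversely simple generation together with $l_{S}\le l_{N}$ combines with the reverse inequality and the cocycle finiteness supplied by (b) to recover all three defining conditions of principality. For (d), I would take a non-empty directed $X\seq\lsub{a}{\CL}$ whose join $x$ exists in $\lsub{a}{\CL}$, observe that every element lies in $[\eset,x]_{\lsub{a}{\CL}}$, and conclude from interval finiteness that $X$ is finite. A finite directed poset has a greatest element, so $X$ has a maximum $m$, forcing $x=m\in X$; then any upper bound $y$ of $X$ in $\lsub{a}{\L}$ satisfies $m\seq y$, so $x\seq y$ and $x$ is the join of $X$ in $\lsub{a}{\L}$ as well.

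I expect the only step needing real care to be the rank estimate in (b): one must verify that passing through the isomorphisms $\L(s_{1}\cdots s_{i-1})$ preserves atoms, so that the cocycle expansion genuinely places $N(g)$ inside a join of at most $l_{S}(g)$ atoms and the rank bound follows. Everything else is a short unwinding of the definitions, with (d) resting entirely on the elementary observation that a finite directed poset has a maximum element.
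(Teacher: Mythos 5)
Your proposal is correct and follows essentially the same route as the paper: (a) via the inclusion $[\eset,N_{g}]_{\lsub{a}{\CL}}\seq[\eset,N_{g}]_{\lsub{a}{\L}}$, (b) via the cocycle expansion of $N(g)$ into atoms together with the rank estimates of Lemma \ref{ss3.2}(a), (c) as a formal consequence of (b) and the definition of principality, and (d) by using interval finiteness to reduce to a finite directed set with a maximum. Your treatment of (d) is, if anything, slightly more explicit than the paper's (you spell out that a finite directed poset has a greatest element, which the paper's proof uses tacitly), but there is no substantive difference.
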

\begin{proof} Part (a) holds since for $g\in \mor(g)$,
\begin{equation}\label{ss3.6.1}
[\eset,N_{g}]_{\lsub{a}{\CL}}\seq [\eset,N_{g}]_{\lsub{a}{\L}}.
\end{equation} For (b), assume that $\CR$ is simply generated. By 
Lemma \ref{ss3.4}(b), $\CR$ is atomically generated. 
Let $g\in \mor(G)$, say $g=s_{1}\cdots s_{n}$ where $s_{i}\in S$ and $n=l_{S}(g)$. By  the cocycle condition, \begin{equation*}
N_{g}=\sum_{i=1}^{n}s_{1}\ldots s_{i-1}(N_{s_{i}}).
\end{equation*} In this, $\rank(s_{1}\cdots s_{i-1}(N_{s_{i}}))=
\rank(N_{s_{i}})=1$, so by Lemma  \ref{ss3.2}(a), 
$\rank(N_{g})\leq \sum_{i=1}^{n}1=n=l_{S}(g)$. This completes the proof of (b), and (c) follows immediately from (b) and the definition of 
principal protorootoids. Part (d) holds since if a non-empty subset $X$
of $\lsub{a}{\CL}$ has a join $x$ in $\lsub{a}{\CL}$, then $X$ is finite and  $x$ is the maximum element of $X$, so $x$ is also the join of $X$ in $\lsub{a}{\L}$.    
\end{proof}

\ssect{} Principal protorootoids may also  be  characterized as follows. \label{ss3.7}
\begin{lem*} Let  $\CR=(G,\L,N)$  be a  protorootoid,  $A:=A_{\CR}$ and  $S:=S_{\CR}$.
Then $\CR$ is principal if and only if  it is faithful and  atomically generated and $A\seq S$.
 In that case, $A=S$.\end{lem*}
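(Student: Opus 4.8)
The plan is to prove both implications together with the final assertion by exploiting, for cocycle-finite simply generated protorootoids, the interplay between the rank function $l_{N}=\rank\circ N$ in the Boolean rings $\lsub{a}{\L}$ and chains in the weak orders $\lsub{a}{\CL}$. First note the cheap half of the last sentence: by Lemma \ref{ss3.4}(b) one always has $S\seq A$, so whenever $A\seq S$ holds one automatically gets $A=S$. This will be used to pass freely between ``atomically generated'' and ``simply generated''.

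For the forward implication, suppose $\CR$ is principal. Faithfulness follows from Lemma \ref{ss2.5}: if $N(g)=0$ then $l_{N}(g)=\rank(0)=0$, so $l_{S}(g)=l_{N}(g)=0$ forces $g$ to be an identity morphism. Cocycle finiteness gives interval finiteness by Lemma \ref{ss3.6}(a), whence atomic generation follows from Lemma \ref{ss3.5}. The substantive point is $A\seq S$. Given $s\in\lsub{a}{A}$, the element $N(s)$ is a nonzero atom of $\lsub{a}{\CL}$, so $s\neq 1_{a}$ and $r:=l_{S}(s)=l_{N}(s)=\rank(N(s))\ge 1$. Writing a reduced expression $s=s_{1}\cdots s_{r}$ with $s_{i}\in S=S^{*}$ (using $S=S^{*}$ from Lemma \ref{ss3.4}(a)), the cocycle formula \eqref{eq2.6.2} presents $N(s)$ as a Boolean sum of the $r$ rank-one elements $z_{i}:=s_{1}\cdots s_{i-1}N(s_{i})$, each of rank one because $\L(\cdot)$ preserves rank and $N(s_{i})$ is an atom of a Boolean ring. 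Since $\rank(N(s))=r$, submodularity of rank (Lemma \ref{ss3.2}(a)) forces the $z_{i}$ to be pairwise disjoint, so the expression is compatible in the sense of \eqref{eq2.6.3}; then \eqref{eq2.6.4} produces a strictly increasing chain $\eset\sneq N(s_{1})\sneq\cdots\sneq N(s_{1}\cdots s_{r})=N(s)$ of $r+1$ elements inside $[\eset,N(s)]_{\lsub{a}{\CL}}$. But $N(s)$ being an atom of $\lsub{a}{\CL}$ forces this interval to have exactly two elements, so $r=1$; hence $N(s)$ is an atom of $\lsub{a}{\L}$ and $s\in S$.

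For the converse, assume $\CR$ is faithful and atomically generated with $A\seq S$; then $A=S$, so atomic generation is simple generation, and Lemma \ref{ss3.6}(b) supplies cocycle finiteness together with $l_{N}\le l_{S}$. By Lemma \ref{ss3.6}(c) it remains to show $l_{S}(g)\le l_{N}(g)=\rank(N(g))$ for all $g$, which I would prove by induction on $\rank(N(g))$. If $\rank(N(g))=0$ then $N(g)=\eset$, so $g=1_{a}$ by faithfulness. Otherwise interval finiteness (Lemma \ref{ss3.6}(a)) makes $[\eset,N(g)]_{\lsub{a}{\CL}}$ a finite poset with more than one element, hence it contains an atom $N(s)\seq N(g)$ with $s\in\lsub{a}{A}=\lsub{a}{S}$, so $\rank(N(s))=1$ and $l_{S}(s)=1$. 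Setting $g':=s^{*}g$, the relation $N(s)\seq N(g)$ gives via \eqref{eq2.2.2} a disjoint decomposition $N(g)=N(s)\dotcup sN(g')$, whence $\rank(N(g'))=\rank(N(g))-1$. By induction $l_{S}(g')\le\rank(N(g'))$, and $g=sg'$ yields $l_{S}(g)\le 1+l_{S}(g')\le\rank(N(g))$, as required.

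The main obstacle, and the heart of the argument, is the compatibility step in the forward direction: converting the numerical equality $\rank(N(s))=l_{S}(s)$ into the structural statement that a reduced expression for an atomic morphism produces a chain below $N(s)$ in weak order. Everything else is bookkeeping with submodularity of rank (Lemma \ref{ss3.2}(a)), the cocycle identity, and the orthogonality/compatibility dictionary of \ref{ss2.6}--\ref{ss2.7}. The one point demanding care is verifying that each inclusion in that chain is \emph{strict}, which is precisely where disjointness of the rank-one pieces $z_{i}$ enters.
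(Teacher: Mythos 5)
Your proof is correct, and its skeleton is the same as the paper's: in both, the inclusion $A\seq S$ comes from taking a reduced expression $s=s_{1}\cdots s_{r}$ of an atomic morphism, producing a strictly increasing chain $\eset\sneq N(s_{1})\sneq\cdots\sneq N(s)$ in weak order and invoking atomicity to force $r=1$, while the converse is an induction that peels off an atomic morphism $s$ with $N(s)\seq N(g)$ and drops the rank by one. The differences lie in how each induction step is closed, and here your route is genuinely lighter. The paper first proves a preliminary lemma ($l_{N}(gs)=l_{N}(g)\pm 1$, hence $l_{S}(gs)=l_{S}(g)\pm 1$ and $l_{S}\equiv l_{N}\pmod 2$) and then concludes by sandwiching, $n=l_{N}(g)\le l_{S}(g)\in\set{l_{S}(sg)\pm 1}=\set{n-2,n}$; you bypass that lemma entirely, since trivial subadditivity $l_{S}(g)\le 1+l_{S}(g')$ together with the inequality $l_{N}\le l_{S}$ of Lemma \ref{ss3.6}(b) and the characterization in Lemma \ref{ss3.6}(c) already suffice. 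Likewise, in the forward direction the paper gets strictness of the chain by applying principality to every prefix $s_{1}\cdots s_{i}$, whereas you apply it only to $s$ itself and force pairwise disjointness of the atomic summands $z_{i}$ by a rank count via Lemma \ref{ss3.2}(a); the two derivations are interchangeable. What the paper's extra work buys is that the $\pm 1$ step property established inside its proof is reused immediately afterwards (Corollary \ref{ss3.8}) to construct the sign character, so it is not wasted there; but for the lemma in isolation your argument is the more economical one, and every step of it checks out.
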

\begin{proof} Suppose that $\CR$ is faithful, atomically generated and that $A\seq S$. Then $A=S$ by  Lemma \ref{ss3.4}(b), so 
$\CR$ is simply generated.
     Let  $g\in \lrsub{b}{G}{a}$ and $s\in \lsub{a}{S}$ for some
    $a\in\ob(G)$. Then $N(gs)=N(g)+gN(s)$. Since $N(s)$ is an atom of $\lsub{a}{\L}$, $gN(s)$ is an atom of $\lsub{b}{\L}$. By Lemma \ref{ss3.2}, it follows that $l_{N}(gs)=l_{N}(g)+1$ if $gN(s)\cap N(g)=\eset$, while otherwise, $gN(s)\seq N(g)$ and $l_{N}(gs)=l_{N}(g)-1$.
    In particular, $l_{N}(gs)\in\set{l_{N}(g)\pm 1}$.
     This   implies by induction on $l_{S}(g)$ that $l_{S}(g)\equiv l_{N}(g)
 \pmod 2$, and that $l_{S}(gs)\in\set{l_{S}(g)\pm 1}$ for $g\in \mor(G)$ and 
  $s\in S$ if $\exists gs$. Since $l_{S}(x)=l_{S}(x^{*})$ for all $x\in \mor(G)$, it follows that  for  $s\in S$, $g\in G$ with $\exists sg$, one has $l(sg)\in\set{l(g)\pm 1}$. 

To show $\CR$ is principal, it remains to show that for all $g\in \mor(G)$, $l_{S}(g)=l_{N}(g)$. This is  proved by induction on  $n:=l_{N}(g)$.
If $n=0$, then $N({g})=\eset$, $g$ is an identity morphism since  
$\CR$ is faithful, and so $l_{S}(g)=0=n$.   Next, suppose inductively that $l_{S}(g)=l_{N}(g)$ for all $g$ with $l_{N}(g)<n$, where $n>0$. Let $g\in \lsub{a}{G}$ with $l_{N}(g)=n$. 
 Since $n>0$, $N(g)\neq \eset$.
Since $\CR$ is interval finite, the interval 
$[0,N(g)]_{\lsub{a}{\CL}}$ has an atom i.e. there is some 
$r\in \lsub{a}{A}$  with $N({r})\seq N({g})$. 
Set $s=r^{*}\in S$. The cocycle condition implies $N({sg})=s(N({g})+N({r}))$  and so $l_{N}(sg)=l_{N}(g)-1=n-1$.  By induction, $l_{S}(sg)=n-1$. 
By Lemma \ref{ss3.6}(b),  \begin{equation*}
n=l_{N}(g)\leq l_{S}(g)\in\set{l_{S}(sg)\pm 1}=\set{(n-1)\pm 1}
\end{equation*} and thus  $l_{S}(g)=n=l_{N}(g)$ as required.

Conversely, suppose  that $\CR$ is principal.
If $g\in \lsub{a}{G}$ with $N(g)=\eset$, then 
$l_{N}(g)=0=l_{S}(g)$ so $g=1_{a}$. Hence $\CR$ is faithful.
Since $\CR$ is simply generated, it  is atomically generated by Lemma \ref{ss3.6}(b).  Let $s\in \lsub{a}{A}$ with $l_{N}(s)=l_{S}(s)=n$. Note $n>0$ by Lemma \ref{ss3.4}(c). Write $s=s_{1}\cdots s_{n}$ with $s_{i}\in S$.
Then $l_{N}(s_{1}\cdots s_{i})=l_{S}(s_{1},\cdots s_{i})=i$ for $i=0,\ldots, n$ which implies that
$\eset\sneq N(s_{1})\sneq N(s_{1}s_{2})\sneq \ldots \sneq   N(s)$.
In particular, since $N(s)$ is an atom of $\lsub{a}{\CL}$, it follows that
$n=1$ and 
 $s=s_{1}\in S$. This shows that $A\seq S$. One has $S= A$ by Lemma \ref{ss3.4}(b), completing the proof.
 \end{proof}

\ssect{} \label{ss3.8} Let $\set{\pm 1}$ be 
the group of units of the ring $\Int$ and  regarded it  as a groupoid with one object.
Let $G$ be a groupoid and $S$ be a set of generators of $G$. A sign character of $(G,S)$ is defined to be  a groupoid homomorphism 
$\e\colon G\to \set{\pm 1}$ such that $\e(s)=-1$ for all $s\in S$.
If it exists, it is unique, since it is given on morphisms by $\e(g)=(-1)^{l_{S}(g)}$.

\begin{cor*} Let $\CR=(G,\L,N)$ be a principal protorootoid and
$S:=S_{\CR}$. Then $(G,S)$ admits a sign character.\end{cor*}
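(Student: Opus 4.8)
The plan is to verify that the function forced by \ref{ss3.8}, namely $\e(g):=(-1)^{l_S(g)}$, is a groupoid homomorphism taking the value $-1$ on each $s\in S$. Since \ref{ss3.8} already records that any sign character must be given by this formula, only existence is at issue, and existence amounts to checking three things: that $\e$ sends identities to $1$, that $\e$ is multiplicative on composable pairs, and that $\e(s)=-1$ for simple $s$. The first is immediate from $l_S(1_a)=0$, and the third follows because, $\CR$ being principal, $l_S(s)=l_N(s)=\rank(N(s))=1$, the last equality holding since $N(s)$ is by definition an atom of $\lsub{a}{\L}$. Thus the whole statement reduces to multiplicativity, i.e. to showing that $l_S(gh)\equiv l_S(g)+l_S(h)\pmod 2$ for all composable $g,h$.

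Here I would use principality to rewrite $l_S=l_N=\rank\circ N$ and invoke the cocycle identity $N(gh)=N(g)+gN(h)$ together with the fact that $\L(g)$ is a Boolean ring isomorphism, hence rank-preserving, so that $\rank(gN(h))=\rank(N(h))$. The one genuinely new ingredient is a mod-$2$ additivity of rank under the Boolean (symmetric-difference) sum: for finite-rank $x,y$ one has $\rank(x+y)\equiv\rank(x)+\rank(y)\pmod 2$. I would obtain this from Lemma \ref{ss3.2}(a): since $x\cup y=(x+y)\dotcup(x\cap y)$ is a disjoint (orthogonal) union, one has $\rank(x\cup y)=\rank(x+y)+\rank(x\cap y)$, and substituting this into the identity $\rank(x)+\rank(y)=\rank(x\cap y)+\rank(x\cup y)$ of Lemma \ref{ss3.2}(a) yields $\rank(x+y)=\rank(x)+\rank(y)-2\rank(x\cap y)$. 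Applying this with $x=N(g)$ and $y=gN(h)$ then gives $l_S(gh)=\rank(N(gh))\equiv \rank(N(g))+\rank(gN(h))=l_S(g)+l_S(h)\pmod 2$, which is exactly $\e(gh)=\e(g)\e(h)$.

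There is essentially no serious obstacle here: the argument is a direct parity computation, and the only step requiring a moment's care is the mod-$2$ rank additivity, which is a short consequence of Lemma \ref{ss3.2}(a). The one point to flag is that cocycle finiteness of a principal protorootoid is what guarantees that all the ranks appearing are finite, so that Lemma \ref{ss3.2}(a) is applicable to $N(g)$ and $gN(h)$; this is precisely where principality (rather than mere faithfulness) enters. Assembling these observations shows that $\e$ is a well-defined groupoid homomorphism $G\to\set{\pm 1}$ with $\e(s)=-1$ for all $s\in S$, which is the required sign character.
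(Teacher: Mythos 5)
Your proof is correct, and it takes a genuinely different route from the paper's. The paper disposes of the corollary in two lines by citing the proof of Lemma \ref{ss3.7}: there it is shown, one simple generator at a time, that $l_{S}(gs)=l_{S}(g)\pm 1$ whenever $s\in S$ and $\exists gs$, and (with $l_{S}(s)=1$, from Lemma \ref{ss3.4}(c)) multiplicativity of $\e(g):=(-1)^{l_{S}(g)}$ follows by writing one factor as a word in $S$ and iterating. You instead establish the parity congruence $l_{N}(gh)\equiv l_{N}(g)+l_{N}(h)\pmod 2$ for an arbitrary composable pair in a single step, from the cocycle identity $N(gh)=N(g)+gN(h)$, rank-invariance under the isomorphism $\L(g)$, and the mod-$2$ additivity of rank under the Boolean sum, $\rank(x+y)=\rank(x)+\rank(y)-2\rank(x\cap y)$, which you correctly derive from Lemma \ref{ss3.2}(a) together with the orthogonal decomposition $x\cup y=(x+y)\dotcup(x\cap y)$. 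Your accounting of where principality enters is also right: cocycle finiteness keeps all ranks finite, $l_{S}=l_{N}$ transfers the congruence to $l_{S}$, and atomicity of $N(s)$ in $\lsub{a}{\L}$ gives $\e(s)=-1$. The paper's route buys brevity---after Lemma \ref{ss3.7} the corollary is essentially free---while yours buys generality and avoids induction on word length: it shows that $g\mapsto(-1)^{l_{N}(g)}$ is a homomorphism on any cocycle finite protorootoid, with simple generation and the equality $l_{S}=l_{N}$ needed only to convert this into the required sign character for $(G,S)$. (As a side remark, your rank formula with the factor $2$ is the correct one; the corresponding display in the paper's proof of Lemma \ref{ss3.13}(a) omits that factor, though the conclusion there is unaffected, since one only needs to know when $\rank(N(g_{1})\cap g_{1}N(g_{2}))$ vanishes.)
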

\begin{proof} The preceding proof shows that if $g\in \mor(G)$ and $s\in S$ and  $l_{S}(gs)=l_{S}(g)\pm 1$ if $\exists gs$. Also,  $l_{S}(s)=1$ by Lemma \ref{ss3.4}(c). Thus, one may take   $\e(g):=(-1)^{l_{S}(g)}$.
\end{proof}

 \subsection{Abridgement}\label{ss3.9} Let $\CR=(G,\L,N)$ be a protorootoid. Define
 $\L'$ as the $G$-subrepresentation (in $\bringc$) of $\L$ generated by the 
 elements $\lsub{a}{N}(g) \in \lsub{a}{\L}$ for $a\in \ob(G)$ and  
 $g\in \lsub{a}{G}$.  As functor, $\L'$ is determined on objects by 
 defining, for $a\in \ob(G)$,  $\L'(a)$ as the subring of  $\lsub{a}{\L}$ generated by all 
 elements $\lsub{a}{N}(g) \in \lsub{a}{\L}$  with  
 $g\in \lsub{a}{G}$. For a morphism $g\in \lrsub{a}{G}{b}$, $\L(g)$  
 maps $\L'(b)$ into $\L'(a)$ (since for $h\in\lsub{b}{G}$,
$gN(h)=N(gh)+N(g)$) and so $\L'(g)$ may be defined as 
  the restriction of $\L(g)$ to a morphism 
  $\L'(g)\colon \L'(b)\to \L'(a)$  in $\bringc $. 
  Obviously, $N$ restricts to a $G$-cocycle  $N'$ for  $\L'$, 
  giving a protorootoid  $(G,\L',N')$ which will be called 
 the \emph{abridgement} $\CR^{a}:=(G,\L',N')$ of $\CR$.  The protorootoid $\CR$ is abridged  as defined in \ref{ss3.3}(k)   if it is equal to $\CR^{a}$. 
 
 It is immediate from the definition that a protorootoid $\CR$ and its 
 abridgement  have equal underlying groupoid-preorders:
 $\mathfrak{P}(\CR)=\mathfrak{P}(\CR^{a})$. 
 The big weak orders  of $\CR$ and $\CR^{a}$ are also equal as posets (though their  corresponding protomeshes $(\lsub{a}{\L},\lsub{a}{\CL})$  and  $(\lsub{a}{\L}',\lsub{a}{\CL})$ differ). 
 The set of atomic (resp., simple) morphisms of  $\CR$
 is equal to (resp., a subset of) the set of atomic (resp., simple) morphisms of $\CR^{a}$. Any property of protorootoids $\CR$ which depends only on the preorder isomorphism type of  $\CR$ holds for $\CR$ if and only if it holds for $\CR^{a}$.  If $\CR$ is faithful, or has any one of the properties in \ref{ss3.3}(a)--(e) or (h)--(n), then  $\CR^{a}$ has that same property (using Lemma \ref{ss3.7} for (i));
 the converse holds for faithfulness,  (a), (c), (d), atomically generated in (h), (j) and (m).
 
  \begin{rem*} The abridgement of a unitary protorootoid need not be unitary.  However, there is an  analogue of abridgement for unitary protorootoids. It attaches to a unitary protorootoid
 $\CR=(G,\L,N)$ a unitary protorootoid $(G,\L'',N'')$ where for $a\in \ob(G)$,  $\L'(a)$ is the subring  of $\L(a)$ generated by $\set{1_{\L(a)}}\cup \set{N(g)\mid g\in \lsub{a}{G}}$, and $N''$ is the evident restriction of $N$.  One may have for example, that  $\CR^{a}=(G,\L',N')$ where $\L'(b)$ is the Boolean  ring of all finite subsets of some set $U(b)$ whereas
 $\L''(b)$ is the Boolean algebra of all subsets of $U(b)$ which are finite or cofinite (that is,  have finite complement) in $U(b)$. 
 \end{rem*}

   \subsection{}\label{ss3.10}  Let $\prootc^{a}$ denote the full subcategory of $\prootc$ consisting of  abridged protorootoids.
   There is an abridgment functor  $\ab\colon \prootc\to \prootc^{a}$ with $\ab(\CR)=\CR^{a}$ for $\CR$ as above, and defined on morphisms as follows. Let $\CT:=(H,\G,M)$ be a 
 protorootoid with abridgement $\CT^{a}=(H,\G',M')$, and 
 let $f=(\a,\nu)\colon \CT\to \CR$ be a morphism in 
 $\prootc$. For any $b\in \ob(H)$ and $h\in \lsub{b}{H}$, one has $\nu_{b}(M_{h})=N_{\a(h)}$. It follows from the definitions   that the homomorphism  $\nu_{b}\colon \G(b)\to \L(\a(b))$ of Boolean rings restricts  to  a homomorphism
 $\nu'_{b}\colon \G'(b)\to \L'(\a(b))$. Clearly,  the homomorphisms $\nu'_{b}$ for $b\in \ob(H)$ are the components of a natural transformation $\nu'\colon \G'\to \L'\a$, and $f':=(\a,\nu')\colon \CT^{a}\to \CR^{a}$ is a morphism in $\prootc$. Setting $\ab(f):=f'$ defines the functor $\ab$ as  required.  
 
 It is easily seen that  $\ab$ is right adjoint to the inclusion functor 
 $\mathfrak{B}\colon\prootc^{a}\to \prootc$. 
 The unit of the adjunction is the identity natural transformation of the identity functor $\Id_{\prootc^{a}}$. 
 The component at $\CR=(G,\L,N)$ of the counit is the 
 protorootoid morphism $(\Id_{G},\mu)\colon \CR^{a}\to \CR$ in which for all $a\in \ob(G)$,  $\mu_{a}$ is the inclusion $\L'(a)\to \L(a)$. In particular, $\prootc^{a}$ is a full, coreflective subcategory of $\prootc$. 
 
\ssect{}  \label{ss3.11} The  following  result describes the relationship between principal and preprincipal protorootoids. \begin{prop*} The protorootoid $\CR$ is preprincipal if and only if its abridgement $\CR^{a}$ is principal. In that case, the atomic generators of $\CR$ coincide with the simple generators of $\CR^{a}$.
 \end{prop*}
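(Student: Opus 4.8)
The plan is to reduce both directions to the characterization of principal protorootoids given in Lemma \ref{ss3.7}, using the transfer properties of abridgement recorded in \ref{ss3.9}. Throughout I write $\CR^{a}=(G,\L',N')$, and I recall that $\CR$ and $\CR^{a}$ have the same underlying groupoid-preorder, that $N'(g)=N(g)$ for every morphism $g$, that the atomic morphisms coincide ($A_{\CR}=A_{\CR^{a}}$), and that each $\L'(a)$ is the subring of $\lsub{a}{\L}$ generated by $\{N(g)\mid g\in\lsub{a}{G}\}$. By Lemma \ref{ss3.7}, proving $\CR^{a}$ principal amounts to showing it is faithful, atomically generated and $A_{\CR^{a}}\seq S_{\CR^{a}}$. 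Faithfulness of $\CR$ and $\CR^{a}$ are equivalent by \ref{ss3.9}; and when $\CR$ is preprincipal it is faithful and interval finite, so Lemma \ref{ss3.5} gives that $G$ is atomically generated, whence so is $\CR^{a}$. The one genuinely substantial point, then, is to see that the preprincipal condition forces every atomic morphism of $\CR$ to be a \emph{simple} morphism of the abridgement.

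I expect this atom-transfer step to be the main obstacle, and I would handle it as follows. Fix $a\in\ob(G)$ and $s\in\lsub{a}{A}$ with $A:=A_{\CR}$, so $N(s)\neq\eset$. Preprincipality says that for each $g\in\lsub{a}{G}$ either $N(g)\cap N(s)=\eset$ or $N(s)\seq N(g)$; in either case $N(g)\cap N(s)\in\{\eset,N(s)\}$. I would then consider the ring homomorphism
\begin{equation*}
\phi\colon\L'(a)\to\L'(a),\qquad\phi(x)=x\cap N(s),
\end{equation*}
whose image is the principal ideal $\{x\in\L'(a)\mid x\seq N(s)\}$. Since the elements $N(g)$ generate $\L'(a)$ as a Boolean ring and $\phi$ carries each of them into $\{\eset,N(s)\}$, the image of $\phi$ is generated by $\{\eset,N(s)\}$ and so equals $\{\eset,N(s)\}$. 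Thus the principal ideal of $N(s)$ in $\L'(a)$ has exactly two elements, i.e. $N(s)$ is an atom of $\L'(a)$ and $s\in S_{\CR^{a}}$. This yields $A_{\CR}\seq S_{\CR^{a}}$, and Lemma \ref{ss3.7} then gives that $\CR^{a}$ is principal with $A_{\CR^{a}}=S_{\CR^{a}}$.

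For the converse I would argue directly. Assuming $\CR^{a}$ principal, Lemma \ref{ss3.7} makes it faithful, atomically generated and $A_{\CR^{a}}=S_{\CR^{a}}$, and principality forces it to be cocycle finite, hence interval finite by Lemma \ref{ss3.6}(a); the converse transfer properties in \ref{ss3.9} then pass faithfulness and interval finiteness back to $\CR$. To recover the preprincipal inequality, I would take $s\in\lsub{a}{A}$, note $s\in A_{\CR}=A_{\CR^{a}}=S_{\CR^{a}}$ so that $N(s)$ is an atom of $\L'(a)$, and observe that for any $g\in\lsub{a}{G}$ the element $N(g)\cap N(s)\seq N(s)$ lies in $\L'(a)$ and hence equals $\eset$ or $N(s)$ --- exactly the desired dichotomy. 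Finally the chain $A_{\CR}=A_{\CR^{a}}=S_{\CR^{a}}$ identifies the atomic generators of $\CR$ with the simple generators of $\CR^{a}$. Once the homomorphism $x\mapsto x\cap N(s)$ of the second paragraph is available, everything else is the bookkeeping supplied by \ref{ss3.9} and Lemma \ref{ss3.7}.
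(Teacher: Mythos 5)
Your proof is correct and takes essentially the same route as the paper: its key step in the direction ``preprincipal $\Rightarrow$ abridgement principal'' is to show that $B:=\mset{u\in \lsub{a}{\L}\mid u\cap N_{s}\in \set{\eset,N_{s}}}$ is a subring containing the generators $\lsub{a}{\CL}$ of the abridged ring, which is just the preimage formulation of your image computation for the multiplication-by-$N(s)$ homomorphism $\phi$. The surrounding bookkeeping in your argument (Lemma \ref{ss3.7}, the transfer facts of \ref{ss3.9}, and ``principal $\Rightarrow$ preprincipal'' via atoms of $\L'(a)$ for the converse) likewise matches the paper's reductions (i)--(iv).
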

 
 \begin{proof} Let $\CR$ be any protorootoid.  Note that if $\CR$ is preprincipal and $\CT:=\CR^{a}$ is principal, then $S_{\CT}=A_{\CT}=A_{\CR}$ by  Lemma \ref{ss3.7} and the comments at the end of \ref{ss3.10}. Those comments also show that (i)--(ii)   below hold: 
 \begin{conds}
 \item If $\CR$ is principal, then $\CR^{a}$ is principal.
 \item $\CR$ is preprincipal if and only if $\CR^{a}$ is preprincipal.
 \item If $\CR$ is principal, it is preprincipal.
 \item If $\CR$ is preprincipal and abridged, it is principal.
 \end{conds}
It is easily seen that (ii)--(iv)  imply ((i) and)  the first assertion of the Lemma, so we need only prove (iii)--(iv).
   
To prove (iii),  assume  that $\CR$ is principal. Then 
 $A:=A_{\CR}=S_{\CR}=:S$ and $l_{A}=l_{S}=l_{N}$. 
 For any 
 $a\in \ob(G)$, $g\in \lsub{a}{G}$ and $s\in \lsub{a}{A}$, either 
 $N(g)\cap N(s)=\eset$ or $N(s)\seq N(g)$  since $s\in S$ implies that $N(s)$ is an atom of $\lsub{a}{\L}$. Hence $\CR$ is  preprincipal, since it is faithful and interval finite.
 
 To prove (iv),  assume that $\CR=(G,\L,N)$ is preprincipal and abridged.
 Let $a\in \ob(G)$ and $s\in \lsub{a}{A}$ where $A:=A_{\CR}$.
 Define \begin{equation}\label{eq3.11.1}
 B:=\mset{u\in \lsub{a}{\L}\mid u\cap N_{s}\in \set{\eset,N_{s}}}.
 \end{equation}
 Using the fact that $\set{\eset,N_{s}}$ is a subring of 
 $\lsub{a}{\L}$, one easily checks that $B$ is a subring of 
 $\lsub{a}{\L}$.
 By the assumption that $\CR$ is preprincipal, $B$ contains
 $\lsub{a}{\CL}$. Since $\CR$ is abridged,  $\lsub{a}{\CL}$ generates  $\lsub{a}{\L}$ as ring, so $B=\lsub{a}{\L}$.
 This implies that $N(s)$ is an atom of $\lsub{a}{\L}$ i.e. $s\in S:=S_{\CR}$. Thus $A\seq S$. Since $\CR$ is preprincipal, it is 
 faithful  and interval finite, hence atomically generated by Lemma \ref{ss3.5}. By Lemma \ref{ss3.7}, it follows that $\CR$ is principal.
        \end{proof}
     
\subsection{}\label{ss3.12} The final characterization of principal protorootoids here is the following.
\begin{lem*} Let $\CR$ be a protorootoid. 
    \begin{num}\item  $\CR$ is principal if and only if  it is preprincipal and saturated.
    \item $\CR$ is preprincipal if and only if it is faithful, interval finite and pseudoprincipal.
    \end{num} 
\end{lem*}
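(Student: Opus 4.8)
The plan is to prove the four implications separately, relying on Lemma \ref{ss3.5} (interval finite and faithful implies atomically generated), Lemma \ref{ss3.7} (principal $\iff$ faithful, atomically generated, with $A_{\CR}\seq S_{\CR}$), and the rank additivity of Lemma \ref{ss3.2}(a). Throughout I fix $a\in\ob(G)$ and write $A:=A_{\CR}$, $S:=S_{\CR}$. Both directions of part (b) amount to comparing the preprincipal dichotomy with the pseudoprincipal dichotomy at atomic morphisms, and are short.

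For (b), forward: a preprincipal $\CR$ is faithful and interval finite by definition, so only pseudoprincipality needs checking. Given $h,g\in\lsub{a}{G}$ with $N(h)\neq\eset$, interval finiteness and Lemma \ref{ss3.5} produce an atom $N(s)\seq N(h)$ of the weak order with $s\in\lsub{a}{A}$, and the preprincipal hypothesis applied to $(g,s)$ shows that $x:=s$ satisfies $N(x)\seq N(g)$ or $N(x)\cap N(g)=\eset$. For the converse, assume $\CR$ faithful, interval finite and pseudoprincipal; given $g\in\lsub{a}{G}$ and $s\in\lsub{a}{A}$, apply pseudoprincipality with $h:=s$ (note $N(s)\neq\eset$) to get $x$ with $\eset\neq N(x)\seq N(s)$. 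Since $N(s)$ is an atom of $\lsub{a}{\CL}$ one has $N(x)=N(s)$, so the pseudoprincipal dichotomy for $x$ is exactly the preprincipal dichotomy for $s$. This settles (b).

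For (a), the converse is again quick. Assume $\CR$ preprincipal and saturated; by definition it is faithful and interval finite, hence atomically generated by Lemma \ref{ss3.5}, so by Lemma \ref{ss3.7} it suffices to prove $A\seq S$. For $s\in\lsub{a}{A}$ the interval $[\eset,N(s)]_{\lsub{a}{\CL}}$ equals $\set{\eset,N(s)}$, whose unique maximal chain is $\set{\eset,N(s)}$; saturation forces this to be a maximal chain of $[\eset,N(s)]_{\lsub{a}{\L}}$, meaning no element of $\lsub{a}{\L}$ lies strictly between $\eset$ and $N(s)$, i.e. $N(s)$ is an atom of $\lsub{a}{\L}$ and $s\in S$. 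Hence $A\seq S$ and $\CR$ is principal.

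The substantial direction is that a principal $\CR$ is saturated, and here the main obstacle is controlling the lengths of maximal chains in weak order. Since $\CR$ is principal it is cocycle finite, so for $g\in\lsub{a}{G}$ the Boolean interval $[\eset,N(g)]_{\lsub{a}{\L}}$ is a finite Boolean algebra of rank $n:=l_{N}(g)$, in which every maximal chain has $n+1$ elements and every $(n{+}1)$-element chain is maximal. It thus suffices to show each maximal chain of $[\eset,N(g)]_{\lsub{a}{\CL}}$ has $n+1$ elements, equivalently that each covering relation $N(x)\sneq N(y)$ of the weak order raises $l_{N}$ by exactly one. Writing $y=x(x^{*}y)$ as a compatible expression gives $N(y)=N(x)\dotcup xN(x^{*}y)$ and $l_{N}(y)=l_{N}(x)+l_{N}(x^{*}y)$ by Lemma \ref{ss3.2}; if $l_{N}(x^{*}y)\geq 2$, then (as $\CR$ is principal, hence faithful, interval finite and atomically generated) Lemma \ref{ss3.5} furnishes an atomic $r$ with $\eset\neq N(r)\sneq N(x^{*}y)$, and $xr$ yields $N(x)\sneq N(xr)\sneq N(y)$ in $\lsub{a}{\CL}$, the disjointness $N(x)\cap xN(r)=\eset$ following from $N(r)\seq N(x^{*}y)$ together with the compatibility of $x(x^{*}y)$. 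This contradicts the covering hypothesis, so $l_{N}(x^{*}y)=1$, the chain-length count follows, and $\CR$ is saturated. This covering-relation computation is the crux; the remaining implications are direct unwindings of the definitions.
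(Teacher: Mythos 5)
Your proposal is correct, and parts (b) and the converse half of (a) coincide with the paper's own arguments essentially step for step (same use of interval finiteness to produce an atomic morphism below $N(h)$, same transfer of the dichotomy through $N(x)=N(s)$, same reduction of the converse of (a) to $A_{\CR}\seq S_{\CR}$ via Lemma \ref{ss3.5} and Lemma \ref{ss3.7}). Where you genuinely diverge is the main direction of (a), that a principal protorootoid is saturated. The paper writes a maximal chain of $[\eset,N(g)]_{\lsub{a}{\CL}}$ as a compatible expression $g=x_{1}\cdots x_{n}$, asserts each $x_{i}\in S_{\CR}$ via the correspondence between chains and compatible expressions together with the substitution property of \ref{ss2.6}, and then closes with the global count $l_{S}(g)\leq n\leq l_{N}(g)$ combined with $l_{S}=l_{N}$. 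You instead argue locally: every covering relation $N(x)\sneq N(y)$ of the weak order raises Boolean rank by exactly one, because a rank jump of at least two would let you insert the intermediate element $N(xr)$ built from an atomic $r$ below $N(x^{*}y)$, contradicting the covering hypothesis; the chain-length count in the finite Boolean interval then finishes the proof. The two routes rest on the same ingredients (rank additivity from Lemma \ref{ss3.2}, $A_{\CR}=S_{\CR}$ from Lemma \ref{ss3.7}, interval finiteness), but yours spells out by hand the step the paper dismisses as ``easy to show,'' which makes it more self-contained at the cost of some length.

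Two citation-level points you should tighten. First, the existence of an atomic $r$ with $\eset\neq N(r)\seq N(x^{*}y)$ comes from interval finiteness alone (it is the argument inside the proof of Lemma \ref{ss3.5}, not the statement of that lemma, which concerns generation). Second, the strictness $N(r)\sneq N(x^{*}y)$ is not delivered by your parenthetical list (faithful, interval finite, atomically generated): you need that an atomic morphism of a principal protorootoid satisfies $l_{N}(r)=1$, which is exactly $A_{\CR}=S_{\CR}$ from Lemma \ref{ss3.7}; since $l_{N}(x^{*}y)\geq 2$, this forces $N(r)\neq N(x^{*}y)$. Both facts are available under your hypotheses, so these are repairs of wording rather than of substance.
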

\begin{proof}  For the proof of (a), suppose first that $\CR=(G,\L,N)$ is principal.
It is preprincipal by \ref{ss3.11}(iii). To show $\CR$ is saturated, let $g\in \lsub{a}{G}$.  It will suffice to show that a maximal chain $M$ in $[\eset,N(g)]_{\lsub{a}{\CL}}$ is also a maximal chain in $[\eset,N(g)]_{\lsub{a}{\L}}$. This is trivial if $g=1_{a}$. Assume
  $g\neq 1_{a}$ and write $M$ as 
\begin{equation}\label{eq3.12.1}
\eset \sneq N(x_{1})\sneq N(x_{1}x_{2})\sneq\ldots\sneq N(x_{1}\cdots x_{n}), \quad g=x_{1}\ldots x_{n}
\end{equation} in $[\eset,N(g)]_{\lsub{a}{\CL}}$. It is easy to show  that each $x_{i}\in S:=S_{\CR}$ (for instance, from the correspondence of chains such as $M$ with compatible expressions with value $g$, and the substitution property; see \ref{ss2.6}). Now any  chain in the Boolean interval  
$[\eset,N(g)]_{\lsub{a}{\L}}$ has length at most $\rank(N(g))=l_{N}(g)$, and any 
expression of $g$ as a product of elements of $S$ has length 
at least $l_{S}(g)$, so $l_{S}(g)\leq n\leq l_{N}(g)$. 
Since $\CR$ is principal, $l_{N}(g)=l_{S}(g)=n$. This implies that the above chain is a maximal chain in
 $[\eset,N(g)]_{\lsub{a}{\L}}$, so
$\CR$ is saturated.

Conversely, suppose that $\CR$ is preprincipal and saturated.
Let $S:=S_{\CR}$, $A:=A_{\CR}$. By Lemma 3.5, Lemma \ref{ss3.7} and the definition of preprincipal protorootoids, it  will suffice to show 
that $A\seq S$. Let $a\in \ob(G)$ and $s\in \lsub{a}{S}$.
Then since $N(s)$ is an atom of $\lsub{a}{\CL}$, it follows that $\eset\seq N(s)$ is a maximal chain in $[\eset, N(s)]_{\lsub{a}{\CL}}$. Since $\CR$ is saturated, this is also a maximal chain
in $[\eset, N(s)]_{\lsub{a}{\L}}$ i.e. $N(s)$ is an atom of 
$\lsub{a}{\L}$. This shows $s\in S$, so $A\seq S$ as required to complete the proof of (a).

To prove (b),  set  $A:=A_{\CR}$. Suppose first that $\CR$ is preprincipal. Then it is interval finite and faithful by assumption. Let $g,h\in \lsub{a}{G}$ with $N(h)\neq \eset$. Since $\CR$ is interval finite, there exists some $x\in \lsub{a}{A}$,  such that
$\eset\seq N(x)\seq N(h)$.  By definition of preprincipal rootoid, either
$N(x)\cap N(g)=\eset$ or $N(x)\seq N(g)$, which implies that $\CR$ is pseudoprincipal. Conversely, suppose that $\CR$ is faithful, interval finite and pseudoprincipal. Let $g\in\lsub{a}{G}$ and $s\in \lsub{a}{S}$.
Taking $h=s$ in the defining condition of pseudoprincipal protorootoid, there exists $x\in \lsub{a}{G}$ with $\eset\neq N(x)\seq N(s)$, and either  $N(x)\cap N(g)=\eset$, or $N(x)\seq N(g)$. Since $s$ is an atom and $\CR$ is faithful, it follows that $x=s$, and so $\CR$ is 
preprincipal.
 \end{proof}
\begin{rem*} The previous results show that principal  protorootoids are interval finite, regular, saturated and pseudoprincipal. 
Regular, saturated,  pseudoprincipal rootoids will be studied in subsequent papers as a generalization of principal rootoids.  \end{rem*}
   \subsection{}\label{ss3.13}
     In the case of  cocycle finite, principal and preprincipal  protorootoids,
     compatibility has the following  descriptions in terms of   length functions on $G$.
  \begin{lem*} Let  $\CR=(G,\L,N)$ be a protorootoid  and  $e=\lrsub{a_{0}}{[g_{1},\ldots, g_{n}]}{a_{n}}$ be an expression  in $G$ with value $g$.
   \begin{num}
   \item If $\CR$ is cocycle finite, then $e$ is compatible if and only if 
  $l_{N}(g)=\sum_{i=1}^{n}l_{N}(g_{i})$.
  \item   If $\CR$ is principal and $S:=S_{\CR}$, then $e$ is compatible if and only if 
  $l_{S}(g)=\sum_{i=1}^{n}l_{S}(g_{i})$.
  \item     If $\CR$ is preprincipal and $A:=A_{\CR}$, then $e$ is compatible if and only if 
  $l_{A}(g)=\sum_{i=1}^{n}l_{A}(g_{i})$.
    \end{num}
  \end{lem*}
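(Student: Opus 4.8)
The plan is to prove (a) by a rank computation on the partial products and then obtain (b) and (c) as formal consequences. Write $h_i := g_1\cdots g_i$ for $0\le i\le n$, so $h_0 = 1_{a_0}$ and $h_n = g$, and set $x_i := g_1\cdots g_{i-1}N(g_i) = (\L(h_{i-1}))(N(g_i))\in\lsub{a_0}{\L}$. Since $\L(h_{i-1})$ is an isomorphism of Boolean rings (see \ref{ss2.2}) it carries the atoms of the principal ideal generated by $N(g_i)$ bijectively onto those generated by $x_i$, so $\rank(x_i) = \rank(N(g_i)) = l_N(g_i)$. The cocycle condition gives $N(h_i) = N(h_{i-1}) + x_i$, and by \eqref{eq2.6.4} the expression $e$ is compatible precisely when $\eset\seq N(h_1)\seq\cdots\seq N(h_n)$, i.e. when $N(h_{i-1})\seq N(h_i)$ for every $i$; by (the $n=2$ case of) \eqref{eq2.2.2} this holds if and only if $N(h_{i-1})\cap x_i = \eset$.

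First I would record the basic rank estimate in a Boolean ring. For finite-rank $a,b$ the elements $a+b$ and $a\cap b$ are orthogonal with $a\cup b = (a+b)\dotcup(a\cap b)$, so additivity of rank on orthogonal elements together with Lemma \ref{ss3.2}(a) yields $\rank(a+b) = \rank(a)+\rank(b)-2\rank(a\cap b)$. In particular $\rank(a+b)\le\rank(a)+\rank(b)$, with equality if and only if $a\cap b = \eset$. Applying this with $a = N(h_{i-1})$ and $b = x_i$ gives $\rank(N(h_i))\le\rank(N(h_{i-1}))+l_N(g_i)$, with equality exactly when $N(h_{i-1})\cap x_i = \eset$.

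Next I would run an induction on $i$ proving that $\rank(N(h_i))\le\sum_{j=1}^i l_N(g_j)$, with equality if and only if $\eset\seq N(h_1)\seq\cdots\seq N(h_i)$. The inductive step uses the displayed estimate: equality at stage $i$ forces both equality at stage $i-1$ (so by induction the first $i-1$ steps already form a chain) and $N(h_{i-1})\cap x_i = \eset$ (the $i$-th step). Taking $i = n$ then gives $l_N(g) = \rank(N(h_n)) = \sum_{j=1}^n l_N(g_j)$ if and only if $\eset\seq N(h_1)\seq\cdots\seq N(h_n)$, i.e. if and only if $e$ is compatible. This proves (a).

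For (b), a principal protorootoid is cocycle finite with $l_S = l_N$ on all of $\mor(G)$, so (b) is immediate from (a). For (c), since $\CR$ is preprincipal it is faithful and interval finite, hence atomically generated by Lemma \ref{ss3.5}, so $l_A$ is defined; moreover its abridgement $\CR^a$ is principal with $S_{\CR^a} = A_{\CR} = A$ by Proposition \ref{ss3.11}. The groupoid underlying $\CR^a$ is again $G$, so the length function of $\CR^a$ relative to its simple generators $A$ is exactly $l_A$, and compatibility of $e$ agrees for $\CR$ and $\CR^a$ since $\mathfrak{P}(\CR) = \mathfrak{P}(\CR^a)$ by \ref{ss3.9} and compatibility depends only on the underlying groupoid-preorder (the cocycle values coincide and are compared inside the subring $\L'(a_0)\seq\lsub{a_0}{\L}$). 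Applying (b) to $\CR^a$ then yields (c). The only genuinely substantive point is the equality-case bookkeeping in the induction for (a); everything else is formal once the identity $\rank(a+b) = \rank(a)+\rank(b)-2\rank(a\cap b)$ is in hand.
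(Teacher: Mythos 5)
Your proof is correct and follows essentially the same route as the paper's: the crux in both is the rank identity from Lemma \ref{ss3.2} (your $\rank(a+b)=\rank(a)+\rank(b)-2\rank(a\cap b)$, which incidentally carries the factor of $2$ that the paper's displayed computation drops, harmlessly, since either version gives equality exactly when $a\cap b=\eset$), with (b) obtained from $l_{S}=l_{N}$ and (c) obtained by passing to the abridgement via Proposition \ref{ss3.11}, exactly as in the paper. The only cosmetic difference is in part (a), where you run a direct induction on the partial products $h_{i}$ with equality-case bookkeeping, whereas the paper settles $n=2$ and then cites the substitution property (Lemma \ref{ss2.6}); the two inductions are interchangeable.
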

  \begin{proof} Part (a) is trivial for  $n\leq 1$.
   Suppose now that $n=2$. Then  by Lemma \ref{ss3.2},
   \begin{multline*}l_{N}(g_{1}g_{2})=\rank(N(g_{1}g_{2}))=\rank(N(g_{1})+g_{1}N(g_{2}))\\ =\rank(N(g_{1})+\rank(g_{1}N(g_{2}))-\rank(N(g_{1}) \cap g_{1} N(g_{2}))\\
   =l_{N}(g_{1})+l_{N}(g_{2})-\rank(N(g_{1}) \cap g_{1} N(g_{2}))\end{multline*} since $[\eset, g_{1}N(g_{2})]\cong [\eset,N(g_{2})]$
   implies that  $\rank(g_{1}N(g_{2}))=l_{N}(g_{2})$.
  Thus $l_{N}(g_{1}g_{2})=l_{N}(g_{1})+l_{N}(g_{2})$ if and only if  
  $N(g_{1})\cap g_{1}N(g_{2})=\eset$, which holds if and only if  $g_{1}g_{2}$ is compatible. 
    In general,  (a) follows from the $n=2$ case  by induction using the substitution property (Lemma \ref{ss2.6}).  Part (b) follows from (a) since if $\CR$ is principal, then $l_{S}=l_{N}$ by definition. Part (c) follows from (b)  using  Proposition \ref{ss3.11}.
   \end{proof}
   \subsection{}\label{ss3.14} The following shows that, for principal or  cocycle finite protorootoids, the weak preorder can be equivalently expressed in terms of the appropriate  length functions in a manner similar to the standard definition for  weak order on Coxeter groups (cf. \cite{BjBr}). Similarly, orthogonality of morphisms can be expressed in terms of length functions (see \ref{ss2.7}).
   \begin{cor*} Let $\CR:=(G,\L,N)$ be a protorootoid, $a\in \ob(G)$, $x,y\in \lsub{a}{G}$ and  set $z:=x^{*}y$.\begin{num}
   \item If $\CR$ is cocycle finite, then $x\lsub{a}{\leq   }y$ if and only if  
   $l_{N}(y)=l_{N}(x)+l_{N}(z)$.
        \item   If $\CR$ is principal and $S:=S_{\CR}$ is its set of simple generators,   then $x\lsub{a}{\leq   }y$ if and only if  
   $l_{S}(y)=l_{S}(x)+l_{S}(z)$.  
    \item   If $\CR$ is preprincipal and $A:=A_{\CR}$ is its set of simple generators,   then $x\lsub{a}{\leq   }y$ if and only if  
   $l_{A}(y)=l_{A}(x)+l_{A}(z)$.    \end{num}
       \end{cor*}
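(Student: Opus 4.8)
The plan is to reduce all three equivalences to the compatibility criteria of Lemma~\ref{ss3.13}, by recognizing the weak order relation $x\lsub{a}{\leq}y$ as a statement about a single length-two expression.

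First I would record the elementary observation that, since $xx^{*}=1_{a}$, the morphism $z:=x^{*}y$ satisfies $xz=y$; moreover $z\in\lsub{\domn(x)}{G}$ and $x\in\lrsub{a}{G}{\domn(x)}$, so the expression $\lrsub{a}{[x,z]}{\domn(y)}$ (in the notation of \ref{ss2.6}) lies in $\lsub{a}{G}$ and has value $y$. Thus the data of the Corollary is precisely a factorization $y=xz$ of an element of $\lsub{a}{G}$ as a two-term expression.

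Next I would translate the hypothesis $x\lsub{a}{\leq}y$. By the definition of the weak right preorder (\ref{ss2.3}) this says $N(x)\seq N(y)=N(xz)$, and since $\eset\seq N(x)$ always holds, this is exactly condition \eqref{eq2.6.4} for the expression $[x,z]$ with $n=2$; equivalently it is the assertion $x\leq xz$ of Lemma~\ref{ss2.7}(iii). Hence, in all three parts, $x\lsub{a}{\leq}y$ holds if and only if the expression $[x,z]$ is compatible.

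Finally I would invoke Lemma~\ref{ss3.13}, which characterizes compatibility of an expression through its length functions in exactly the three regimes at issue. Applying part (a) when $\CR$ is cocycle finite, part (b) when $\CR$ is principal, and part (c) when $\CR$ is preprincipal, to the expression $[x,z]$ gives that $[x,z]$ is compatible if and only if $l_{N}(y)=l_{N}(x)+l_{N}(z)$, resp.\ $l_{S}(y)=l_{S}(x)+l_{S}(z)$, resp.\ $l_{A}(y)=l_{A}(x)+l_{A}(z)$. Combined with the previous paragraph, these are the three asserted equivalences. There is no real obstacle: the content lies entirely in Lemma~\ref{ss3.13}, and the corollary is a repackaging of it once $x\lsub{a}{\leq}y$ is identified with compatibility of $[x,x^{*}y]$. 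The only care needed is bookkeeping with the composition convention (so that $y=xz$, not $zx$), together with the remarks that cocycle finiteness, hence definedness of $l_{N}$, is automatic in the principal case, while $l_{A}$ is precisely the length function supplied by Lemma~\ref{ss3.13}(c) in the preprincipal case.
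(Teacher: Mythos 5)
Your proposal is correct and is essentially the paper's own argument: the paper's proof of this corollary is precisely the one-line reduction "follows from Lemmas \ref{ss2.7} and \ref{ss3.13}," which you have carried out in full by identifying $x\lsub{a}{\leq}y$ with compatibility of the two-term expression $[x,x^{*}y]$ and then applying the three length criteria of Lemma \ref{ss3.13}. Your bookkeeping (that $y=xz$ with $z\in\lsub{\domn(x)}{G}$, and that cocycle finiteness holds automatically in the principal case) is accurate and fills in exactly the details the paper leaves implicit.
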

       \begin{proof} This follows from Lemmas \ref{ss2.7} and 
       \ref{ss3.13}.\end{proof}
          \section{Rootoids}\label{s4}
  \subsection{Complemented protomeshes} \label{ss4.1} A protomesh $(R,L)$ is said to be \emph{complemented} if $R$ is unital and for each $A\in L$, $A^{\cp}:=1_{R}+A\in L$. \begin{lem*} Let $(R,L)$ be a complemented protomesh. 
\begin{num}\item   Suppose given a   family $(A_{i})_{i\in I}$ of elements of $L$ and $B\in L$ satisfying $A_{i}\cap B=\eset$ for all $i\in I$. If the join $A= \join_{i}A_{i}$ exists  in $L$, then $A\cap B=\eset$.
\item For all $A\in R$,  $(R,A+L)$ is a  complemented protomesh.
\item If $A\in L$, then $A+L$ has a maximum element $1_{R}$.
\end{num}
  \end{lem*}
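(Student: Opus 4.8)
The plan is to derive all three parts from one elementary fact about a Boolean algebra $R$, namely that $x \cap y = \eset$ if and only if $x \seq y^{\cp}$, combined with the defining feature of a complemented protomesh that complementation keeps $L$ inside itself. For part (a), I would first emphasize that $A = \join_i A_i$ denotes the least upper bound of the family in the \emph{poset} $L$ (whose order is induced from $R$), not the Boolean join in $R$. The hypothesis $A_i \cap B = \eset$ rewrites as $A_i \seq B^{\cp}$ for every $i$. Since $(R,L)$ is complemented and $B \in L$, we have $B^{\cp} \in L$, so $B^{\cp}$ is an upper bound of $(A_i)$ lying in $L$ itself; applying the least-upper-bound property of $A$ in $L$ then gives $A \seq B^{\cp}$, which is exactly $A \cap B = \eset$.

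For part (b), $R$ is unital because $(R,L)$ is complemented, so it remains only to check that complementation preserves $A + L$. Given $A + \D \in A + L$ with $\D \in L$, I would compute $(A + \D)^{\cp} = 1_R + A + \D = A + \D^{\cp}$ (using $x + x = 0$); since $\D^{\cp} \in L$ by complementedness, $(A + \D)^{\cp} \in A + L$, so $(R, A + L)$ is again complemented. For part (c), take $A \in L$, whence $A^{\cp} \in L$; then $A + A^{\cp} = 1_R$ (again using $A + A = 0$), so $1_R = A + A^{\cp} \in A + L$. As $1_R$ is the top element of the Boolean algebra $R$, it dominates every element of $R$, in particular every element of $A + L$, and being itself a member it is the maximum of $A + L$.

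The single point requiring care is the one highlighted in part (a): because the join is formed inside $L$ rather than in $R$, a general upper bound of the $A_i$ in $R$ would not license the conclusion. What makes the argument go through is precisely that complementedness supplies the specific upper bound $B^{\cp}$ \emph{within} $L$, so that the defining least-upper-bound property of $A$ applies directly. Notably, no completeness of $L$ and no existence of meets is invoked anywhere, which is what makes this lemma a convenient building block toward the join orthogonality property.
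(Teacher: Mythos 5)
Your proof is correct and follows essentially the same route as the paper's: part (a) via the equivalence $A_{i}\cap B=\eset\iff A_{i}\seq B^{\cp}$ together with $B^{\cp}\in L$ serving as an upper bound \emph{inside} $L$, and parts (b) and (c) via the computations $(A+\D)^{\cp}=A+\D^{\cp}$ and $1_{R}=A+A^{\cp}\in A+L$. Your added emphasis that the join is taken in the poset $L$ rather than in $R$ is exactly the point the paper's terse argument relies on, so nothing is missing.
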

  \begin{proof}  One has $A_{i}\seq B^{\cp}\in L$ for all $i$, so $A=\join A_{i}\seq B^{\cp}$ and $A\cap B=\eset$. This proves  (a).
  For (b), a typical element of $A+L$ is $A+B$ where $B\in L$.
  One has $B^{\cp}=1_{R}+B\in L$, so $(A+B)^{\cp}=(A+B)+1_{R}=A+(B+1_{R})\in A+L$.  For (c), $1_{R}=A+A^{\cp}\in A+L$.    \end{proof}
   \subsection{The JOP}\label{ss4.2}  The following condition ($*$)  on a    protomesh $(R,L)$, which is suggested by Lemma \ref{ss4.1}(a),   is called  the \emph{join orthogonality property (JOP)}.
   \begin{enumerate}\item[($*$)]  Suppose that  $(A_{i})_{i\in I}$ is a family  of elements of $L$ and $B\in L$ satisfies $A_{i}\cap B=\eset$ for all $i\in I$. If the join $A= \join_{i}A_{i}$ exists  in $L$, then $A\cap B=\eset$.
   \end{enumerate}
   By abuse of terminology, we may say that $L$ satisfies the JOP when ($*$) holds. 

 \subsection{Rootoids}\label{ss4.3} Rootoids are defined as faithful protorootoids such that their weak orders are complete meet semilattices which satisfy the JOP.  In full detail:
 
 \begin{defn*}  A protorootoid $\CR:=(G,\L,N)$ with big weak order $\CL$  is said to be  a \emph{rootoid}, or to be {\em rootoidal}, if it satisfies the following   conditions (i)--(iii):
\begin{conds}\item For all $a\in \ob(G)$, if $g,h\in \lsub{a}{G}$ with $N(g)=N(h)$, then $g=h$.
\item 
For  all $a\in \ob{G}$,  $\lsub{a}{\CL}:=\mset{N(g)\mid g\in \lsub{a}{G}}$ is a complete meet semilattice. 
\item  If $a\in \ob{G}$,   and $A_{i},A\in \lsub{a}{\CL}$ are  such that
$A_{i}\cap A=\eset$ for all $i\in I$ and 
$B:=\join_{i} A_{i}$ exists in $\lsub{a}{\CL}$, then $B\cap A=\eset$.
\end{conds}
\end{defn*}
A protorootoid satisfying (iii) is said to satisfy JOP, even if it does not satisfy (i)--(ii).  
 Rootoids are said to have a certain property of protorootoids if the underlying protorootoid has that property. For example,  a  rootoid  is \emph{complete} (resp., \emph{principal}) if it is  complete (resp., principal) as a protorootoid. Thus, a rootoid is complete if  and only if  each of its weak orders has a maximum element.  If $\CR$ is a complemented  rootoid, then each weak order $\lsub{a}{\CL}$ is a complete ortholattice and in particular, $\CR$ is  complete.
 \begin{rem*} (1) The conditions  for a protorootoid to be a rootoid depend only on its preorder isomorphism type, since (iii) may be  reformulated in terms of orthogonality of  morphisms in $G$.
 
 (2)  In particular, (1) implies that a protorootoid is a rootoid if and only if its abridgement is a rootoid, since abridgement doesn't change the underlying groupoid-preorder.
 \end{rem*}

 \subsection{} \label{ss4.4} Let  $\CR=(G,\L,N)$ 
  denote a  protorootoid with big weak order $\CL$.   Part (c) of the following is a technical consequence of the JOP which plays an important role in   subsequent papers.
\begin{lem*}   Let $x\in \lrsub{a}G{b}$.
\begin{num}\item The map $A\mapsto B:=x\cdot A= N_{x}\dotcup x(A)$
induces an order isomorphism
$ \mset{A\in \lsub{b}{\CL}\mid A\cap   N_{x^{*}}=\eset}\xrightarrow[\th]{\cong}
 \mset{B\in \lsub{a}{\CL}\mid B\sreq  N_{x}}$.
 \item The maps $\th$ and $\th^{-1}$ preserve whatever   meets and   joins exist in their domains (in the natural order induced by the appropriate weak order).
 \item  If $\CR$ is a rootoid, then $\domn(\th)$ (resp., $\cod(\th)$)  is a  complete join-closed meet subsemilattice of 
 $\lsub{b}{\CL}$ (resp., $\lsub{a}{\CL}$). Hence $\th$ (resp.,  $\th^{-1}$) preserves    meets or  joins (of  non-empty subsets  of its domain) which exist in  $\lsub{b}{\CL}$ (resp., in $\lsub{a}{\CL}$).\end{num}
\end{lem*}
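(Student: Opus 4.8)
The plan is to treat the three parts in order, exploiting that the map in question is assembled from the Boolean-ring isomorphism $\L(x)$ followed by a translation by $N_{x}$. For (a), I would first observe that the formula $A\mapsto x\cdot A=N_{x}+x(A)$ defines a map $\lsub{b}{\CL}\to\lsub{a}{\CL}$: by \eqref{eq2.2.1} one has $x\cdot N(g)=N(xg)$ for $g\in\lsub{b}{G}$, and since left multiplication $g\mapsto xg$ is a bijection $\lsub{b}{G}\to\lsub{a}{G}$, this map is a bijection of $\lsub{b}{\CL}$ onto $\lsub{a}{\CL}$ whose inverse is $B\mapsto x^{*}\cdot B$ (using functoriality of the dot action together with $1_{a}\cdot B=B$). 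Using $x(N_{x^{*}})=N_{x}$, which follows from $N(x^{*})=x^{*}N(x)$ and $xx^{*}=1_{a}$, together with \eqref{eq2.2.2}, I would check that this bijection carries $\{A\in\lsub{b}{\CL}\mid A\cap N_{x^{*}}=\eset\}$ exactly onto $\{B\in\lsub{a}{\CL}\mid B\sreq N_{x}\}$. For the order-isomorphism claim I would factor $A\mapsto x\cdot A$ as the order isomorphism $\L(x)$, which sends $\{A\cap N_{x^{*}}=\eset\}$ onto $\{u\cap N_{x}=\eset\}$, followed by $u\mapsto N_{x}\dotcup u$; the latter both preserves and reflects order among elements disjoint from $N_{x}$ by the elementary identity $(N_{x}\cup u)\cap(N_{x}\cup u')=N_{x}\dotcup(u\cap u')$. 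Alternatively one may simply quote the protomesh isomorphism of Proposition \ref{ss2.9}.

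Part (b) is then immediate: an order isomorphism between the posets $\domn(\th)$ and $\cod(\th)$ automatically carries any meet or join existing in its domain to the corresponding meet or join in its codomain, and likewise for $\th^{-1}$.

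For (c), assume $\CR$ is a rootoid, so each $\lsub{a}{\CL}$ is a complete meet semilattice satisfying the JOP. Both $\domn(\th)$ and $\cod(\th)$ are nonempty, containing $\eset$ and $N_{x}$ respectively. Closure under meets is easy in both cases: a meet is a lower bound, so a meet of elements disjoint from $N_{x^{*}}$ is again disjoint from $N_{x^{*}}$, while $N_{x}$ is a lower bound of any family in $\cod(\th)$ and hence lies below their meet. Closure of $\cod(\th)$ under existing joins is equally trivial, since $N_{x}\seq B_{i}\seq\join_{i}B_{i}$. The one substantial point --- and the step where the rootoid hypothesis is genuinely used --- is closure of $\domn(\th)$ under existing joins: given $A_{i}\in\domn(\th)$ with $A_{i}\cap N_{x^{*}}=\eset$ and a join $A=\join_{i}A_{i}$ in $\lsub{b}{\CL}$, I must show $A\cap N_{x^{*}}=\eset$, and this is exactly the JOP applied with $B=N_{x^{*}}\in\lsub{b}{\CL}$. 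This identifies $\domn(\th)$ and $\cod(\th)$ as complete join-closed meet subsemilattices. The final assertion then follows by combining (b) with the property recorded in \ref{ss1.1}: for such a subsemilattice, meets and existing joins of nonempty subsets computed in the ambient weak order coincide with those computed inside the subsemilattice, which $\th$ (resp.\ $\th^{-1}$) preserves by (b). The main obstacle is thus isolated in the join-closure of $\domn(\th)$, which is precisely what the JOP was designed to supply.
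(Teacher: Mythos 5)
Your proposal is correct and follows essentially the same route as the paper: in (a) the same dot-action computations (well-definedness via $x\cdot N(g)=N(xg)$, the identity $N(x)\cap x(A)=x(N_{x^{*}}\cap A)$, and \eqref{eq2.2.2}) establish the order isomorphism, (b) is the same triviality about order isomorphisms, and in (c) you give exactly the paper's four closure arguments — order ideal/coideal for the easy cases, $N_{x}$ as a lower bound for meets in $\cod(\th)$, and the JOP applied with $B=N_{x^{*}}$ as the one substantive step. The only (harmless) stylistic difference is that you organize (a) as a global bijection restricted via \eqref{eq2.2.2} and factor the order-preservation through $\L(x)$ and translation by $N_{x}$, where the paper directly checks $\th$ and $\th^{-1}$ are order-preserving.
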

\begin{rem*} The point of (c) in relation to (b) is that joins or meets  of non-empty subsets
of $\domn(\th)$ (resp., $\cod(\th)$)     exist or not, and have the same value if they exist,   whether taken in $\domn(\th)$ or in $\lsub{b}{\CL}$  (resp.,  in $\cod(\th)$ or in $\lsub{a}{\CL}$).
\end{rem*}
\begin{proof} Suppose $A\in \lsub{b}{\CL}$ with $A\cap   N_{x^{*}}=\eset$.
Let 
$B:=x\cdot A =N_{x}+ x(A)$.  Since $N(x)\cap x(A)=x(N_{x^{*}}\cap A)=\eset$, $B=N_{x}\dotcup x(A)$ is in the right hand side and $\th\colon A\mapsto x\cdot A=B$ is order-preserving.   Similarly, if 
$B\in \lsub{a}{\CL}$ with $ B\sreq  N_{x}$, then $A:=x^{*}\cdot B=N_{x^{*}}+x^{*}(B)$ where $N_{x^{*}}=x^{*}(N_{x})\seq x^{*}(B)$, so $A$ is in the left hand side and $\th^{-1}\colon B\mapsto x^{*}\cdot B=A$ is order preserving.
This proves (a). The assertions of (b) are true of any order isomorphism
whatsoever. 

For (c), $\domn(\th)$ is  closed under  taking meets of its non-empty subsets  in $\lsub{b}{\CL}$ since it is an order ideal of $\lsub{b}{\CL}$. Also, $\domn(\th)$ is closed under taking those joins which exist  in $\lsub{b}{\CL}$   of its  non-empty subsets  since $\CR$ satisfies JOP. Similarly,
$\cod(\th)$ is  closed  under taking   meets in $\lsub{a}{\CL}$  of non-empty subsets of $\cod(\th)$, since $N_{x}\in \lsub{a}{\CL}$  is a lower bound for any subset of $\cod(\th)$. Finally, 
$\cod(\th)$ is  closed  under  joins which exist   in $\lsub{a}{\CL}$ of non-empty subsets of $\cod(\th)$  since $\cod(\th)$ is an order coideal of $\lsub{a}{\CL}$. 
 \end{proof}
\subsection{}\label{ss4.5}
A  semilattice $X$ is said to be \emph{pseudocomplemented} if  it has a minimum element $0_{X}$ and for any $x\in X$, there exists $x'\in X$ such that
for $y\in X$, $y\meet x=0_{X}$ if and only if $y\leq x'$.  \begin{prop*} Let $\CR=(G,\L,N)$ be a principal rootoid with simple generators  $S:=S_{\CR}$. Let $a\in \ob(G)$.
\begin{num}
\item For all $x\in \lsub{a}{G}$, let $S(x):=\mset{s\in \lsub{a}{S}\mid N_{s}\seq N_{x}}$.  Then for a non-empty  family  $(x_{i})$ in $ \lsub{a}{G}$, one has  $\meet x_{i}=1_{a}$ if and only if  $\bigcap_{i}S(x_{i})= \eset$.
\item 
 If  $x_{i},y\in \lsub{a}{G}$ are  such that
$x_{i}\wedge y=1_{a}$  for all $i\in I\neq\eset$ and 
$x:=\join_{i} x_{i}$ exists in $\lsub{a}{G}$, then $x\wedge y=1_{a}$.
\item If $\CR$ is complete, then its weak orders are pseudocomplemented.
\end{num}\end{prop*}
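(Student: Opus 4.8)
The plan is to prove (a) first, deduce (b) from it together with the JOP, and then obtain (c) from (b) using completeness.

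For (a), I would exploit that faithfulness makes $g\mapsto N_g$ an order isomorphism $\lsub{a}{G}\to\lsub{a}{\CL}$ carrying $1_a$ to $\eset$, so that $\meet x_i=1_a$ is equivalent to $\meet_i N_{x_i}=\eset$ in $\lsub{a}{\CL}$. Since a principal protorootoid is cocycle finite, hence interval finite (Lemma \ref{ss3.6}(a)), and satisfies $A_{\CR}=S_{\CR}$ (Lemma \ref{ss3.7}), the atoms of $\lsub{a}{\CL}$ lying below a given $N_g$ are exactly the elements $N_s$ with $s\in S(g)$. Both implications then follow by contraposition. If some $s\in\bigcap_i S(x_i)$, then $N_s\seq N_{x_i}$ for all $i$ exhibits $s$ as a common lower bound with $N_s\neq\eset$, so $\meet x_i\neq 1_a$. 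Conversely, if $g:=\meet x_i\neq 1_a$ then $N_g\neq\eset$, and interval finiteness provides an atom $N_s\seq N_g$ of $\lsub{a}{\CL}$ with $s\in\lsub{a}{S}$; then $N_s\seq N_g\seq N_{x_i}$ for all $i$, i.e. $s\in\bigcap_i S(x_i)$.

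For (b), the crux is the inclusion $S\bigl(\join_i x_i\bigr)\seq\bigcup_i S(x_i)$, valid whenever $x:=\join_i x_i$ exists. Let $s\in\lsub{a}{S}$ with $N_s\seq N_x=\join_i N_{x_i}$, the join being transported along the order isomorphism. Because $s$ is simple, $N_s$ is an atom of the Boolean ring $\lsub{a}{\L}$, so for each $i$ either $N_s\seq N_{x_i}$ or $N_s\cap N_{x_i}=\eset$. If the second alternative held for every $i$, the JOP (Definition \ref{ss4.3}(iii)), applied with the family $(N_{x_i})_i$ and with $B=N_s$, would force $N_s\cap N_x=\eset$, contradicting $N_s\seq N_x$ and $N_s\neq\eset$; hence $N_s\seq N_{x_i}$ for some $i$, i.e. $s\in S(x_i)$. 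Combining this with (a): from $x_i\wedge y=1_a$ we get $S(x_i)\cap S(y)=\eset$ for all $i$, whence $S(x)\cap S(y)\seq\bigcup_i\bigl(S(x_i)\cap S(y)\bigr)=\eset$, and (a) again yields $x\wedge y=1_a$.

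For (c), since $\CR$ is complete each $\lsub{a}{\CL}$, equivalently $\lsub{a}{G}$, is a complete lattice, so for $x\in\lsub{a}{G}$ the set $Y:=\mset{y\in\lsub{a}{G}\mid y\wedge x=1_a}$, which contains $1_a$, has a join $x':=\join Y$. Part (b), applied to the non-empty family $Y$ and the element $x$, gives $x'\wedge x=1_a$. Then for any $y$: if $y\wedge x=1_a$ then $y\in Y$, so $y\leq x'$; and if $y\leq x'$ then monotonicity of meet gives $y\wedge x\leq x'\wedge x=1_a$, forcing $y\wedge x=1_a$. Thus $x'$ is a pseudocomplement of $x$, and each weak order is pseudocomplemented. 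The main obstacle is the atom-versus-join step in (b): one must know that an atom $N_s$ below a join $\join_i N_{x_i}$ formed in the weak order $\lsub{a}{\CL}$ (not in the Boolean ring) already lies below one of the $N_{x_i}$. This is precisely where the JOP is indispensable, since in its absence the weak-order join can sit strictly below the Boolean union and the atom could escape every single term.
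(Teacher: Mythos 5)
Your proof is correct and follows essentially the same route as the paper's: part (a) rests on the identities $S(\meet x_i)=\bigcap_i S(x_i)$ and $S(x)=\eset\iff x=1_a$ (with interval finiteness supplying atoms and $A_{\CR}=S_{\CR}$ identifying them as simple), part (b) on the atom dichotomy in $\lsub{a}{\L}$ plus the JOP to get $S(\join_i x_i)=\bigcup_i S(x_i)$, and part (c) on taking $x':=\join\mset{y\mid y\wedge x=1_a}$. You have merely filled in details the paper leaves to the reader; no gap.
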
 
\begin{proof} Part (a) follows on noting that
 \begin{equation}\label{eq4.5.1} S\bigl(\meet x_{i}\bigr)=\bigcap S(x_{i})\end{equation} and that, for $x\in  \lsub{a}{G}$,     $S(x)=\eset$ if and only if  $x=1_{a}$. 
For part (b), note first that for $s\in \lsub{a}{S}$ and $x\in \lsub{a}{G}$, one has either $N_{s}\seq N_{x}$  or $N_{s}\cap N_{x}=\eset$,  since $N_{s}$ is an atom of  $\lsub{a}{\L}$. The JOP therefore implies that
\begin{equation} \label{eq4.5.2} S\bigl(\join x_{i}\bigr)=\bigcup_{i}S(x_{i}).\end{equation}
Part (b), which is formally similar to JOP,  follows readily from \eqref{eq4.5.1}, \eqref{eq4.5.2} and (a). To prove  (c), one  checks from (b) that if  $x\in \lsub{a}{G}$, the pseudocomplement of $x$ is  \begin{equation}\label{eq4.5.3}
x':=\join_{\substack{y\in \lsub{a}{G}\\ y\wedge x=1_{a}}} y.\qedhere
\end{equation} 
\end{proof} 
 \subsection{Complete semilattices} \label{ss4.6} 
Subsections \ref{ss4.6}--\ref{ss4.7} describe basic facts about a category of complete semilattices 
which is involved  in the definition in \ref{ss4.8}  of the main  categories of rootoids considered in these papers.

Recall that the category $\preordc$ denotes the category of preordered sets 
with morphisms given by  preorder-preserving maps, and $\posetc $ is 
its full subcategory of posets.  One may view $\preordc$ as a full subcategory of $\catc$, with objects those small categories for which there is at most one morphism between any two objects. Then
     $\posetc$ is the full subcategory of $\preordc$ with objects the preordered sets (as categories)  in which every isomorphism is an identity map.

The  category $\csl_{0}$ is the following  
subcategory of 
$\posetc$. The objects $\G$ of $\csl_{0}$ are  the non-empty complete 
meet semilattices $\G$, viewed as non-empty posets  in which every  non-empty subset 
has a  greatest lower bound. 
A morphism $\th\colon \L\to \G$ in $\csl_{0}$ is a function $\L\to \G$, preserving minimum elements, which also preserves  all meets of non-empty subsets of $\L$ and all joins which exist of  subsets of $\L$. That is,   $\th(0_{\L})=0_{\G}$,
 $\th(\meet X)=\meet\th(X)$ for all $\eset\neq X\seq \L$, $\th(\join Y)=\join(\th Y)$ for any  $Y\seq \L$ for which $\join Y$ exists. Such a morphism is order-preserving i.e. has an underlying morphism in $\posetc$ (to see this,  note that  $x\leq y$ if and only if  $x\meet y=x$, in any poset).
 \begin{rem*} If $\th\colon \L\to \G$ is a morphism in $\csl_{0}$ and the underlying map of sets $\L\to \G$ is bijective, then 
 $\th$ is an  isomorphism in  $\csl_{0}$.
 This follows by first using the preceding criterion for $x\leq y$ to show that $\th$ is an isomorphism in $\posetc$.\end{rem*} 
  \subsection{}\label{ss4.7} Let $\th\colon \L\to \G$ be a morphism in $\csl_{0}$.
 Let \begin{equation}\label{eq4.7.1}
  \G':=\mset{\g\in \G\mid \g\leq \th(\a)\text{ \rm for some $\a\in \L$}}\end{equation} denote the order ideal of $\G$ generated by  the  image of $\th$. Then $\G'$ with the induced order is an object of  $\csl_{0}$ and $\th$ restricts to a morphism $\th'\colon \L\to \G'$ in $\csl_{0}$.
 Note that, viewing $\L$, $\G'$ as categories and $\th'$ as a functor, $\th'$ has a left adjoint $\th^{\bot}\colon \G'\to \L$. That is, $\th^{\bot }$ is a  functor (order preserving map) $\G'\to \L$ satisfying 
 \begin{equation}\label{eq4.7.2}
 \Hom_{\G'}(\g,\th'(\a))\cong \Hom_{\L}(\th^{\bot}(\g),\a)
 \end{equation} for all $\a\in \L$ and $\g\in \G'$.  In fact, $\th^{\bot}$ is uniquely determined by the corresponding map of objects, which
 is given by 
 \begin{equation}\label{eq4.7.3}
 \th^{\bot}(\g):=\meet\mset{\a\in \L\mid \g\leq \th(\a)}=
 \min(\mset{\a\in \L\mid \g\leq \th(\a)})
 \end{equation}  for $\g\in \G'$. One has
 \begin{equation}\label{eq4.7.4}
 \g\leq \th(\a) \iff \th^{\bot }(\g)\leq \a
 \end{equation} for all $\g\in \G'$ and $\a\in \L$. 
 As a left adjoint, $\th^{\perp}$ preserves colimits, such as coproducts. Hence  its underlying map of objects $\G'\to \L$ preserves those joins  which exist in $\G'$, and  is order-preserving.
 
Henceforward, for any morphism $\th$ in $\csl_{0}$, $\th^{\bot}$ will be identified with its corresponding map of objects,
unless otherwise specified i.e. it will regarded as 
a function $\th^{\bot}\colon \G'\to \L$ where $\G':=\domn(\th^{\bot})$.
 Informally, it is often convenient to regard $\th^{\bot}$ as a partially defined, join preserving map $\G\to \L$, defined
only at elements of the order ideal of $\G$ generated by the image of $\th$.
  The following Lemma, which  is a trivial variant of   well known facts about adjoints of composite functors, implies  that $\th\mapsto \th^{\bot}$ gives a contravariant  functor to a  suitable category of complete meet semilattices with partially defined, join preserving maps.
  \begin{lem*} Let $\th_{i}\colon \L_{i-1}\to \L_{i}$ be morphisms in $\csl_{0}$ for $i=1, 2$. Let $\th:=\th_{2}\th_{1}\colon \L_{0}\to \L_{2}$.  Then  $ \domn(\th^{\bot})=\mset{\g\in \domn(\th_{2}^{\bot})\mid \th_{2}^{\bot}(\g)\in \domn(\th_{1}^{\bot})}$
  and  for 
  $\g\in \domn(\th^{\bot})$, one has $(\th_{2}\th_{1})^{\bot} (\g)= \th^{\bot}_{1}(\th^{\bot}_{2}(\g))$.
  \end{lem*}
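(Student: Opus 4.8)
The plan is to deduce both assertions from the single adjunction characterization \eqref{eq4.7.4}, namely $\g\le\th(\a)\iff\th^{\bot}(\g)\le\a$, which holds for any morphism $\th$ in $\csl_{0}$, with $\g$ ranging over $\domn(\th^{\bot})$ (the order ideal generated by the image of $\th$) and $\a$ over the domain of $\th$. First I would note that $\th=\th_{2}\th_{1}$ is itself a morphism in $\csl_{0}$, since $\csl_{0}$ is closed under composition of its morphisms; hence \eqref{eq4.7.4} is available for each of $\th_{1}$, $\th_{2}$ and $\th$, and $\domn(\th^{\bot})$ is the order ideal of $\L_{2}$ generated by the image of $\th_{2}\th_{1}$.

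For the domain equality I would argue by double inclusion. If $\g\in\domn(\th^{\bot})$, choose $\a\in\L_{0}$ with $\g\le\th_{2}(\th_{1}(\a))$; setting $\b:=\th_{1}(\a)\in\L_{1}$ shows $\g\le\th_{2}(\b)$, so $\g\in\domn(\th_{2}^{\bot})$, and then \eqref{eq4.7.4} applied to $\th_{2}$ gives $\th_{2}^{\bot}(\g)\le\th_{1}(\a)$, whence $\th_{2}^{\bot}(\g)\in\domn(\th_{1}^{\bot})$. Conversely, if $\g\in\domn(\th_{2}^{\bot})$ with $\th_{2}^{\bot}(\g)\in\domn(\th_{1}^{\bot})$, pick $\a\in\L_{0}$ with $\th_{2}^{\bot}(\g)\le\th_{1}(\a)$; applying \eqref{eq4.7.4} to $\th_{2}$ in the reverse direction yields $\g\le\th_{2}(\th_{1}(\a))=\th(\a)$, so $\g\in\domn(\th^{\bot})$. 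This gives the asserted description of $\domn(\th^{\bot})$.

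Granting the domain equality, for $\g\in\domn(\th^{\bot})$ the element $\th_{1}^{\bot}(\th_{2}^{\bot}(\g))$ is defined and lies in $\L_{0}$, as does $\th^{\bot}(\g)$. I would then chain the three instances of \eqref{eq4.7.4}: for every $\a\in\L_{0}$,
$$\th^{\bot}(\g)\le\a\iff\g\le\th_{2}(\th_{1}(\a))\iff\th_{2}^{\bot}(\g)\le\th_{1}(\a)\iff\th_{1}^{\bot}(\th_{2}^{\bot}(\g))\le\a,$$
each application being legitimate precisely because the domain equality places $\g$ in $\domn(\th_{2}^{\bot})$ and $\th_{2}^{\bot}(\g)$ in $\domn(\th_{1}^{\bot})$. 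Since $\th^{\bot}(\g)\le\a\iff\th_{1}^{\bot}(\th_{2}^{\bot}(\g))\le\a$ for all $\a$, taking $\a$ to be each of the two sides in turn and invoking antisymmetry of $\le$ gives $\th^{\bot}(\g)=\th_{1}^{\bot}(\th_{2}^{\bot}(\g))$. The only real care required — and the reason for establishing the domain equality first — is to keep each use of \eqref{eq4.7.4} inside its region of validity; the argument is otherwise the routine \emph{composite of adjoints} computation.
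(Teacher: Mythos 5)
Your proof is correct and takes essentially the same route as the paper's: the paper's entire proof consists of the chain of three equivalences you display, stated ``assuming all terms involved are defined,'' with the remaining details omitted. Your double-inclusion verification of the domain equality supplies exactly those omitted details, so your write-up is a more complete version of the same argument.
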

\begin{proof}  For $\g\in \L_{2}$ and $\a\in \L_{0}$, one has \begin{multline*}
 (\th_{2}\th_{1})^{\bot} (\g)\leq \a \iff\g\leq  \th_{2}(\th_{1}  (\a))
\iff \th_{2 }^{\bot}(\g)\leq \th_{1}(\a)\iff  \th_{1}^{\bot}(\th_{2}^{\bot}(\g))\leq \a
\end{multline*} assuming all terms involved are defined. 
The remaining details are omitted.
\end{proof}

  \subsection{Categories of rootoids}\label{ss4.8} 
The categories $\rtd$ and $\rlec$ of rootoids defined below are those of primary concern. They are  defined  as  subcategories of an auxiliary  category $\rootc$. 
The category $\rtd$ (resp,  $\rlec$) is called the category of \emph{rootoids} (resp., of  \emph{rootoid local embeddings}).
The term ``morphism of rootoids'' (resp., ``local embedding of rootoids'') refer to a morphism of $\rtd$ (resp., $\rlec$) unless otherwise specified.
  
 \begin{defn*}\begin{num}\item  The category $\rootc$   is the following  subcategory of $\prootc$.   Its objects are  rootoids
 $(G,L,N)$. For any object $(G,L,N)$ of $\rootc$ and each $a\in \ob(G)$,  regard   $(\lsub{a}{G}, \lsub{a}{\leq  })$ as an object of $\csl_{0}$.  Morphisms in $\rootc$ are  
  morphisms  $(\alpha,\mu)\colon (G,\L,N)\to(G',\L',N')$ in $\prootc$ such that for each $a\in \ob(G)$, the map $\lsub{a}{\a}\colon \lsub{a}{G}\to \lsub{\a(a)}{G}'  $  induced by $\a$ is a morphism in $\csl_{0}$. 
  \item The category $\rtd$ of rootoids is the following subcategory of $\rootc$. It has the same objects as $\rootc$, and  a  morphism
   $(\alpha,\mu)\colon (G,\L,N)\to(G',\L',N')$ in $\rootc$ is a morphism in  $\rtd$ if and only if  for all $a\in \ob(G)$,
   $g\in \lsub{a}{G}$ and $g'\in \lsub{\a(a)}{G}'  $ with
   $g'$ and $\lsub{a}{\a}(g)$ orthogonal in $G'$ (i.e. $N'(\lsub{a}{\a}(g))\cap N'_{g'}=\eset$ in $\lsub{\a(a)}{\L}'  $) and with
   $g'\in\domn(\lsub{a}{\a}^{\bot})$, one has 
   $g$ and $\lsub{a}{\a}^{\bot}(g')$  orthogonal in $G$ (i.e.
   $N(g)\cap N(\lsub{a}{\a}^{\bot} (g'))=\eset$ in $\lsub{a}{\L}$).
   \item The category $\rlec$ of local embeddings of rootoids is the subcategory of $\rtd$ with all objects, and morphisms  $(\alpha,\mu)\colon (G,\L,N)\to(G',\L',N')$ in $\rtd$ such that for each $a\in \ob(G)$, the map $\lsub{a}{\a}\colon \lsub{a}{G}\to \lsub{\a(a)}{G}'  $  in $\csl_{0}$ induced by $\a$ is injective and its image is a join-closed meet subsemilattice of  $\lsub{\a(a)}{G}'  $.  \end{num}
      \end{defn*}
  It is straightforward to check that $\rootc$, $\rtd$ and $\rlec$ are categories;     the fact that composites of morphisms   in $\rtd$  are again morphisms in $\rtd$ is a consequence of Lemma \ref{ss4.7} and the definitions.
  
  The additional condition required in (b) for a morphism $f=(\a,\mu)$ of 
  $\rootc$ to be a morphism in $\rtd$ will be called the \emph{adjunction orthogonality property} or \emph{AOP} for short. Thus, a morphism in $\rtd$ is a morphism in $\rootc$ satisfying AOP.
  Notice that  whether a morphism $f=(\a,\mu)$ in $\prootc$ is a morphism in $\rootc$, $\rtd$ or $\rlec$ depends only on the underlying morphism $\a=\mathfrak{P}(f)$ of groupoid-preorders.

 \subsection{Coverings} \label{ss4.9}
 The following Lemma lists  useful properties of covering morphisms. 
 \begin{lem*} Let $f=(\alpha,\nu)\colon \CR'\to \CR$ be a covering morphism of protorootoids. Write $\CR=(G,\L,N)$ and  $\CR' =(H,\L',N')$.  Let $A:={A}_{\CR}$, $A':={A}_{\CR'}$,  
 $S:={A}_{\CR}$, $S':={A}_{\CR'}$ denote the 
 sets of atomic  and simple morphisms of $\CR$ and $\CR'$.
 
\begin{num}\item The weak order of $\CR'$ at an object $a$ of $H$ identifies naturally with the weak order of $\CR$ at the object $\a(a)$ of $G$.
\item For $a\in\ob(G)$, one has  $\lsub{a}{A}'  =\mset{s\in \lsub{a}{H}\mid  \a(s)\in \lsub{\a(a)}{A}}$ and  
$\lsub{a}{S}'  =\mset{s\in \lsub{a}{H}\mid  \a(s)\in \lsub{\a(a)}{S}}$.
If $A$ (resp., $S$) generates $G$, then $A'$ (resp., $S'$) generates $H$; the converses hold if $f$ is a covering quotient morphism.
 \item If $\CR$ is a (faithful, complete, interval finite, cocycle finite, preprincipal, principal, pseudoprincipal, regular or rootoidal)  protorootoid, then so is $\CR'$; the converses hold if  $f$ is a covering quotient morphism.
\item If $\CR$ is a rootoid then $f$ is a morphism in $\rtd$ and in $\rlec$.\end{num} \end{lem*}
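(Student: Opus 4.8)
The plan is to reduce everything to a single structural observation made at the outset. For each object $a$ of $H$ the component $\nu_a\colon \L'(a)\to\L(\a(a))$ is an isomorphism of Boolean rings (since $\nu$ is a natural isomorphism), and since $\nu N'=N\a$ gives $\nu_a(N'(g))=N(\a(g))$ for $g\in\lsub{a}{H}$ while $\lsub{a}{\a}\colon\lsub{a}{H}\to\lsub{\a(a)}{G}$ is a bijection (the covering condition, see \ref{ss1.12}), $\nu_a$ carries $\lsub{a}{\CL}'$ bijectively onto $\lsub{\a(a)}{\CL}$. Thus $\nu_a$ is an isomorphism of protomeshes $(\lsub{a}{\L}',\lsub{a}{\CL}')\xrightarrow{\cong}(\lsub{\a(a)}{\L},\lsub{\a(a)}{\CL})$, which is exactly (a); moreover, because $\nu_a$ preserves and reflects the Boolean order, $\lsub{a}{\a}$ is an order isomorphism of weak preorders $(\lsub{a}{H},\lsub{a}{\leq})\xrightarrow{\cong}(\lsub{\a(a)}{G},\lsub{\a(a)}{\leq})$. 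I would record these two facts first and use them throughout.

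For (b), a ring (resp.\ protomesh) isomorphism sends atoms of $\lsub{a}{\L}'$ (resp.\ of $\lsub{a}{\CL}'$) to atoms of $\lsub{\a(a)}{\L}$ (resp.\ of $\lsub{\a(a)}{\CL}$), so the descriptions of $\lsub{a}{S}'$ and $\lsub{a}{A}'$ are immediate. The substantive point is the generation statement, where the weak-order isomorphism does not suffice and the covering structure must be used directly. If $A$ generates $G$, I would lift expressions one morphism at a time: given $g\in\lsub{a}{H}$, write $\a(g)=t_1\cdots t_n$ with $t_i\in A$ (recall $A=A^{*}$ by \ref{ss3.4}(a)), then use bijectivity of the stars to lift $t_1,\dots,t_n$ successively to a composable chain $s_1,\dots,s_n$ in $H$ with $\a(s_i)=t_i$; each $s_i\in A'$ by the description just proved, and injectivity of the relevant star forces $s_1\cdots s_n=g$. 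Hence $A'$ generates $H$, and the same argument applies to $S$. For the converses I would use that a covering quotient morphism is surjective on objects, hence (with star-bijectivity) on morphisms: any morphism of $G$ lifts to $H$, is written in $A'$ (resp.\ $S'$), and pushed back down to an expression in $A$ (resp.\ $S$).

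For (c) I would observe that every property on the list is \emph{star-local}: it holds for a protorootoid exactly when a condition depending only on the protomesh $(\lsub{a}{\L},\lsub{a}{\CL})$ holds at each star. This is immediate from the definitions for faithful, complete, interval finite, cocycle finite, pseudoprincipal, regular, and (via orthogonality of morphisms) rootoidal; preprincipal is star-local because $\lsub{a}{A}$ is determined by the weak order at $a$; and for principal I would invoke \ref{ss3.12}(a) to rewrite it as ``preprincipal and saturated,'' both star-local. By the protomesh isomorphisms of the opening paragraph, each star of $H$ carries the same local data as the corresponding star of $G$, giving every forward implication. For the converses, surjectivity on objects of a covering quotient guarantees that every star of $G$ arises from a star of $H$, so a property holding at all stars of $H$ holds at all stars of $G$.

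Finally, (d) is a formal consequence. By (c) both $\CR$ and $\CR'$ are rootoids, and by the opening paragraph each $\lsub{a}{\a}$ is an order isomorphism onto $\lsub{\a(a)}{G}$ between complete meet semilattices, hence a morphism and (being bijective) an isomorphism in $\csl_{0}$ (see \ref{ss4.6}); thus $f\in\rootc$. Its left adjoint $\lsub{a}{\a}^{\bot}$ is then the inverse $(\lsub{a}{\a})^{-1}$, defined on all of $\lsub{\a(a)}{G}$, and AOP reduces to the fact that $\nu_a$ reflects orthogonality: if $N(g')\cap N(\lsub{a}{\a}(g))=\eset$, applying $\nu_a^{-1}$ yields $N'(\lsub{a}{\a}^{\bot}(g'))\cap N'(g)=\eset$, so $f\in\rtd$. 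Since $\lsub{a}{\a}$ is bijective its image is all of $\lsub{\a(a)}{G}$, trivially a join-closed meet subsemilattice, whence $f\in\rlec$. The main obstacle throughout is the generation argument in (b): it is the one place where the bare weak-order isomorphism is inadequate and one must exploit the unique path-lifting property of coverings (\cite{Hig}).
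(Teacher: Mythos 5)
Your proof is correct and shares the paper's skeleton: record once that each $\nu_{a}$ is an isomorphism of protomeshes $(\lsub{a}{\L}',\lsub{a}{\CL}')\cong(\lsub{\a(a)}{\L},\lsub{\a(a)}{\CL})$ compatible with the star bijections $\lsub{a}{\a}$, then transfer everything star by star. The genuine differences are in (b)--(c). The paper runs both parts through length identities deduced from (a): first $l_{N'}(h)=l_{N}(\a(h))$, hence $S'=\mset{s'\in\mor(H)\mid \a(s')\in S}$, and then $l_{S'}(h)=l_{S}(\a(h))$ once $S$ generates $G$; part (c), in particular the transfer of principality (cocycle finite, simply generated, $l_{S}=l_{N}$), is then read off from these formulae. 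You instead obtain the descriptions of $A'$ and $S'$ from atom preservation under isomorphisms and dispose of principality by invoking Lemma \ref{ss3.12}(a) (principal $=$ preprincipal $+$ saturated), so that every property in (c) becomes a conjunction of star-local conditions; this buys independence from length functions, whereas the paper's route produces the length identities as reusable facts about coverings in their own right. Your explicit unique-lifting argument for the generation claim (using $A=A^{*}$ and successive star bijections) fills in what the paper compresses into ``using the definition of covering morphism'', and your AOP verification --- apply $\nu_{a}^{-1}$ to the Boolean identity, using that the adjoint of an order isomorphism is its inverse --- is an equivalent substitute for the paper's, which rewrites orthogonality in terms of weak preorder and applies $\lsub{b}{\a}^{-1}$. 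One caveat on wording in (c): faithfulness is \emph{not} a condition ``depending only on the protomesh'': a group of order two with zero cocycle and the trivial group yield the same protomesh $(\bbF_{2},\set{0})$, yet only the latter is faithful. What makes faithfulness transfer is the star bijection intertwining the cocycles (equivalently, your weak-preorder isomorphisms, via Lemma \ref{ss2.5}(iv)); since you record exactly these facts at the outset and use them throughout, this is an imprecision of phrasing rather than a gap, but the ``star-local'' principle should be stated as locality in the triple (star, protomesh, cocycle map), not in the protomesh alone.
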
 
\begin{rem*} Note that it follows from (d)  that 
an isomorphism in $\prootc$ between two rootoids
is an isomorphism in $\rtd$ and  $\rlec$.\end{rem*} 
\begin{proof} Let $\CL'$ denote the big weak order of $\CR'$. The map $\nu_{a}$ gives an isomorphism  $\lsub{a}{\L}'\xrightarrow{\cong}\lsub{\a(a)}{\L}$ of Boolean rings such that  for $h\in \lsub{a}{H}$, $\nu_{a}(N'(h))=N(\a(h))$. Since the map
$h\mapsto \a(h)\colon \lsub{a}{H}\to \lsub{\a(a)}{G}$ is bijective by definition of covering morphism, 
 the definitions imply, slightly more strongly than (a),  that $\nu_{a}$ induces an isomorphism of protomeshes $(\lsub{a}{\L}'  ,\lsub{a}{\CL}'  )\cong (\lsub{\a(a)}{\L},\lsub{\a(a)}{\CL})$.  
 
  We prove the part of (b) concerning  simple morphisms.   Note first that
 (a)  and its proof  imply that
 \begin{equation}\label{eqa4.9.1} l_{N'}(h)=l_{N}(\a( h)) \text{ \rm for all $h\in \mor(H)$}.\end{equation} Hence  \begin{equation}\label{eqa4.9.2} S'=\mset{s'\in \mor(H)\mid \a(s')\in S}.\end{equation}
 Using the definition of covering morphism, it follows  that if $S$ generates $G$, then $S'$ generates $H$, and conversely if $\ob(H)\to \ob(G)$ is surjective. Assume that $S$ generates $G$.
  A simple argument using \eqref{eqa4.9.2} and the definition of covering morphism 
  shows that \begin{equation}\label{eqa4.9.3} l_{S'}(h)=l_{S}(\a(h)) \text{ \rm  for all $h\in \mor(H)$}.\end{equation} 
   The desired 
  conclusions in (b) involving simple morphisms follows readily from these facts and the definitions. The proof of the parts of  (b) involving atomic morphisms are similar using 
  \begin{equation}\label{eqa4.9.4} A'=\mset{s'\in \mor(H)\mid \a(s')\in A}\end{equation}
and, when $A$ generates $G$,   \begin{equation}\label{eq4.9.5} l_{A'}(h)=l_{A}(\a(h)) \text{ \rm  for all $h\in \mor(H)$.}\end{equation} 
Part (c) follows easily from (a)--(b) and the above formulae,   by the definitions.

Now for (d). There are  inclusion functors \begin{equation}\label{eqa4.9.6}
   \xymatrix{{\rlec}\ar[r]&{\rtd}\ar[r]&{\rootc}\ar[r]&{\fprc}\ar[r] &{\prootc.}   }
   \end{equation} 
    It will be  shown that $f$ is a morphism  in each of these categories, working from right to left. By assumption, 
    $f$ is a morphism is $\prootc$
       Next,  $f$ is an morphism is $\fprc$, since $\CR'$ is faithful by (c) and  $\fprc$ is a full subcategory of $\prootc$.
       Also by (c), $\CR'$ is a rootoid since $\CR$ is a rootoid.
       For any $a\in \ob(H)$, the map $\lsub{a}{\a}\colon \lsub{a}{H}\to \lsub{\a(a)}{G}$ is an order isomorphism by (a). Since 
       $ \lsub{\a(a)}{G}$ is in $\csl_{0}$, it follows that 
       $ \lsub{a}{\a}$ is an isomorphism in $\csl_{0}$, and hence
       $f$ is a morphism in $\rootc$; further, if $f$ is a morphism
       in $\rtd$, it is a morphism in $\rlec$. To see that $f$ is in $\rtd$, we must verify AOP.
        Let $a,b\in \ob(H)$,
   $h\in \lrsub{a}{H}{b}$ and $g\in \lsub{\a(a)}{G}$ with
   $g$ and $\lsub{a}{\a}(h)$ orthogonal in $G$
   i.e.  $(\a_{a}(h))^{-1}\lsub{\a(b)}{\leq  }(\a_{a}(h))^{-1}g$. Now    $\lsub{b}{\a}$ and      $\lsub{a}{\a}$ are  order isomorphisms. Applying $\lsub{b}{\a}^{-1}$ to the last equation gives 
   $h^{-1}\lsub{b}{\leq  }h^{-1}g'$ where 
   $g'=(\lsub{a}{\a})^{-1}(g)=\lsub{a}{\a}^{\perp}(g)$.
    Hence $h$ and $g'$ are orthogonal in $G$ as 
   required.  This completes the proof of (d).
       \end{proof}

\section{Root systems and set  protorootoids}\label{sa5}
\subsection{Signed sets}\label{ssa5.1}  The \emph{sign group} $\set{\pm}$ is the group of order two with elements 
$\set{+,-}$
and identity element $+$. Sometimes it is  identified  with the group $\set{\pm 1}$ of units of the ring
 $\Int$ of integers.

An action of a  group $H$ on a set $\Theta$ is said to be \emph{free} if the 
stabilizers in $H$ of all elements  of  $\Theta$ are trivial. An \emph{indefinitely signed 
set} is defined to be  a set $\Theta$ together with a free left  action of the sign group. 
A \emph{definitely signed set} is a pair $(\Th,\Th_{+})$ of an indefinitely signed set $\Theta$
together with a specified set $\Theta_{+}\seq \Theta$ of $\set{\pm}$-orbit 
representatives on $\Theta$.
Then $\Theta=\Theta_{+}\dotcup \Theta_{-}$ where $\Theta_{-}:=-\Theta_{+}=\mset{-s\mid s\in \Theta_{+}}$.

Let $\setc $ (resp.,  $\setc _{\set{\pm}}$, $\setc _{\pm}$) denote the categories of sets (resp.,
indefinitely signed sets, resp., definitely signed sets) with functions (resp.,
$\set{\pm }$-equivariant functions,  $\set{\pm }$-equivariant functions) as morphisms. 

Note especially that it is  not required that  a 
morphism $\Theta\to \Theta'$ in $\setc _{\pm}$  be \emph{positivity preserving} i.e. that it induce  a  map 
$\Theta_{+}\to \Theta'_{+}$ (and therefore map  $\Theta_{-}$ to $\Theta'_{-}$). The subcategory of $\setc _{\pm}$
consisting of all its objects but with only positivity preserving morphisms will be denoted $\setc _{+,-}$. There is an  
equivalence of categories  $\setc _{+,-}\xrightarrow{\cong}\setc $ given on objects by $\Theta\mapsto \Theta_{+}$.

 \subsection{Signed groupoid-sets}\label{ssa5.2}
  The category $\grpdset$ has as its  objects the pairs $R=(G,\Phi)$ where $G$ is a  groupoid and $\Phi\colon G\to \setc_{\pm}$ is a 
functor.  Its objects are called \emph{signed groupoid-sets}. 
A morphism $(G,\Phi)\to (H,\Psi)$ is defined to be  a pair
$(\a,\nu)$ where $\a\colon G\to H$ is a functor and $\nu\colon \Psi\a\to \Phi$ is a natural transformation of functors $G\to \setc_{\pm}$ such that for each $a\in \ob(G)$, the component 
$\nu_{a}\colon \Psi(\a(a))\to \Phi({a})$ is \emph{positivity preserving} i.e. each component is a morphism in the subcategory $\setc_{+,-}$ of $\setc_{\pm}$. For another morphism $(\b,\mu)\colon (H,\Psi)\to (K,\L)$, the composite $(\b,\mu)(\a,\nu)\colon (G,\Phi)\to (K,\L)$ is defined by  $(\b,\mu)(\a,\nu)=(\b\a,\nu (\mu\a))$ where the component of $\nu (\mu\a)$ at $a\in \ob(G)$ is  $(\nu (\mu\a))_{a}=\nu_{a}\mu_{\a(a)}$.

For fixed $G$, the subcategory $G\text{\rm -}\setc_{\pm}$ of 
$\grpdset$ containing   objects $(G,\Phi)$ and morphisms $(\Id_{G},\nu)$ is called the category of signed $G$-sets.  

The functor $\Phi$ is called the \emph{root system} of a signed groupoid-set  $(G,\Phi)$.  It may sometimes  be regarded  as a family of definitely signed sets $(\lsub{a}{\Phi})_{a\in \ob(G)}$ (or, if $G$ is a group,   as the unique signed set in that family) with
action maps $\lrsub{a}{G}{b}\times \lsub{b}{\Phi}\to \lsub{a}{\Phi}$ satisfying suitable conditions (associativity, inverse and unit axioms) similar to those for $H$-sets for a group $H$. 

 An element   of  $\lsub{a}{\Phi}$ (resp.,
$\lrsub{a}{\Phi}{+}$,   $\lrsub{a}{\Phi}{-}$)
 for some $a\in \ob(G)$, is called   a  \emph{root} (resp.,  \emph{positive root}, \emph{negative root}) of $R$ or $\Phi$.
  
\begin{rem*} (1) Note that  
morphisms of signed sets induced by the action of groupoid 
elements  in a signed groupoid-set are \emph{not} positivity preserving in general.

(2) 
There is also a (different)  category $\grpdset'$  with  signed groupoid-sets as objects  in which a morphism $(G,\Phi)\to (H,\Psi)$ is a pair $(\a,\nu)$ where $\a\colon G\to H$ is a functor and $\nu\colon \Phi\to \Psi\a$ is a natural transformation with positivity preserving components. Morphisms in this latter category induce  commutative diagrams of  action maps
\begin{equation}\label{eqa5.2.1}
\xymatrix{
{\lrsub{a}{G}{b}\times \lsub{b}\Phi}\ar[r]\ar[d]&\lsub{a}{\Phi}\ar[d]\\
{\lsub{\a(a)}{H}_{\a(b)}\times \lsub{\a(b)}\Psi}\ar[r]&\lsub{\a(a)}{\Psi}
}
\end{equation}  However, it is $\grpdset$     in which the usual product (see \cite{Bour})
 \begin{equation} \label{eqa5.2.2}(W_{1},\Phi_{1})\times (W_{2},\Phi_{2})=(W_{1}\times W_{2},\Phi_{1}\coprod \Phi_{2})\end{equation} of root systems of Coxeter groups may be interpreted as a categorical product, and 
which is related to the category $\prootc$;  $\grpdset'$ is similarly related to $\prootcp$.  
 \end{rem*}

  \subsection{}\label{ssa5.3} The  category of set protorootoids defined below provides a convenient bridge between the categories $\prootc$ of  
 protorootoids and $\grpdset$  of signed groupoid-sets.
   \begin{defn*}\begin{num}\item   A set protorootoid  is  defined to be  a  triple  $(G,\L,N)$ such that $G$ is a groupoid, $\L\colon G\to \setc $ is a representation of $G$ in $\setc $ and $N$ is a $G$-cocycle for $\wp _{G}(\L)$.
 \item A set protorootoid $(G,\L,N)$ is called a \emph{set rootoid} if 
 $(G,\wp _{G}(\L),N)$ is a rootoid.
\item  The category $\setproot$  has  set protorootoids
 $(G,\L,N)$  as objects.  A morphism $(G,\L,N)\to (H,\G,M)$
 in $\setproot$ 
 is a pair $(\a,\nu)$ consisting of a groupoid homomorphism $\a\colon G\to H $ and a natural transformation $\nu\colon \G\a\to \L$   such that   for any $g\in \lsub{a}{G}$, $M_{\a(g)}=\nu_{a}^{-1}(N_g):=\mset{p\in \lsub{\a(a)}{\G}\mid \nu_{a}(p)\in N_{g}}$.
 Composition of morphisms is given by $(\b,\mu)(\a,\nu)=(\b\a,\nu(\mu\a))$.\end{num}\end{defn*}
  \subsection{} \label{ssa5.4} There is   a  functor 
$\mathfrak{I}\colon \setproot\to \prootc$ as follows.
 Directly from the definitions,   if  $\CR=(G,\L,n)$ is a set protorootoid, then
   $\mathfrak{I}(\CR):=(G,\wp_{G}(\L),N)$ is a protorootoid. Further, if 
   $(\a,\nu)\colon (G,\L,N)\to (H,\G,M)$ is a morphism of set 
   protorootoids, then \begin{equation*} (\a,\wp_{G}(\nu))\colon (G,\wp_{G}(\L),N)\to (H,\wp_{H}(\G),M)\end{equation*} is a morphism in 
   $\prootc$. To see this, note that in \ref{ssa5.3}(c), $\nu_{a}^{-1}(N_{g})=(\wp_{G}(\nu))_{a}(N_{g})$ and that 
   $\wp_{G}(\G\a)=\wp \G\a \iota_{G}=\wp \G\iota_{H}\a=\wp_{H}(\G)\a$.  This defines $\mathfrak{I}$ on objects and morphisms. 
   Using $\wp_{G}(\nu\a)=\wp_{G}(\nu)\a$,  one checks $\mathfrak{I}$ is a functor as claimed. Note that $\mathfrak{I}$ is faithful since $\wp$ is faithful. 
   
    \begin{prop*}  Let $\CR$ be a principal protorootoid. Then there is a set protorootoid $\CT$ such that $\ab(\mathfrak{I}(\CT))\cong \ab(\CR)$.\end{prop*}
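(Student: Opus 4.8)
The plan is to keep the same underlying groupoid, taking $\CT=(G,\L_0,N_0)$ where $G$ is the groupoid of $\CR=(G,\L,N)$ and $\L_0$ is the $G$-set assembled from the atoms that occur in the cocycle values of $\CR$. Since $\CR$ is principal it is in particular cocycle finite, so for each $a\in\ob(G)$ every $N(g)$ with $g\in\lsub{a}{G}$ has finite rank in $\lsub{a}{\L}$. Writing $U_{a}$ for the set of atoms of $\lsub{a}{\L}$, Lemma \ref{ss3.2} gives a ring isomorphism $\theta_{a}$ from the subring $B'_{a}$ of finite-rank elements of $\lsub{a}{\L}$ onto $\wp'(U_{a})$, sending each $x$ to the set of atoms below it. As the abridged ring $\L'(a)$ is generated by the finite-rank elements $N(g)$, it is a subring of $B'_{a}$ and $\theta_{a}$ restricts to it. I would then set $V_{a}:=\mset{u\in U_{a}\mid u\seq N(g)\text{ for some }g\in\lsub{a}{G}}$, the atoms appearing below some cocycle value.

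First I would check that $V:=(V_{a})_{a\in\ob(G)}$ is a sub-$G$-set of the atoms, i.e.\ that each $\L(k)$ with $k\in\lrsub{b}{G}{a}$ restricts to a bijection $V_{a}\to V_{b}$. This is the first place real work is needed. If $u\seq N(x)$ is an atom then $\L(k)(u)$ is an atom below $\L(k)(N(x))=kN(x)$, and from the cocycle identity $N(kx)=N(k)+kN(x)$ one gets $kN(x)\seq N(kx)\cup N(k)$, so the atom $\L(k)(u)$ lies below $N(kx)$ or below $N(k)$; either way it belongs to $V_{b}$. Applying the same to $k^{*}$ shows the restriction is a bijection, so $\L_{0}(a):=V_{a}$ defines a functor $\L_{0}\colon G\to\setc$; the $V_{a}$ are automatically pairwise disjoint since the $\lsub{a}{\L}$ are (cf. \ref{ss1.7}, \ref{ss2.1}).

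Next I would define the candidate cocycle by $N_{0}(g):=\theta_{a}(N(g))$, the set of atoms below $N(g)$, for $g\in\lsub{a}{G}$; this lies in $\wp(V_{a})=\wp_{G}(\L_{0})(a)$ because every atom below $N(g)$ is in $V_{a}$ by definition. To see that $N_{0}\in Z^{1}(G,\wp_{G}(\L_{0}))$ I would use that each $\theta_{a}$ is a ring homomorphism together with the fact that, under $\wp_{G}(\L_{0})$, the action of $g$ on $\wp(V)$ is direct image along $\L(g)$. Thus $\theta_{a}(gN(h))=\L(g)(\theta_{c}(N(h)))=g\cdot N_{0}(h)$, and applying $\theta_{a}$ to $N(gh)=N(g)+gN(h)$ (see \eqref{eq1.13.2}) turns the Boolean-ring identity for $N$ into the required identity $N_{0}(gh)=N_{0}(g)+g\cdot N_{0}(h)$. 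This yields a set protorootoid $\CT=(G,\L_{0},N_{0})$, so that $\mathfrak{I}(\CT)=(G,\wp_{G}(\L_{0}),N_{0})$.

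Finally I would identify the two abridgements. By construction $\ab(\CR)=(G,\L',N')$, while $\ab(\mathfrak{I}(\CT))=(G,P,N_{0}')$ where $P(a)$ is the subring of $\wp(V_{a})$ generated by the sets $N_{0}(g)=\theta_{a}(N(g))$. Since $\theta_{a}$ is a ring isomorphism carrying the generators $N(g)$ of $\L'(a)$ exactly to the generators of $P(a)$, it restricts to an isomorphism $\L'(a)\xrightarrow{\cong}P(a)$ with $\theta_{a}(N'(g))=N_{0}'(g)$. Naturality in $a$ is immediate, because both $\L'(k)$ and $P(k)$ are induced by $\L(k)$ and $\theta$ carries the atoms below $x$ to the atoms below $\L(k)(x)$. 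Hence $(\Id_{G},\theta)$ is an isomorphism in $\prootc$ from $\ab(\CR)$ to $\ab(\mathfrak{I}(\CT))$, as required. The main obstacle is the pair of verifications in the middle, namely the $G$-stability of $V$ and the cocycle condition for $N_{0}$; both reduce, through Lemma \ref{ss3.2} and the observation that an atom below a union (equivalently, a symmetric difference) lies below one of the summands, to the already-known cocycle identity for $N$. I would also note that only the cocycle finiteness of $\CR$, which is part of the definition of principal, is actually used.
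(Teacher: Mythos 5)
Your proof is correct, and it rests on the same two pillars as the paper's: Lemma \ref{ss3.2}, which realizes the abridged Boolean rings as rings of finite sets of atoms, and the transport of the cocycle $N$ across that identification. The difference is in how the underlying $G$-set of atoms is produced. The paper takes $\lsub{a}{\Phi}:=\mset{g(N_{s})\mid g\in \lrsub{a}{G}{b},\, s\in \lsub{b}{S}}$, the $G$-orbit of the cocycle values of the simple generators; $G$-stability of this family is then automatic (since $k$ carries $g(N_{s})$ to $(kg)(N_{s})$), and simple generation together with the cocycle identity shows these atoms generate $\L'(a)$. You instead take the atoms lying below some cocycle value, which forces you to prove $G$-stability by hand (your argument that a symmetric difference is contained in the union, so an atom below $kN(x)$ lies below $N(kx)$ or $N(k)$), but it lets the whole proof run on cocycle finiteness alone. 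Under principality the two atom sets coincide: each $g(N_{s})=N(g)+N(gs)$ is an atom below $N(g)$ or $N(gs)$, and conversely a reduced expression $g=s_{1}\cdots s_{n}$ is compatible by Lemma \ref{ss3.13}, so it writes $N(g)$ as the disjoint union of the atoms $s_{1}\cdots s_{i-1}(N_{s_{i}})$; hence the two constructions yield the same set protorootoid. What your version buys is exactly the remark you close with: the statement holds for every cocycle finite protorootoid, not just principal ones. What the paper's version buys is that the $G$-set $\Phi$ is visibly the orbit of the ``simple roots'' $N_{s}$, which matches the root-system picture this construction is designed to produce.
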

     \begin{proof}   Write  $\CR=(G,\L,N)$ and denote its simple 
     generators as  $S$ and abridgement $\CR^{a}$ as $(G,\L',N')$. The 
     cocycle condition implies  that for $a\in \ob(G)$, $\L'(a)$ is 
     generated as Boolean ring  by the elements $g(N_{s})$ for 
     $g\in \lrsub{a}{G}{b}$ and $s\in \lsub{b}{S}$.  Let 
     $\lsub{a}{\Phi}$ denote the set of all these elements.  
     Note that elements of $\lsub{a}{\Phi}$ are atoms of $\L(a)$, 
     hence of $\L'(a)$. Thus, $\lsub{a}{\Phi}$ is a set of orthogonal 
     idempotents generating  $\lsub{a}{\L}'$, which is therefore 
     isomorphic to the Boolean ring of finite subsets of $\lsub{a}{\Phi}$. 
     Note that there is a natural representation of $G$  on the sets 
     $\lsub{a}{\Phi}$, corresponding to a functor $\Phi\colon G\to \setc$ 
        with $\Phi(a)=\lsub{a}{\Phi}$. Clearly, $\L'$ is equivalent to the 
        subrepresentation $\wp'_{G}(\Phi)$  of $\wp_{G}(\Phi)$ such that 
         $\lsub{a}{(\wp'_{G}(\Phi))} $ is the set of all finite subsets of 
 $\lsub{a}{(\wp_{G}(\Phi))} $. Let 
 $M\colon G\to \dotcup_{a\in \ob(G)}\lsub{a}{(\wp_{G}(\Phi))} $ be 
 the function defined by $M(g):=\mset{x\in\lsub{a}{\Phi}\mid x\seq N'(g)}\in \lsub{a}{(\wp'_{G}(\Phi))} $.  Using  Lemma \ref{ss3.2}, $M$ is a $G$-cocycle for $\wp_{G}(\Phi)$. It is easy to see from this that $\CT:=(G,\Phi,M)$ is a set protorootoid with the  required property.
 \end{proof} 

   \begin{rem*} One version of Stone's theorem implies that any Boolean ring has a (canonical) realization as a ring of sets. It is easy to see from it that any protorootoid is  preorder isomorphic to a protorootoid in the image of $\mathfrak{I}$. 
  A more precise statement of this fact, and a generalization of the above proposition, will be proved in a 
  subsequent paper.\end{rem*}
 
  \subsection{}    \label{ssa5.5} The  following result  is a special case
  (involving only fibers $\set{\pm}$) of  facts about  the analogues of bundles  in the category of groupoid representations in the category of sets  (cf. \cite{DyRig} for related results for groups).   \begin{prop*} The categories $\setproot$ and $\grpdset$ are equivalent.
    \end{prop*}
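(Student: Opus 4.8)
The plan is to exhibit an explicit quasi-inverse pair of functors, rather than merely verifying full faithfulness and essential surjectivity, since the functors are canonical and their construction makes the equivalence transparent. Throughout, for a morphism $g\in \lrsub{a}{G}{b}$ of a signed groupoid-set I write $\Phi_{g}:=\lrsub{a}{\Phi}{+}\cap \Phi(g)(-\lrsub{b}{\Phi}{+})$, as in the introduction.

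First I would define the functor $F\colon \grpdset\to \setproot$ implementing the construction indicated in the introduction. On objects, $F$ sends $(G,\Phi)$ to $(G,\ol\Phi,M)$, where $\ol\Phi:=\Phi/\set{\pm}$ is the quotient representation of $G$ in $\setc$ (so that $\lsub{a}{\ol\Phi}=\lsub{a}{\Phi}/\set{\pm}$, using freeness of the sign action), and where $M(g)$ is the image of $\Phi_{g}$ under the bijection $\lrsub{a}{\Phi}{+}\xrightarrow{\cong}\lsub{a}{\ol\Phi}$. The one nontrivial point is that $M$ is a cocycle for $\wp_{G}(\ol\Phi)$, i.e. $M(gh)=M(g)+gM(h)$ with $+$ the Boolean symmetric difference; this reduces to the standard decomposition of the positive roots reversed by a composite, and is formally equivalent to the sign computation in the next paragraph. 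On morphisms, a map $(\alpha,\nu)\colon (G,\Phi)\to (H,\Psi)$ of $\grpdset$ (with $\nu\colon \Psi\alpha\to\Phi$ positivity preserving) descends, by positivity preservation, to $\bar\nu\colon \ol\Psi\alpha\to\ol\Phi$, and I would check that $(\alpha,\bar\nu)$ satisfies the cocycle compatibility of \ref{ssa5.3}(c); functoriality of $F$ is then routine.

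Next I would construct the candidate quasi-inverse $\Xi\colon \setproot\to \grpdset$ by the twisted doubling determined by the cocycle. Given $(G,\L,N)$, put $\lsub{a}{\Phi}:=\lsub{a}{\L}\times\set{\pm}$ with the free sign action on the second coordinate and $\lrsub{a}{\Phi}{+}:=\lsub{a}{\L}\times\set{+}$, and define the $G$-action for $g\in\lrsub{a}{G}{b}$ by $\Phi(g)(\lambda,\epsilon):=(g\lambda,\sigma_{g}(\lambda)\epsilon)$, where $\sigma_{g}(\lambda)=-$ exactly when $g\lambda\in N(g)\seq\lsub{a}{\L}$. The main verification is functoriality: the sign multiplicativity $\sigma_{gh}(\lambda)=\sigma_{g}(h\lambda)\,\sigma_{h}(\lambda)$ is precisely the translation, via $g\lambda\in gN(h)\iff \lambda\in N(h)$, of the identity $N(gh)=N(g)+gN(h)$ into symmetric differences, while $\sigma_{1_{a}}\equiv +$ because $N(1_{a})=\eset$. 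On a morphism $(\alpha,\nu)$ of $\setproot$ I would set $\tilde\nu(\gamma,\epsilon):=(\nu_{a}(\gamma),\epsilon)$, which is positivity preserving, and check its naturality against the twisted actions using exactly the compatibility $M_{\alpha(g)}=\nu_{a}^{-1}(N_{g})$.

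Finally I would produce natural isomorphisms $F\Xi\cong \Id_{\setproot}$ and $\Xi F\cong \Id_{\grpdset}$. For $F\Xi$, the projection $\lsub{a}{\L}\times\set{\pm}\to\lsub{a}{\L}$ identifies $\lsub{a}{\Phi}/\set{\pm}$ with $\lsub{a}{\L}$, and a short computation gives $\Phi_{g}=\set{(\mu,+)\mid \mu\in N(g)}$, so the recovered cocycle is $N$ itself. For $\Xi F$, I would define at each object $a$ the map $\lsub{a}{\Phi}\to(\lsub{a}{\Phi}/\set{\pm})\times\set{\pm}$ sending a root to the pair of its orbit and its sign; this is a positivity-preserving, sign-equivariant bijection, and the content lies in its $G$-equivariance, which unwinds to the statement that a positive root $\beta$ is sent by $g$ to a negative root exactly when the orbit of $\Phi(g)\beta$ lies in $M(g)$—which is how $M$ was defined. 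I expect this $G$-equivariance check to be the main obstacle: it is the single place where the sign bookkeeping of the twisted action must be matched on the nose against the definition of $M$ in terms of $\Phi_{g}$, and it must be verified with the positive representative taken in each of the two possible sign cases. Naturality of both isomorphisms, together with the action of the composite functors on morphisms, then follows directly from the constructions.
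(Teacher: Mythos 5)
Your proposal is correct and takes essentially the same route as the paper: your $F$ and $\Xi$ are precisely the paper's functors $\mathfrak{L}$ (orbit space $\lsub{a}{\L}:=\lsub{a}{\Phi}/\set{\pm}$ with cocycle $\pi_{a}(\Phi_{g})$) and $\mathfrak{K}$ (the twisted doubling $\lsub{a}{\L}\times\set{\pm}$), your sign rule ``$g\lambda\in N(g)$'' being equivalent to the paper's ``$\lambda\in N(g^{*})$'' because $N(g^{*})=g^{*}N(g)$. The only difference is one of detail: you write out the quasi-inverse natural isomorphisms $F\Xi\cong\Id$ and $\Xi F\cong\Id$ and isolate the $G$-equivariance of the map sending a root to (orbit, sign) as the key check, whereas the paper leaves these verifications implicit.
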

\begin{proof} We first construct a functor $\mathfrak{K}\colon 
\setproot \to\grpdset$. Let $(G,\L,N)$ be a set protorootoid. 
Define a functor $\Phi\colon G\to\setc_{\pm}$ as follows.
For $a\in \ob(G)$, let $\Phi(a)=\lsub{a}{\Phi}:=\lsub{a}{\L}\times \set{\pm}$.  Regard it as a  definitely signed set
with $\set{\pm }$ action, which we write as $(\epsilon,\a)\mapsto \e\a$, by multiplication on the second factor
and  with $\lrsub{a}{\gp \Phi}{\e}:=\lsub{a}{\gp \L}\times\set{\e}$ for $\e\in \set{\pm}$
 and $a\in \ob(G)$. 
For $a\in \ob(G)$, $g\in \lrsub{a}{G}{b}$, $x\in\lsub{b}{\L}$  and $\e\in \set{\pm}$, set
\begin{equation} \label{eqa5.5.1}
\Phi(g)(x,\e):=\bigl((\L(g))(x),\e\e'\bigr), \quad \e'=\e'_{g,x}:=\begin{cases}-,&x\in N(g^{*})\\
                                   +,&x\not\in N(g^{*}).\end{cases}\end{equation}
One can check this defines a signed groupoid-set  $(G,\Phi)$, and we set $\mathfrak{K}(G,\L,N)=(G,\Phi)$.
Next, suppose given a morphism $(\a,\nu)\colon (G,\L,N)\to (H,\G,M)$. Write $\mathfrak{K}(H,\G,M)=(H,\Psi)$.
Define a natural transformation  $\nu'\colon \Psi\a\to \Phi$ which 
has component $\nu'_{a}\colon \Psi\a(a)\to \Phi(a)$ at $a$ equal to the map \begin{equation*} \nu_{a}\times \Id_{\set{\pm}}\colon \lsub{\a(a)}\G\times\set{\pm }\to \lsub{a}{\L}\times \set{\pm}.\end{equation*} Using  \eqref{eqa5.5.1}, it can be checked  that $(\a,\nu')\colon (G,\Phi)\to (H,\Psi)$ is a morphism  in $\grpdset$ and that setting $\mathfrak{K}(\a,\nu)=(\a,\nu')$ defines a functor $\mathfrak{K}$ as required.

A functor $\mathfrak{L}\colon\grpdset  \to \setproot $ defining an inverse equivalence may be constructed as follows.  Suppose that  $(G,\Phi)$ is a signed 
groupoid-set.
For $a\in \ob(G)$, let $\lsub{a}{\L}:=(\lsub{a}\Phi)/\set{\pm 1}$ be the $\set{\pm}$-orbit space. Since morphisms of signed $G$-sets are $\set{\pm}$-equivariant, 
there is a natural  functor $\L\colon G\to \setc$ induced by $\Phi$  with  $\L(a)=\lsub{a}{\L}$ for all $a\in \ob(G)$.
Let $\pi_{a}\colon \lsub{a}\Phi\to\lsub{a}{\L} $ be the orbit map
$\a\mapsto \set{\pm \a}$.
For $g\in \lrsub{a}{G}{b}$, define
\begin{equation}\label{eqa5.5.2}
\lsub{a}{N}'  (g):=\lrsub{a}{\Phi}{+} + \Phi(g)(\lrsub{b}{\Phi}{+})\in \wp(\lsub{a}{\Phi}).
\end{equation}  
This defines  a $G$-cocycle (in fact, a coboundary) $N'$ for  $\wp_{G}(\Phi)$. Note that
\begin{equation}\label{eqa5.5.3}
\lsub{a}{N}'  (g)=\bigl(\lrsub{a}{\Phi}{+} \cap \Phi(g)(\lrsub{b}{\Phi}{-} )
\bigr)\dotcup \bigl(\lrsub{a}{\Phi}{-} \cap \Phi(g)\bigl(\lrsub{b}
{\Phi}{+})\bigr)
\end{equation}  
and in particular 
 $N'(g)=\mset{-\a\mid \a\in N'({g})}$. Let
\begin{equation}\label{eqa5.5.4}
\lsub{a}{N}(g):=\pi_{a}(N_{g}')=\pi_{a}\bigl(\lrsub{a}{\Phi}{+} \cap
 \Phi(g)(\lrsub{b}{\Phi}{-} )\bigr)\in \wp(\lsub{a}{\L}).
\end{equation}  It follows immediately that this defines a 
  $G$-cocycle $N$ for $\wp_{G}(\L)$, so $(G,\L,N)$ is a set 
  protorootoid. Set $\mathfrak{L}(G,\Phi)=(G,\L,N)$. 
The map $\mathfrak{L}$ so defined on objects extends naturally to a 
functor $\mathfrak{L}$ with the desired properties.
  \end{proof} 
\begin{rem*} (1) In a signed groupoid-set $(G,\Phi)$, replacing  each  set $\lrsub{a}{\Phi}{+} $ of $\set{\pm}$ orbit
 representatives by another has the effect of  replacing the cocycle in 
 the corresponding set-protorootoid by another in the same 
 cohomology class. 
In general, there is no close   relation between the corresponding weak 
orders; if  the groupoid is connected and  simply connected, this is clear from Lemma \ref{ss1.14}.

 (2) In a similar manner as one defines 
$\grpdset$, one can define a  category 
 $\grpdc\text{\bf -}\balgc_{\pm}$ of groupoid representations in suitably defined category of signed Boolean algebras, in which  morphisms are   natural transformations with positivity-preserving components. For example, for a signed set $S$,
 one has a signed Boolean algebra $\wp(S)$ with decomposition $\wp(S)\cong \wp(S_{+})\times \wp(S_{-})$ into positive and negative Boolean subalgebras corresponding to $S=S_{+}\coprod S_{-}$. The equivalence in the  above  Proposition may be viewed as a restriction of an equivalence between   $\grpdc\text{\bf -}\balgc_{\pm}$  and $\prootc_{1}$.  
 
(3)  One may also define similarly   a subcategory  
 $\catc\text{\bf -}\abcatc_{\pm}$  of the category of  functors from small categories to a category of  small signed ab-categories.  The equivalence mentioned  in (2)  may be viewed as a restriction   of an equivalence of 
 $\catc\text{\bf -}\abcatc_{\pm}$ with a   category of triples  $(G,\L,N)$ where $G$ is a small category,  $\L$ is a  functor from $G$  to the category of small ab-categories, and $N$  is an idempotent-valued $1$-cocycle   for a naturally associated non-abelian cohomology theory.   Detailed statements and proof of the  facts in  (2)--(3) lie outside the scope of  these papers. 
 \end{rem*}  

  \subsection{Terminology for signed groupoid-sets and set protorootoids}\label{ssa5.6}
It is convenient to transfer  terminology defined for  protorootoids  to set protorootoids and signed groupoid-sets in the following ways. Unless otherwise specified, 
a set protorootoid $T=(G,\L,N)$ will be said to have  a property  defined for protorootoids if the corresponding protorootoid $\mathfrak{I}(T)$ has that same property.
Similarly, a signed groupoid-set $R=(G,\Phi)$   is, unless otherwise specified, said to have such a property if the corresponding   protorootoid   $\mathfrak{I}\mathfrak{L}(R)$   has that property. In particular, this 
convention can be applied to define 
simply connected (connected,  complemented, complete, simply or atomically generated, cocycle or interval finite, principal, preprincipal, pseudoprincipal, complete, saturated, rootoidal etc) set protorootoids and signed groupoid-sets, and faithful set protorootoids. Rootoidal set protorootoids are called set rootoids.  To avoid confusion with the standard notion of a faithful $G$-set (e.g. for $G$ a group), a signed groupoid-set $(G,\Phi)$ is called strongly faithful if the corresponding  protorootoid is faithful.

Similarly, the simple or atomic morphisms (in $\mor(G)$) of  $T$ (resp., $R$) are defined as the simple or atomic  morphisms of  $\mathfrak{I}(T)$ (resp., $\mathfrak{I}\mathfrak{L}(R)$), etc. This makes sense since $\mathfrak{I}$ and $\mathfrak{L}$ preserve the underlying groupoid.

 \subsection{}\label{ssa5.7} For convenience, this subsection explicitly spells out  the definition of rootoidal signed groupoid-sets, following the  conventions in \ref{ssa5.6}, and fixes additional terminology and notation  concerning them.

Let $R=(G,\Phi)$  be a
signed groupoid-set and $\CR=(G,\L,N):=\mathfrak{L}(R)$ denote the corresponding set protorootoid.  Use notation as in the proof of Proposition  \ref{ssa5.5}. For any morphism
 $g\in \lrsub{a}{G}{b}$, define 
 \begin{equation}    \label{eqa5.7.1}\Phi_{g}:= \lrsub{a}{\Phi}{+} \cap g(\lrsub{a}{\Phi}{-} ).
 \end{equation} Note that $\pi_{a}$ induces a bijection $\Phi_{g}\cong N_
 {g}$ of sets. The  $G$-cocycle (in fact, coboundary) $N'$ for $\wp_{G}(\Phi)$ from which $N$ was defined in the proof of \ref{ssa5.5} is, in this notation, $g\mapsto N'(g)=  \Phi_{g}\dotcup -\Phi_{g}$. 
 
     The weak order $\set{N(g)\mid g\in \lsub{a}{G}}$ at $a$ of $\CR$  identifies (as poset) with the inclusion 
    ordered set $\lsub{a}{L}:=\mset{\Phi_{g}\mid g\in \lsub{a}{G}}$ of subsets of $\lrsub{a}{\Phi}{+} $ via the order 
    isomorphism $N_{g}\mapsto \Phi_{g}$ for $g\in \lsub{a}{G}$;
    this  poset $\lsub{a}{L}$ will be  called the weak order of $R$ at $a$.

    It follows directly from the above that $R$ is rootoidal (i.e. by definition, $\CR$ is a set rootoid)  if and only if  the following conditions (i)--(iii) hold:
    \begin{conds}\item $R$ is strongly faithful i.e. if $g\in \lsub{a}{G}$ with $\Phi_{g}=\eset$ then $g=1_{a}$.
    \item For each $a\in \ob(G)$, $\lsub{a}{L}:=\mset{\Phi_{g}\mid g\in \lsub{a}{G}}$ is a complete meet semilattice in the order induced by inclusion of subsets of $\lrsub{a}{\Phi}{+} $.
    \item  Given $a\in \ob(G)$,   a non-empty family $(A_{i})_{i\in I}$ in $\lsub{a}{L}$ and $B\in \lsub{a}{L}$ such that $A_{i}\cap B=\eset$ for all $i\in I$, if $A:=\join_{i\in I} A_{i}$ exists in $\lsub{a}{L}$, then $A\cap B=\eset$.
       \end{conds}
       \subsection{} \label{ssa5.8} Finally, for  convenience of reference,  we give the following diagram of functors, which provides considerable latitude in formulation of many  results  about   rootoids and protorootoids.        
      
        \begin{equation*}
       \xymatrix{
       {\grpdset}\ar@<1ex>[r]_-{\cong}^-{\mathfrak{L}}&
       {\setproot }\ar@<1ex>[r]^-{\mathfrak{I}'}
       \ar@<1ex>[l]^-{\mathfrak{K}}
        \ar@/^2.5pc/[rr]_-{\mathfrak{I}}&
       {{\prootc}_{1}}\ar@<1ex>[r]^-{\mathfrak{I}''}
        \ar@<1ex>[l]^-{\mathfrak{J}'}
       &
       {\prootc}\ar@<1ex>[r]^-{\mathfrak{A}}
       \ar@/^2.5pc/[rr]_-{\mathfrak{P}}
       \ar@<1ex>[l]^-{\mathfrak{J}''}
        \ar@/^2.5pc/[ll]
	_-{\mathfrak{J}}&
       {\prootc^{a}}\ar@<1ex>[r]^-{\mathfrak{P}'}\ar@<1ex>[l]^-{\mathfrak{B}}&
       {\gpdpreord}\ar@<1ex>[l]^-{\mathfrak{Q}'} \ar@/^2.5pc/[ll]
	_-{\mathfrak{Q}}   }    
       \end{equation*}

      In this, the functors $\mathfrak{L}$, $\mathfrak{I}$, $\mathfrak{A}$, $\mathfrak{P}$, $\mathfrak{B}$ and $\mathfrak{K}$
have been previously defined. The functor $\mathfrak{I}''$ is the evident inclusion functor, and $\mathfrak{I}$  obviously factors via 
$\prootc_{1}$ to give a (unique) functor  $\mathfrak{I}'$ such that
$\mathfrak{I}=\mathfrak{I}''\mathfrak{I}'$.
The forgetful functor 
$\mathfrak{P}$ also obviously factors through abridgement $\ab$  to define the functor  $\mathfrak{P}'$ such that  $\mathfrak{P}=\mathfrak{P}'\mathfrak{A}$.  Since  $\mathfrak{A}\mathfrak{B}=\Id$, one has $\mathfrak{P}':=\mathfrak{P}\mathfrak{B}$.  The remaining   functors  $\mathfrak{J}'$, $\mathfrak{J}''$,  $\mathfrak{J}=\mathfrak{J}'\mathfrak{J}''$, $\mathfrak{Q}'$ and   
    $\mathfrak{Q}=\mathfrak{B}\mathfrak{Q}'$
    (and additional functors  involving $\gpdpreord_{P}$ which are relevant   to  the remarks in \ref{ss2.16})  will be described in  subsequent papers.           
   The functors    are defined  so that  each  upper  functor  (with arrow to the right)  is right adjoint to the symmetrically  corresponding lower functor (with arrow to the left). 
\section{Examples of rootoids}\label{sa6}
  \subsection{Coxeter systems}\label{ssa6.1}
 Let $S$ be a set and $M$ be a $S$-indexed Coxeter matrix. This means that
$M=(m_{r,s})_{r,s\in S}$ where $m_{s,r}=m_{r,s}\in \Nat_{\geq 2}\cup\set{\infty}$   for $r\neq s$ in $S$ and $m_{r,r}=1$ for all $r\in S$. The associated Coxeter group $W$ is the group with presentation \begin{equation}
\label{eqa6.1.1} W=\mpair{S\mid (rs)^{m_{r,s}}=1, r,s\in S, m_{r,s}\neq \infty}.\end{equation} It is known that the natural map $S\rightarrow W$ is an inclusion; we always identify $S$ with a subset of $W$ via this map. It is also known that the order of $rs$ in $W$ is $m_{r,s}$ for any $r,s\in W$. The pair $(W,S)$ is called a Coxeter system (with Coxeter matrix $M$). 

\subsection{}\label{ssa6.2} It is convenient to collect for reference some of the  many equivalent characterizations of Coxeter systems. In formulating these,  the following  general framework will be used. 
\begin{conds}\item[($*$)]  $(W,S)$ is a pair consisting of a group $W$ and a set $S\seq W$ of involutions (elements of order exactly two) generating $W$ (thus, $1_{W}\not\in S$). \end{conds}
Define the length function $l=l_{S}\colon W\rightarrow \Nat$ of the pair $(W,S)$ and the subset $T:=\mset{wsw^{-1}\mid w\in W,s\in S}$ of $W$.
 
\subsection{Exchange condition}\label{ssa6.3}  Let   $(W,S)$ be a pair  satisfying \ref{ssa6.2}($*$). Then $(W,S)$  is said to satisfy the \emph{exchange condition} if for all $s_{1},\ldots, s_{n}\in S$ and $s_{0}\in S$ with $l(s_{0}s_{1}\cdots s_{n})\leq
 l(s_{1}\cdots s_{n})=n$, there is some $i$ with $s_{0}s_{1}\cdots s_{n}=s_{1}\ldots \hat s_{i}\cdots s_{n}$, where the term $\hat s_{i}$ is omitted from the product.
The \emph{strong exchange condition}  is  the same except requiring $s_{0}\in T$ instead of $s_{0}\in S$.

These conditions are often stated  in  equivalent versions with  the hypothesis that $l(s_{1}\cdots s_{n})=n$ omitted.
There are many other variants, including the following two: EC is the condition that 
for all $w\in W$, $r,s\in S$, with $l(wr)>l(w)$ but $l(swr)\leq l(sw)$, one has  $sw=wr$. Further, SEC is the condition 
that if $w\in W$, $r\in S$,  $t\in T$, $l(tw)\geq l(w)$ but $l(twr)\leq l(wr)$ then $tw=wr$.
The following  is well known. 
\begin{prop*}  If $(W,S)$ satisfies $\text{\rm \ref{ssa6.2}($*$)}$, the following conditions are equivalent:
\begin{conds}\item $(W,S)$  is a Coxeter system.
\item $(W,S)$ satisfies the exchange condition.
\item $(W,S)$ satisfies  the strong exchange condition.
\item $(W,S)$ satisfies EC.
\item $(W,S)$ satisfies SEC.\end{conds} \end{prop*}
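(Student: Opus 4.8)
The plan is to prove the core equivalence of (i), (ii), (iii) via the cycle $\text{(i)}\Rightarrow\text{(iii)}\Rightarrow\text{(ii)}\Rightarrow\text{(i)}$, in which $\text{(iii)}\Rightarrow\text{(ii)}$ is immediate since $S\seq T$ makes the strong exchange condition a special case of the exchange condition, and then to attach (iv) and (v) by establishing $\text{(ii)}\Leftrightarrow\text{(iv)}$ and $\text{(iii)}\Leftrightarrow\text{(v)}$ as direct reformulations. The two steps carrying the real content are $\text{(i)}\Rightarrow\text{(iii)}$ and $\text{(ii)}\Rightarrow\text{(i)}$; everything else is either formal or a routine length computation.

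First I would prove $\text{(i)}\Rightarrow\text{(iii)}$ by the standard root-system argument. Assuming $(W,S)$ is a Coxeter system, construct the reflection representation on the real vector space with basis indexed by $S$, yielding the root system $\Phi=\Phi_{+}\dotcup\Phi_{-}$ on which $W$ acts, together with the bijection $\alpha\mapsto s_{\alpha}$ between positive roots and reflections in $T$. The key lemma is that for $w\in W$ and $s\in S$ one has $w(\alpha_{s})\in\Phi_{-}$ if and only if $l(ws)<l(w)$, so that for $t\in T$ the condition $l(tw)<l(w)$ is equivalent to $t\in N(w):=\mset{t\in T\mid l(tw)<l(w)}$ and to $w^{-1}$ carrying the positive root $\alpha_{t}$ to a negative one. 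Given a reduced word $w=s_{1}\cdots s_{n}$ and $t\in T$ with $l(tw)<l(w)$, tracking the unique sign change along the word produces an index $i$ with $t=s_{1}\cdots s_{i-1}s_{i}s_{i-1}\cdots s_{1}$, whence $tw=s_{1}\cdots\widehat{s_{i}}\cdots s_{n}$; this is exactly the strong exchange condition.

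The reformulations $\text{(iii)}\Leftrightarrow\text{(v)}$ and $\text{(ii)}\Leftrightarrow\text{(iv)}$ I would handle by direct translation, since SEC and EC are single-letter, left/right-transposed phrasings of the (strong) exchange property: one sets the exchanged elements equal (so that $tw=wr$ or $sw=wr$) and compares the lengths of $w$, $wr$, $tw$, $twr$, using throughout the elementary fact that multiplication by a generator changes length by exactly one. The deepest link is $\text{(ii)}\Rightarrow\text{(i)}$. Here I would first extract from the exchange condition the deletion condition (any non-reduced word $s_{1}\cdots s_{n}$ equals a product obtained by removing two of its letters), then use deletion to prove the \emph{word property}: any two reduced expressions of the same element of $W$ are related by braid relations alone. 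Finally, form the abstract Coxeter group $\widetilde{W}$ with presentation given by the orders $m_{r,s}:=\ord(rs)$ and the evident surjection $\widetilde{W}\to W$; the word property shows this map is bijective on reduced words and hence an isomorphism, identifying $(W,S)$ as a Coxeter system with the expected matrix.

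The main obstacle is the step $\text{(ii)}\Rightarrow\text{(i)}$: passing from a purely combinatorial cancellation rule to the global statement that $W$ has no relations beyond the braid relations. The delicate point is deriving the word property from deletion — essentially Matsumoto's theorem — which requires an induction on length arranging that any equality of two reduced words can be bridged by braid moves without ever passing through a longer word. All the other implications are either the trivial specialization $S\seq T$, the well-trodden sign-tracking computation in the root system, or the length bookkeeping that converts between the exchange formulations.
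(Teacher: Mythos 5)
Your proposal is correct, but it follows a genuinely different route from the paper's. The paper routes everything through condition (v) and the reflection cocycle: (i) $\Rightarrow$ (v) is the three-line computation recorded in Remark \ref{ssa6.4} (from $l(tw)\geq l(w)$ and $l(twr)\leq l(wr)$ one gets $t\notin N(w)$ while $t\in N(wr)=N(w)+\set{wrw^{-1}}$, forcing $tw=wr$), the implications (v) $\Rightarrow$ (iv), (iii), (ii) are recorded as easy, and the equivalence of (i) and (ii) is cited from \cite{Bour}. You instead prove (i) $\Rightarrow$ (iii) by the classical sign-tracking argument in a linearly realized root system, and you prove the converse (ii) $\Rightarrow$ (i) in full --- deletion, Tits' word property, comparison with the abstractly presented group --- which is precisely the step the paper outsources to Bourbaki. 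What the paper's proof buys is brevity and a demonstration of the cocycle formalism of Proposition \ref{ssa6.4} on which the whole paper is built; what yours buys is self-containedness, at the cost of developing the geometric representation and Matsumoto's theorem, i.e.\ reproving textbook material (\cite{Bour}, \cite{BjBr}) that the paper deliberately assumes.

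One point needs repair. In attaching (iv) and (v) you invoke ``the elementary fact that multiplication by a generator changes length by exactly one''. That parity statement is not available for a general pair satisfying \ref{ssa6.2}($*$): it amounts to the existence of the sign character and is essentially a consequence of (i) (for instance $W=\Int/2\times\Int/2$ with $S$ its three involutions has $l(ab)=1=l(a)$). So you may use parity in the directions (ii) $\Rightarrow$ (iv) and (iii) $\Rightarrow$ (v), where (i) is already secured by your cycle, but not in the directions (iv) $\Rightarrow$ (ii) and (v) $\Rightarrow$ (iii), which you need in order to close the equivalence. There the correct (still routine) argument uses only $\vert l(xr)-l(x)\vert\leq 1$ together with a first-descent scan: given $t$ and a reduced word $s_{1}\cdots s_{n}$ with $l(ts_{1}\cdots s_{n})\leq n$, take the least $i$ with $l(ts_{1}\cdots s_{i})\leq i$ and apply EC or SEC to $w=s_{1}\cdots s_{i-1}$, $r=s_{i}$. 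This also shows that these equivalences are a genuine (if easy) argument, not a literal ``direct translation''; with that adjustment your plan goes through.
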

\begin{proof} It is easy to see that (v)  implies (iv), (iii) and (ii).
The equivalence of (i) and (ii) is 
 in  \cite{Bour}. Finally, (i) implies (v) by a routine computation with the reflection cocycle of $(W,S)$ given in Remark \ref{ssa6.4}. \end{proof}

 \subsection{Reflection cocycle}\label{ssa6.4}  Let $(W,S)$ satisfy \ref{ssa6.2}($*$). Define the $W$-set
\begin{equation*}T=\mset{wsw^{-1}\mid w\in W,s\in S},\end{equation*} with (left) $W$-action by conjugation $(w,t)\mapsto wtw^{-1}$. Let $\L\colon W\to \setc$ denote the functor corresponding  to the $W$-set $T$ (regarding $W$ as one-object groupoid).  This gives a functor $\wp_{W}(\L)\colon W\to \bringc$, which affords the conjugacy  representation of  $W$ on $\wp(T)$. From \cite{DyRef}, one has the following.

 \begin{prop*} The pair 
$(W,S)$ is a Coxeter system if and only if there is a cocycle $N\colon W\rightarrow \wp_{W} (\L)$ such that $N(r)=\set{r}$ for all $r\in S$. In that case,  for all $ w\in W$, one has  $ N(w)=\mset{t\in T\mid l(tw)<l(w)}$, $l(tw)\equiv  l(w)+1 \pmod 2$ for all $t\in T$ and $l_{S}(w)=\vert N(w)\vert$.
\end{prop*}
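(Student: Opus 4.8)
The plan is to establish the two implications separately and to read off the supplementary formulas in the course of the argument. I first record two conventions: $W$ acts on $\wp_{W}(\L)=\wp(T)$ by conjugation, $w\cdot X=wXw^{-1}$ for $X\seq T$, and the cocycle condition reads $N(gh)=N(g)+g\cdot N(h)$, where $+$ is symmetric difference (the addition of the Boolean ring $\wp(T)$). Since $S$ generates $W$, the cocycle condition shows that a cocycle is completely determined by its values on $S$; in particular there is at most one cocycle $N$ with $N(r)=\set{r}$ for all $r\in S$, and to prove the equivalence it suffices to decide when it exists.

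Suppose first that $(W,S)$ is a Coxeter system. By the standard criterion for a crossed homomorphism on a free group to descend to a quotient, the assignment $r\mapsto\set{r}$ extends to a cocycle on $W$ precisely when it is compatible with the defining relations \eqref{eqa6.1.1}. For $r^{2}=1$ one has $N(r^{2})=\set{r}+r\cdot\set{r}=\set{r}+\set{r}=\eset$, so this is automatic. For a braid relation $(rs)^{m}=1$ with $m=m_{r,s}$, iterating the cocycle condition along the alternating word $rsrs\cdots$ of length $2m$ writes the would-be value as the symmetric difference of the $2m$ reflections obtained by conjugating each letter by the preceding prefix. These all lie in the dihedral subgroup $\pair{r,s}$, which has order exactly $2m$ because $rs$ has order $m_{r,s}$; it therefore has exactly $m$ reflections, each occurring exactly twice in the list, so the symmetric difference is $\eset$ and the cocycle exists.

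Conversely, assume such a cocycle $N$ exists, with $(W,S)$ only required to satisfy \ref{ssa6.2}($*$). For any word $w=s_{1}\cdots s_{n}$, iterating the cocycle condition gives $N(w)=\set{t_{1}}+\cdots+\set{t_{n}}$, where $t_{i}:=(s_{1}\cdots s_{i-1})s_{i}(s_{1}\cdots s_{i-1})^{-1}\in T$; hence $\vert N(w)\vert\equiv n\pmod 2$, so the parity of the length of every word for $w$ equals that of $\vert N(w)\vert$, and there is a sign homomorphism $\epsilon\colon W\to\set{\pm 1}$ with $\epsilon(s)=-1$ for $s\in S$. The key step is the length-compatibility lemma: for $s\in S$ and $w\in W$, one has $s\in N(w)$ if and only if $l(sw)<l(w)$. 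Granting it, an induction on $l(w)$ using a left descent $s$ (so that $l(sw)=l(w)-1$ and, by the lemma applied to $sw$, $s\notin N(sw)$, making the two terms of $N(w)=\set{s}+s\cdot N(sw)$ disjoint) yields $\vert N(w)\vert=l(w)$. Consequently the $t_{i}$ attached to a reduced word $w=s_{1}\cdots s_{n}$ are pairwise distinct and $N(w)=\set{t_{1},\ldots,t_{n}}$. Now for $s_{0}\in S$ with $l(s_{0}w)\le n$, parity forces $l(s_{0}w)\neq n$, hence $l(s_{0}w)=n-1<l(w)$, so $s_{0}\in N(w)$ by the lemma, giving $s_{0}=t_{i}$ for some $i$; the direct computation $t_{i}w=s_{1}\cdots\hat s_{i}\cdots s_{n}$ is exactly the exchange condition. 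By Proposition \ref{ssa6.3} (exchange condition implies Coxeter system), $(W,S)$ is a Coxeter system. The supplementary formulas then follow: $l_{S}(w)=\vert N(w)\vert=l(w)$ is the identity just shown, the description $N(w)=\mset{t\in T\mid l(tw)<l(w)}$ is the standard description of the reflection cocycle, with which the given $N$ coincides by the uniqueness noted above, and $l(tw)\equiv l(w)+1\pmod 2$ follows from $\epsilon$ together with the fact that every $t\in T$ is conjugate to some $r\in S$, so $\epsilon(t)=-1$.

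The main obstacle is the length-compatibility lemma, whose harder content is that a simple reflection $s$ with $l(sw)>l(w)$ cannot belong to $N(w)$; this is the point where the arithmetic of lengths and of inversion sets must be synchronized, and neither cardinality nor the parity homomorphism excludes the offending configuration by itself. I would resolve it by a minimal-counterexample argument on $l(w)$: given a failure of the lemma with $l(w)$ least, one peels off either the left descent $s$ itself or a right descent $r$ of $w$, rewrites $N(w)$ by the cocycle identity $N(wr)=N(w)+w\cdot\set{r}$, and uses the validity of the lemma for the shorter element $sw$ or $wr$ to contradict the length inequality in each of the remaining cases. This delicate bookkeeping is precisely the characterization of Coxeter systems by their reflection cocycle established in \cite{DyRef}, so in the final text this lemma, and with it the converse implication, may simply be cited from there.
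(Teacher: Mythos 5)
Your proposal is correct, but it necessarily takes a different route from the paper, because the paper offers no proof at all: Proposition \ref{ssa6.4} is quoted wholesale from \cite{DyRef}. What you have done is reconstruct (most of) the argument of that reference. Your forward direction is complete and sound: a cocycle valued in $\wp(T)$ is the same thing as a section $W\to\wp(T)\rtimes W$ of the projection, so existence reduces to checking that the prescription $r\mapsto(\set{r},r)$ kills the relators of \eqref{eqa6.1.1}; the involution relators are immediate, and for a braid relator the $2m$ prefix-conjugates satisfy $t_{i+m}=t_{i}$ (the prefix-conjugates are the alternating words of odd length, and the word of length $2i-1+2m$ equals that of length $2i-1$ since $(sr)^{m}=1$), which is the cleanest justification of your ``each occurs exactly twice'' claim and makes the symmetric difference vanish. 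Your converse is also correctly structured, and the inductive sketch of the length-compatibility lemma does close: the harder direction goes by contradiction, since $s\notin N(w)$ together with $l(sw)<l(w)$ gives, with $v:=sw$, that $N(w)=\set{s}+sN(v)s$ forces $s\in N(v)$, whence the inductive hypothesis at $v$ yields $l(w)=l(sv)<l(v)<l(w)$. From the lemma you correctly obtain $\vert N(w)\vert=l(w)$, distinctness of the $t_{i}$ for a reduced word, the exchange condition, and Coxeterness via Proposition \ref{ssa6.3}, using only the implication (ii)$\Rightarrow$(i) attributed there to \cite{Bour}, so there is no circularity. What your route buys is a proof that is self-contained up to one cited lemma, where the paper has only a citation; what the paper's choice buys is brevity.

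One step you should tighten: justifying $N(w)=\mset{t\in T\mid l(tw)<l(w)}$ ``by uniqueness'' presupposes that $w\mapsto\mset{t\in T\mid l(tw)<l(w)}$ is itself a cocycle taking the value $\set{r}$ at each $r\in S$ --- but that is exactly the assertion of \cite{DyRef} being reproved, so as written this clause is a citation rather than a deduction. Within your own framework the clean fix is: the inclusion $N(w)\seq\mset{t\in T\mid l(tw)<l(w)}$ follows from $N(w)=\set{t_{1},\ldots,t_{n}}$ together with $t_{i}w=s_{1}\cdots\hat{s}_{i}\cdots s_{n}$, and the reverse inclusion follows from the strong exchange condition, which should be cited from \cite{Bour} or \cite{Hum} rather than from Proposition \ref{ssa6.3}, since the paper's proof of (i)$\Rightarrow$(iii)/(v) there runs through the present proposition.
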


If $(W,S)$ is a Coxeter system, $T$ is called the set of
 \emph{reflections}   and $N$ is called the \emph{reflection cocycle} of $(W,S)$. 
One then has   a set protorootoid $\CC'_{(W,S)}:=(W,\L,N)$ and its corresponding protorootoid $\CC_{(W,S)}:=\mathfrak{I}(\CC'_{(W,S)})=(W,\wp_{W}(\L),N)$.
\begin{rem*} To illustrate the usefulness of the Proposition for 
computations, we use it to  show that  a Coxeter system $(W,S)$ satisfies SEC. Suppose $l(tw)\geq l(w)$ but $l(twr)\leq l(wr)$ where $t\in T$, $r\in S$. Then
$t\not\in N(w)$ but \begin{equation*}
t\in N(wr)=N(w)+wN(r)w^{-1}=N(w)+\set{wrw^{-1}},
\end{equation*} so $t=wrw^{-1}$ and $tw=wr$.   This argument also shows that  
$l(wr)=l(w)+1$.
 \end{rem*}
\subsection{Root system}\label{ssa6.5}
 Using the generalities in Section \ref{sa5}, another  equivalent reformulation of the last  characterization of Coxeter systems is  as follows.\begin{prop*}
 The pair $(W,S)$ satisfying $\text{\rm \ref{ssa6.2}($*$)}$ is a Coxeter system if and only if there is an action of $W$ on the set $T\times \set{\pm }$ such that for $s\in S$, $t\in T$, $\epsilon\in \set{\pm }$
one has  \begin{equation}\label{eqa6.5.1} s(t,\epsilon)=(sts,\nu\epsilon), \qquad  \nu=\begin{cases}+,&\text{if $s=t$}\\
-,&\text{if $s\neq t$.}\end{cases}\end{equation}\end{prop*}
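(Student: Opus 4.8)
The plan is to read this proposition as the translation of the reflection-cocycle characterization of Proposition~\ref{ssa6.4} across the equivalence $\mathfrak{K}\colon\setproot\to\grpdset$ of Proposition~\ref{ssa5.5}. Concretely, I would show that to give an action of $W$ on $T\times\set{\pm}$ of the stated form is the same as to give a $1$-cocycle $N$ for $\wp_{W}(\L)$, where $\L\colon W\to\setc$ is the conjugation action on $T$, satisfying $N(s)=\set{s}$ for all $s\in S$; the proposition then follows immediately from Proposition~\ref{ssa6.4}.

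For the forward direction, suppose $(W,S)$ is a Coxeter system. By Proposition~\ref{ssa6.4} the reflection cocycle $N$ exists and satisfies $N(s)=\set{s}$, giving the set protorootoid $\CC'_{(W,S)}=(W,\L,N)$. Applying $\mathfrak{K}$ produces a signed groupoid-set $(W,\Phi)$ with $\Phi=T\times\set{\pm}$ whose action is given by formula~\eqref{eqa5.5.1}. Specializing to a generator $s\in S$ and using $s^{*}=s$ together with $N(s^{*})=N(s)=\set{s}$, the sign factor $\e'$ in~\eqref{eqa5.5.1} depends only on whether $t=s$, so the action restricts on $S$ to an action of the form displayed in the statement. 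Well-definedness of this $W$-action requires no separate verification, since $\mathfrak{K}$ takes values in $\grpdset$.

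For the converse, suppose such an action is given. Since the sign $\nu$ in the stated formula is independent of $\e$, each generator—and hence every element of $W$—commutes with the free sign involution on the second factor, so $\Phi:=T\times\set{\pm}$ is a signed $W$-set; and since the first coordinate transforms by $t\mapsto sts$ on generators, it transforms by conjugation throughout, so $\Phi$ lies over the conjugation $W$-set $T$. Applying the inverse equivalence $\mathfrak{L}$ then yields a set protorootoid $(W,\L',N')$ in which $\L'$ is identified, as the $\set{\pm}$-orbit space, with the conjugation action on $T$, and $N'$ is recovered from the action via~\eqref{eqa5.5.4}. Evaluating~\eqref{eqa5.5.4} at a generator $s$, the generator formula forces $N'(s)=\set{s}$, whence Proposition~\ref{ssa6.4} shows that $(W,S)$ is a Coxeter system.

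The main obstacle is entirely bookkeeping: one must pin down the $\set{\pm}$-orbit conventions (the choice of $\lrsub{a}{\Phi}{+}$) so that the sign data in~\eqref{eqa5.5.1} and~\eqref{eqa5.5.4} match the single sign $\nu$ appearing here, and check that the condition $N(s)=\set{s}$ is \emph{equivalent} to the displayed generator action and not merely implied by it. Both points are subsumed by the general correspondence of Proposition~\ref{ssa5.5}, so the only genuinely new input is Proposition~\ref{ssa6.4}; the remainder is the careful tracking of signs and positivity through $\mathfrak{K}$ and $\mathfrak{L}$. I would also record the two consistency checks that do not depend on $(W,S)$ being Coxeter—that each $s$ squares to the identity on $T\times\set{\pm}$, and that the first-coordinate action is forced to be conjugation—both of which are immediate from the formula.
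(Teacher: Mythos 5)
Your route is the paper's own: the paper offers no separate proof of this proposition, presenting it as a reformulation of Proposition \ref{ssa6.4} obtained ``using the generalities in Section \ref{sa5}'', i.e.\ by transporting the reflection-cocycle characterization across the equivalence $\mathfrak{K},\mathfrak{L}$ of Proposition \ref{ssa5.5}, exactly as you propose; your two directions (apply $\mathfrak{K}$ to $\CC'_{(W,S)}$, and apply $\mathfrak{L}$ to the given action and invoke Proposition \ref{ssa6.4}) are the intended argument.

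However, the one point you declare to be bookkeeping ``subsumed by the general correspondence'' is precisely where the content lies, and as written your proof has a sign mismatch with the statement as printed. Carrying out the check: by \eqref{eqa5.5.1}, $\mathfrak{K}$ applied to the reflection cocycle gives $\e'=-$ exactly when $t\in N(s^{*})=N(s)=\set{s}$, so the sign is reversed precisely when $t=s$; conversely, \eqref{eqa5.5.4} returns $N'(s)=\set{s}$ only from an action whose sign flips exactly when $t=s$. This is the \emph{opposite} of the case split displayed in \eqref{eqa6.5.1}, which has $\nu=+$ when $s=t$. Under the printed convention your claim ``the generator formula forces $N'(s)=\set{s}$'' fails (one gets $N'(s)=T\sm \set{s}$, to which Proposition \ref{ssa6.4} does not apply), and in fact the printed statement is false: for $W=\set{1,a,b,ab}\cong \Int/2\times\Int/2$ with $S=T=\set{a,b,ab}$, letting each $s\in S$ fix the two elements $(s,\pm)$ and reverse the sign on the fibers over the other two reflections defines a genuine $W$-action of the printed form (these diagonal sign changes commute, and the composite of any two of them is the third), yet $(W,S)$ is not a Coxeter system, since it admits no sign character: $\e(ab)=\e(a)\e(b)=+$ although $ab\in S$. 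What your argument actually proves is the statement with the two cases interchanged ($\nu=-$ if and only if $s=t$), which is the convention of \eqref{eqa6.5.2}, of Bourbaki, and of the functors $\mathfrak{K},\mathfrak{L}$; the displayed \eqref{eqa6.5.1} is evidently a misprint. So your proof is correct for the intended proposition, but the sign-tracking cannot be waved off as automatic: it is exactly what separates the true statement from a false one.
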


  It is known from \cite{Bour} (and follows also  from the Proposition, Proposition \ref{ssa6.4}  and Section  \ref{sa5}) that if  this action exists, it  is given explicitly by 
 \begin{equation} \label{eqa6.5.2}w(t,\epsilon)=(wtw^{-1},\nu\epsilon),\qquad  \nu=\begin{cases}+,&\text{if $l(wt)>l(w)$}\\
-,&\text{if $l(wt)<l(w)$.}\end{cases}\end{equation}

For any Coxeter system $(W,S)$, this gives a  a  $(W\times\set{\pm })$-set $\Phi:=T\times\set{\pm }$, called the \emph{abstract root system} of $(W,S)$, with $\set{\pm}$ action by multiplication on the right factor  and  positive roots $\Phi_{+}:=T\times\set{+}$.  For $\a=(t,\e)\in \Phi$, define the corresponding reflection  $s_{\a}:=t\in T$.  One then has \begin{equation}\label{eqa6.5.3}
N(w)=\mset{s_{\a}\mid \a\in \Phi_{w}}, \qquad \Phi_{w}:=\Phi_{+}\cap w(-\Phi_{+}).
\end{equation}

Viewing $W$ as one-object groupoid and  $\Phi$ as a functor $\Phi\colon W\to \setc_{\pm}$, define  $C_{(W,S)}:=(W,\Phi)$ regarded as object of $\grpdset$ and call it  the \emph{standard signed groupoid-set} of $(W,S)$. Comparing the above  with Section  \ref{sa5} shows that 
$\mathfrak{L}(\CC_{(W,S)})\cong C_{(W,S)}$.

 The set  
 $\Pi:=\mset{\a\in \Phi_{+}\mid s_{\a}\in S}$ is called the set of 
 \emph{simple roots} of
  $\Phi$.
   For any $J\seq S$, the subgroup $W_{J}:=\mpair{J}$  is called the \emph{standard parabolic subgroup} of $W$ generated by $J$  and $\Pi_{J}:=\mset{\a\in \Pi\mid s_{\a}\in J}$ is called the set of simple roots of $W_{J}$. 
  
\subsection{}\label{ssa6.6} This subsection contains informal remarks about   other, more standard,  notions of root systems of Coxeter groups. It is well-known that  Coxeter systems $(W,S)$   admit  
geometric realizations as  reflection groups associated to  root systems 
 $\Psi\seq V$ in  real vector spaces $V$.
Such  root systems will be loosely referred to  as \emph{linearly realized} root systems. Consider those  
 in   \cite[Ch 5]{Hum}, for example.  Note  that $\Psi$ is \emph{reduced} in the sense that the only scalar multiples of a root $\a$ which are  roots are $\pm \a$.
 Regard $\Psi$ 
as a  $W$-set  with commuting free action of the sign group $\set{\pm }$ by 
multiplication by $\pm 1$, and the standard  system of  positive roots  $\Psi_{+}$ as orbit representatives. 
 For each 
 $\a\in \Psi$, there is an associated 
  reflection $r_{\a}\in T\seq W$.
 There is an 
   isomorphism of $W\times\set{\pm  }$-sets  $\th\colon \Psi\mapsto \Phi=T\times\set{\pm }$ given by 
 $(\alpha,\e)\mapsto (r_{\a},\e)$ for $\a\in \Psi_{+}$,
  $\e\in \set{\pm }$; this holds since  the $W$-action on $T\times \set{\pm}$ is determined by \eqref{eqa6.5.1}  and there is an analogous formula for the $W$-action on $\Phi$ (which is easily checked using the well-known  fact that the only positive root  of $\Psi$ made negative by a simple reflection is its corresponding simple root). 
 The isomorphism maps $\Psi_{+}$ bijectively to $\Phi_{+}$ and 
 satisfies $r_{\th(\a)}=s_{\a}$ for $\a\in \Psi$.
 
  One may view $\Psi$ as a functor and   $(W,\Psi)$ as object of  $\grpdset$; as such, it is clearly   isomorphic to $C_{(W,S)}=(W,\Phi)$. 
 Similar remarks can be made for many other classes of  linearly realized root systems of Coxeter groups which are reduced in the above sense and  \emph{real} in that they do not contain imaginary roots  as  in the root systems of Kac-Moody Lie algebras.   In the case of a finite Weyl 
  group $W$, for example, one obtains a  signed 
  groupoid-set $(W,\Psi)$  isomorphic  to $(W,\Phi)$   from a reduced (crystallographic) root   system $\Psi$ of $(W,S)$ and a positive system $\Psi_{+}$, in   the sense of \cite{Bour}. 
   More generally still, the real compressions  (as informally described in the introduction, and defined in  subsequent papers) of linearly realized   root systems of Coxeter groups  in the literature  are   isomorphic to $(W,\Phi)$, whether they are real and  reduced or not.
   
 Many of the most important applications of Coxeter groups involve natural occurrences of  linearly realized root systems (e.g.   in the theory of semisimple 
complex Lie algebras) and existence of such root systems in general provides  powerful techniques   for 
the deeper  study  of Coxeter groups and related structures.   One expects that root systems 
in the abstract sense of these papers will not  have linear realizations in real vector spaces in a similar sense in general
(in the case of complete, principal, rootoidal signed groupoids-sets attached to non-realizable simplicial oriented geometries, for example). Some  results concerning   preservation of realizability   under   the main  constructions of these papers  will be  given in  subsequent papers.

 \subsection{Standard rootoid of a  Coxeter system}\label{ssa6.7}
   The standard protorootoids attached to  Coxeter systems are the motivating examples of   principal rootoids.
\begin{thm*} Let $(W,S)$ be a Coxeter system. \begin{num} \item The triple $\CC=\CC_{(W,S)}:=(W,\wp_{W} (\L),N)$ is    a   principal  rootoid  with $S$ as its simple generators.  It is complete if and only if  $W$ is finite.
\item  $\CC'=\CC'_{(W,S)}:=(W,\L,N)$ is a (principal) set rootoid. 
\item The   (principal, rootoidal) signed groupoid-set $\mathfrak{K}(\CC'_{(W,S)})$  is isomorphic to  the standard signed groupoid-set  $C_{(W,S)}:=(W,\Phi)$ of $(W,S)$.\end{num} \end{thm*}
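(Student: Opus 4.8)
The plan is to establish (a) first, then deduce (b) formally and verify (c) by a direct comparison of action formulas. For (a), I would read the three ``principal'' requirements off Proposition \ref{ssa6.4}. Since $N(w)=\mset{t\in T\mid l(tw)<l(w)}$ is finite with $\vert N(w)\vert=l_{S}(w)$, each $N(w)$ has finite rank $l_{N}(w)=\rank(N(w))=\vert N(w)\vert=l_{S}(w)$, so $\CC$ is cocycle finite with $l_{N}=l_{S}$; it is therefore interval finite as well. Faithfulness follows because $N(w)=\eset$ forces $l_{S}(w)=0$, i.e. $w=1$ (Lemma \ref{ss2.5}(i)). The simple morphisms are exactly $S$: for $r\in S$ one has $N(r)=\set{r}$, a singleton and hence an atom of $\wp(T)=\lsub{\bullet}{\L}$, while conversely $N(w)$ being an atom forces $l_{S}(w)=\vert N(w)\vert=1$. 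As $S$ generates $W$, the protorootoid is simply generated, and with $l_{S}=l_{N}$ it is principal with $S_{\CC}=S$ by Lemma \ref{ss3.6}(c).

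It then remains to check the two rootoid axioms beyond faithfulness. That each weak order $\lsub{\bullet}{\CL}=\mset{N(w)\mid w\in W}$, ordered by inclusion, is a complete meet semilattice is the classical description of weak right order \cite{BjBr}, which also shows it is a complete lattice, equivalently has a maximum element $N(w_{0})=T$, precisely when $W$ is finite; combined with the criterion of \ref{ss4.3} this gives the completeness claim once $\CC$ is known to be rootoidal. The genuine obstacle is the JOP. When $W$ is finite it is free: $T$ is finite, $\wp(T)$ is unital, and $N(w)^{\cp}=T\sm N(w)=N(ww_{0})\in \lsub{\bullet}{\CL}$, so the protomesh $(\wp(T),\lsub{\bullet}{\CL})$ is complemented and Lemma \ref{ss4.1}(a) supplies the JOP at once (indeed $\CC$ is then a complemented, hence complete, rootoid by \ref{ss4.3}). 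For infinite $W$ the weak order is not complemented, and here I would pass to the realized root system, identifying $\lsub{\bullet}{\CL}$ with the poset of inversion sets $\mset{\Phi_{w}}$: each $\Phi_{w}$ is biclosed, so $\Phi_{+}\sm \Phi_{y}$ is closed, and since the inversion set of a join $x=\join_{i} x_{i}$ is the closure of $\bigcup_{i}\Phi_{x_{i}}$ while each $\Phi_{x_{i}}$ avoids $\Phi_{y}$, the closure remains inside the closed set $\Phi_{+}\sm\Phi_{y}$, whence $\Phi_{x}\cap \Phi_{y}=\eset$. Alternatively one reduces the JOP to finite standard parabolic subgroups, where the complemented case applies, via the semilocal criterion for interval finite faithful protorootoids. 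The fact that joins are computed by the root-closure operation is the one substantial input and the step I expect to be hardest to make self-contained.

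Part (b) is then formal: by \ref{ssa5.3}(b) the set protorootoid $\CC'=(W,\L,N)$ is a set rootoid exactly when $\mathfrak{I}(\CC')=(W,\wp_{W}(\L),N)=\CC$ is a rootoid, which is (a), and principality transfers under the convention of \ref{ssa5.6}. For (c) I would compute $\mathfrak{K}(\CC')$ from \ref{ssa5.5}: its root system has underlying set $\lsub{\bullet}{\L}\times\set{\pm}=T\times\set{\pm}$ with positive part $T\times\set{+}$, and, by \eqref{eqa5.5.1}, $W$-action $w(t,\e)=(wtw^{-1},\e\e')$ with $\e'=-$ iff $t\in N(w^{-1})$. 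Using $t^{-1}=t$ and $l(w^{-1})=l(w)$, the condition $t\in N(w^{-1})$, i.e. $l(tw^{-1})<l(w^{-1})$, is equivalent to $l(wt)<l(w)$, which is precisely the sign rule \eqref{eqa6.5.2} defining the standard root system $\Phi=T\times\set{\pm}$. Hence the identity map on $T\times\set{\pm}$ is a positivity-preserving, $W$-equivariant isomorphism $\mathfrak{K}(\CC')\xrightarrow{\cong}(W,\Phi)=C_{(W,S)}$ in $\grpdset$, which proves (c).
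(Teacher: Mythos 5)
Your proposal is correct and follows essentially the same route as the paper's proof: principality via Proposition \ref{ssa6.4} and the identification $l_{S}=l_{N}$, the complete meet semilattice property quoted from \cite{BjBr}, the JOP imported from \cite{DyWeak} (your closure-of-the-union-of-inversion-sets argument is precisely the content of that citation, and you rightly flag it as the one substantial input), completeness if and only if $W$ is finite via the longest element, and parts (b), (c) obtained formally from the identifications $\mathfrak{I}(\CC')=\CC$ and the comparison of the sign rules \eqref{eqa5.5.1} and \eqref{eqa6.5.2}. Your alternative suggestion of reducing the JOP to finite (rank two) standard parabolic subgroups via the semilocal criterion is exactly the paper's second, self-contained proof in \ref{ssa6.10}.
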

\begin{proof}  
 It has already been noted  that   
 $\CC'=(W,\L,N)$ is a  set protorootoid, that  $\CC:=\mathfrak{I}(\CC')=(W,\wp (\L),N)$ is its corresponding   protorootoid and that $C_{(W,S)}\cong \mathfrak{K}(\CC')$ is the corresponding signed groupoid-set. So only (a) requires proof.
By Proposition \ref{ssa6.4}, \begin{equation} \label{eqa6.7.1}l(w)=\vert N(w)\vert,   \text{ for $w\in W$}\end{equation} where $l:=l_{S}$.
Therefore, $\vert N(w)\vert=0$ if and only if  $l(w)=0$ if and only if  $w=1_{W}$, since $S$ generates $W$. 
This implies that $\CC$ is faithful i.e. satisfies  \ref{ss4.3}(i). 
Note that the elements of the Boolean ring $\wp(T)$  of finite rank are the finite subsets, and their rank is their cardinality.
Therefore, \eqref{eqa6.7.1} implies that $S$ is the set of simple morphisms of $W$ (so $\CC$ is simply generated) and that $l_{S}=l_{N}$. This shows that $\CC$ is a faithful, principal protorootoid.
To show that $\CC$ is a   rootoid, it remains to prove that $\CC$ satisfies \ref{ss4.3}(ii)--(iii).

 The   weak right  
order $\leq$ of $\CC$ at the unique object of $W$ (as groupoid)    is, by definition, the partial order $\leq $ of  $W$ defined by
$x\leq y$ if and only if  $N(x)\seq N(y)$; this is  consistent with the usual definition of right  weak order of $W$ by \cite[Proposition 3.1.3]{BjBr} or Corollary \ref{ss3.14}.
It is well known that $(W,\leq)$ is a complete meet semilattice (see 
 \cite[Theorem 3.2.1]{BjBr}) i.e. \ref{ss4.3}(ii) holds.
The JOP (property \ref{ss4.3}(iii))  is not standard, but it (and also \ref{ss4.3}(ii)) is proved  in \cite{DyWeak}.  Hence $\CC$ is a rootoid.
  By
 definition, $\CC$ is complete if and only if  $W$ has a maximal element in weak right 
 order.
It is well known  that such a maximal  element exists if and only if  $W$ 
 is finite (in which case, it is the longest element of $W$ with respect to $l$).
 This completes the proof.\end{proof}
\begin{rem*} Given a  Coxeter groupoid $G$ with a (linearly realized, 
real) root system $\Phi$  (as defined in  \cite{HY} and \cite{CH}), one can   show 
similarly  that, regarding $\Phi$ just as signed $G$-set in the natural 
way, $(G,\Phi)$ is a principal, rootoidal  signed groupoid-set. 
(In fact, \cite{HV}  proves that the weak orders of the subclass of 
(finite) Weyl groupoids have complete ortholattices as their weak orders.) Also,  $(G,\Phi)$ is  complete if and only if   each component of $G$ is finite. One proof  uses results from \cite{HY} and \cite{CH}   along with  extensions to Coxeter 
groupoids of some of the results  in \cite{DyWeak} (note that no  extensions of the arguments in \cite{DyWeak} involving Bruhat order of $W$ are known). Another proof of this fact can be given along the lines of \ref{ssa6.10}. A  proof of a  more general fact will be given  in  subsequent papers.
\end{rem*}
\subsection{Reflection subgroups}\label{ssa6.8}
It is worth observing that inclusions of reflection subgroups of Coxeter groups give rise to morphisms in $\prootcp$, instead of  in  the category $\prootc$ with which   this series of papers is more concerned.  To see this, 
let $(W,S)$ be a Coxeter system, $\CR=(W,\L,N):=\CC'_{(W,S)}$ and $W'$ be a reflection subgroup of
$W$ i.e. a subgroup $W'$ of $W$ such that  $W'=\mpair{T'}$ where 
$T':=T\cap W'$. Let $\L'$ be the natural conjugacy representation of 
$W'$ on $T'$. Let  $i\colon W'\to W$  and $j\colon T'\to T$ denote 
the inclusion maps. There is  a natural transformation
$\nu\colon \L'\to \L i$ with $j$  as its unique component.
Then $\wp_{W'}(\nu)\colon \wp_{W}(\L)i\to \wp_{W'}(\L')$ is a natural transformation  with unique component $\wp(j)$ where  $\wp(j)(A):=j^{-1}(A)=A\cap T'$ for all $A\seq T$.  Let $N'\colon W\to \wp(T')$ be the composite $N':=\wp(j) N i$ i.e. $N'(w):=N(w)\cap T'$ for $w\in W'$.
The definitions  imply that $N'$ is a $W'$-cocycle for $\wp_{W'}(\L')$ i.e.  $N'\in Z^{1}(W',\wp_{W'}(\L'))$. 
Hence $\CR':=(W',\L',N')$  is a set protorootoid. It is easily checked that $(i,\wp_{W'}(\nu))
\colon (W',\wp_{W'}(\L'),N')\to (W,\wp_{W}(\L),N)$ is a morphism in $\prootcp$. 

The above-mentioned facts are all purely formal consequences of the 
definitions. According to \cite{DyRef}, there is a subset $S'$ of $W'$ such that  $(W',S')$ is a Coxeter 
system and $\CR'=\CC_{(W',S')}$ (necessarily, $S'=\mset{s\in W'\mid \vert N'(s)\vert=1}$).
\begin{rem*} From the above formula $N'(w):=N(w)\cap T'$ for $w\in W'$, it follows that the identity map on $W'$ is an order preserving map from $W'$, ordered by the restriction to $W'$ of weak order on $W$, to $W'$ in its weak order. The map is not in general an order isomorphism.  \end{rem*}

   \subsection{Semilocal criterion for rootoids}
   \label{ssa6.9}  The following result  is closely related to a  criterion in \cite{BjEZ} for a finite poset to be a lattice, and  
 its proof is very similar  in arrangement to  the argument from \cite[Section 4]{DyWeak} used to establish the JOP  in the proof of Theorem \ref{ssa6.7}. 
 \begin{prop*} Assume that  $\CR=(G,\L,N)$ is  an interval finite, faithful protorootoid. Let  $A:=A_{\CR}$ denote its set of atoms. Then
 $\CR$ is a rootoid if and only if  it  satisfies the following condition: whenever $a\in \ob(G)$, $r,s\in \lsub{a}{A}$ and $g\in \lsub{a}{G}$  are such that $ N_{r}\cap  N_{g}= N_{s}\cap  N_{g}=\eset$ and $ N_{r}, N_{s}$ have an upper bound in $ \lsub{a}{\CL}$, then the join  $ N_{r}\vee  N_{s}$ exists in $ \lsub{a}{\CL}$ and satisfies $( N_{r}\vee  N_{s})\cap  N_{g}=\eset$.\end{prop*}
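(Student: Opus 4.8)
The plan is to prove both implications, the forward one (rootoid $\Rightarrow$ SLC) being immediate and the converse carrying all the weight. For the forward direction, given atoms $r,s\in\lsub{a}{A}$ and $g\in\lsub{a}{G}$ as in the statement, the pair $\{N_{r},N_{s}\}$ is bounded above in the complete meet semilattice $\lsub{a}{\CL}$, so by \ref{ss1.1} its join $N_{r}\vee N_{s}$ exists; applying the JOP (condition \ref{ss4.3}(iii)) to the two-element family $(N_{r},N_{s})$ and the element $N_{g}$ then yields $(N_{r}\vee N_{s})\cap N_{g}=\eset$. No further work is needed here.

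For the converse I would first reduce. Since $\CR$ is interval finite, any subset of $\lsub{a}{\CL}$ bounded above by some $N_{u}$ lies in the finite interval $[\eset,N_{u}]$ and is therefore finite; hence the complete meet semilattice property (together with the JOP) will follow once I establish the single combined statement $(\star)$: for every $a$, every $g\in\lsub{a}{G}$, and every $x,y\in\lsub{a}{G}$ with $N_{x}\cap N_{g}=N_{y}\cap N_{g}=\eset$ and $\{N_{x},N_{y}\}$ bounded above, the join $N_{x}\vee N_{y}$ exists in $\lsub{a}{\CL}$ and satisfies $(N_{x}\vee N_{y})\cap N_{g}=\eset$. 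Indeed, taking $g=1_{a}$ gives joins of bounded pairs, whence (iterating over finite bounded sets, and passing to sets of lower bounds) the complete meet semilattice property; and the orthogonality clause, applied pairwise, delivers the JOP for arbitrary families.

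I would then prove $(\star)$ by induction, the engine being the translation isomorphism of Lemma \ref{ss4.4}(a)--(b). The base cases are when one of $N_{x},N_{y}$ equals $\eset$ (trivial) and when both are atoms, which is exactly the SLC. For the inductive step when, say, $N_{x}$ is not an atom, choose an atomic morphism $r$ with $N_{r}\seq N_{x}$ (an atom of the finite interval $[\eset,N_{x}]$; cf.\ Lemma \ref{ss3.5}). When $N_{r}\seq N_{y}$ as well, I translate by $r$: writing $x=rx'$, $y=ry'$, Lemma \ref{ss4.4} identifies the coideal above $N_{r}$ with the set of elements of $\lsub{b}{\CL}$ orthogonal to $N_{r^{*}}$, carrying $N_{x},N_{y}$ to $N_{x'},N_{y'}$ and preserving joins; the instance $(x',y')$ is strictly smaller, since its intervals omit everything below $N_{r}$, so the inductive hypothesis applies. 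To recover orthogonality to $N_{g}$ after translating back, I apply the hypothesis twice to $(x',y')$, once with the weak-order element $N_{r^{*}}$ and once with $N_{r^{*}g}$; a short computation with the cocycle identity \eqref{eq2.2.2} shows $N_{x'},N_{y'}$ are orthogonal to both, and that orthogonality of $N_{x'}\vee N_{y'}$ to these two elements forces $N_{x}\vee N_{y}=r\cdot(N_{x'}\vee N_{y'})$ to be orthogonal to $N_{g}$.

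The hard part is the case in which no atomic morphism lies below both $N_{x}$ and $N_{y}$, so the translation cannot be applied to the pair directly. The remedy is to manufacture a common atom: first form the auxiliary join $w:=N_{r}\vee N_{y}$ via the inductive hypothesis applied to the smaller instance $(r,y)$, and then compute $N_{x}\vee N_{y}=N_{x}\vee w$ by translating by $r$, which now lies below both $N_{x}$ and $w$. Making this two-step reduction legitimate requires ordering instances by a lexicographic measure---primarily the size of the ambient interval $[\eset,N_{u}]$, secondarily $|[\eset,N_{x}]|+|[\eset,N_{y}]|$---so that forming $w$ decreases the secondary measure (the primary one being unchanged) while the subsequent translation strictly decreases the primary one. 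Verifying that this measure behaves correctly in every branch, and that the orthogonality bookkeeping goes through at each application, is the delicate heart of the argument, and is precisely the point that parallels the organization of \cite[Section 4]{DyWeak} and the finite-lattice criterion of \cite{BjEZ}.
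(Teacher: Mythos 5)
Your proposal is correct, and its skeleton coincides with the paper's: the trivial forward direction, the reduction of both rootoid axioms to the single pairwise statement $(\star)$ (taking $g=1_{a}$ for bounded joins, using interval finiteness to make the relevant sets finite, and iterating pairwise joins to get completeness and JOP), and an induction on interval cardinality whose engine is translation along a common atom via Lemma \ref{ss4.4}, with the deferred orthogonality computations being exactly the routine manipulations with \eqref{eq2.2.2} that the paper carries out. Where you genuinely diverge is in the organization of the induction. The paper inducts on the single parameter $n(u)=\vert[1_{a},u]\vert$ and invokes the SLC hypothesis \emph{inside every inductive step}: choosing atoms $r\leq x$ and $s\leq y$, SLC immediately yields the bridge element $z=r\vee s$ (orthogonal to $g$); then $s$ is a common atom below $z$ and $y$, and $r$ is a common atom below $z\vee y$ and $x$, so two translations, each strictly decreasing $n(u)$, finish the step, and the final identity $x\vee y=(x\vee r)\vee(s\vee y)$ closes the argument with no case analysis at all. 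You instead use SLC only at the very bottom (pairs of atoms) and repair the no-common-atom case by first forming the auxiliary join $N_{r}\vee N_{y}$, which is what forces your lexicographic measure (primary $\vert[\eset,N_{u}]\vert$, secondary $\vert[\eset,N_{x}]\vert+\vert[\eset,N_{y}]\vert$). I checked that your measure does decrease in every branch (in the hard case the auxiliary instance keeps the primary measure and drops the secondary one since $\vert[\eset,N_{x}]\vert\geq 3$ for a non-atom; the subsequent translation drops the primary one), so your ``delicate heart'' is sound; but the paper's trick of manufacturing a common atom through SLC at each step is precisely what makes that bookkeeping unnecessary. In exchange, your version isolates SLC as a pure base-case input, which is conceptually tidy; the paper's buys a shorter, single-parameter induction.
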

\begin{rem*} The condition in the  proposition  will be called the semilocal criterion (SLC).  In this, the term ``semilocal'' is intended to suggest that the condition is not entirely local, in the sense of being  expressed purely in terms of the generators, but involves also general elements of the groupoid. No corresponding general local criterion is known. \end{rem*}

\begin{proof} Make the assumptions of the proposition. Then $A$ generates $G$, by Lemma \ref{ss3.5}. Validity of SLC   is clearly  necessary for $\CR$ to be a rootoid. Conversely, suppose that the condition holds.
The following statement will be proved: \begin{conds} \item[($*$)] If $a\in \ob(G)$ and  $x,y,g\in \lsub{a}{G}$  are such that $ N_{x}\cap  N_{g}= N_{y}\cap  N_{g}=\eset$ and $ N_{x}, N_{y}$ have an upper bound $ N_{u}$ in $ \lsub{a}{\CL}$, then the join  $ N_{x}\vee  N_{y}$ exists in $ \lsub{a}{\CL}$ and satisfies $( N_{x}\vee  N_{y})\cap  N_{g}=\eset$.\end{conds}
 
 Let us first check that  ($*$)   would imply that $\CR$ is a rootoid.
 Note that (taking $g=1_{a}$), ($*$) implies that if elements  $x$, $y$ of $\lsub{a}{\leq  }$ have an upper bound, they have a least upper bound.
 This in turn implies that $\lsub{a}{\leq  }$ is a complete meet semilattice. For given a non-empty subset $X$ of $\lsub{a}{G}$, with say $x\in X$, $\meet X=\join Y$ 
 where $Y$ is the set of lower bounds of $X$; the join of $Y$ exists from the above  since $Y$ is bounded above by $x$ (and hence  in particular is finite by interval finiteness of $\CR$). This shows that ($*$) implies that $\CR$ satisfies \ref{ss4.3}(ii).  Note also that if a subset $X$ of $\lsub{a}{G}$ is bounded above, then, 
  writing $X=\set{x_{1},\ldots, x_{n}}$, one has  $ \vee_i{x_{i}}=x_{1}\vee (x_{2}\vee(x_{3}\vee \ldots (x_{n-1}\vee x_{n})))$. From this,  it follows inductively  that ($*$) implies that $\CR$ satisfies JOP, and hence is a rootoid.

  Now we prove ($*$) by induction on the cardinality $n(u):=\vert [1_{a},u]\vert$ of the interval $[1_{a},u]$ in $\lsub{a}{\leq  }$.
The statement is  trivial if 
  $u=1_{a}$ or $x=1_{a}$ or $y=1_{a}$. Assume inductively that the statement holds if $n(u)<m$. Suppose that $n(u)=m$, $x\neq 1_{a}$ and $y\neq 1_{a}$. Choose $r,s\in \lsub{a}{A}$ so $1_{a}<r\leq x$ and $1_{a}< s\leq y$. Since $u$ is an upper bound of $r$, $s$ in $\lsub{a}{G}$, $z:= r\vee s$ exists  and satisfies $z\leq u$, $ N_{z}\cap  N_{g}=\eset$.
  
  Since $s\leq z$, we have $ N_{z}= N_{s}\dotcup s( N_{s^{*}z})$.
  Since $ N_{s}\cap  N_{g}\seq  N_{y}\cap  N_{g}=\eset$, we have $ N_{s^{*}g}= N_{s^{*}}\dotcup s^{*}( N_{g})$. It follows that $ N_{s^{*}z}\cap  N_{s^{*}g}=\eset$.
  Similarly,  $ N_{s^{*}y}\cap  N_{s^{*}g}=\eset$. Let $s\in \lrsub{a}{A}{b}$. Now we use Lemma \ref{ss4.4}.
  In weak order on $\lsub{b}{G}$, 
  $s^{*}u$ is an upper bound of  $s^{*}y$ and $s^{*}z$. Also, $[1_{b},s^{*}u]=s^{*}[s,u]\sneq s^{*}[1,u]$ so $n(s^{*}u)<n(u)$. By induction, there is $w\in \lsub{a}{G}$ such that
  $s^{*}z\vee s^{*}y=s^{*}w$ and $ N_{s^{*}w}\cap  N_{s^{*}g}=\eset$. In particular, $ N_{s^{*}w}\cap  N_{s^{*}}=\eset$, so $ N_{w}= N_{s}\dotcup s( N_{s^{*}w})$.
  It follows by Lemma \ref{ss4.4} that $z\vee y=w$ in $ \lsub{a}{G}$ (since the join $z\vee y$ is the same whether calculated  in $\lsub{a}{G}$ or in the  principal order coideal of $\lsub{a}{G}$ generated by $s$). Also, it is easily seen from above that $ N_{w}\cap  N_{g}=\eset$.   
 
  A similar argument to that in the last paragraph,   with $s$ replaced by $r$,
  $z$ replaced by $w$, and $y$ replaced by $x$, shows that $v:=w\vee x$ exists in $\lsub{a}{G}$  and satisfies $ N_{v}\cap  N_{g}=\eset$.
  The proof is completed by noting that  \begin{equation*} v=x\vee w=x\vee(z\vee y)=x\vee(r\vee s)\vee y=(x\vee r)\vee(s\vee y)=x\vee y.\qedhere\end{equation*}
 \end{proof}
   \subsection{Another proof of Theorem \ref{ssa6.7}} 
   \label{ssa6.10}
The  SLC can be used to give an alternative   proof that $\CC_{(W,S)}$ is a rootoid, with the advantage of being  self-contained except for well-known properties of shortest coset representatives of (rank two) standard parabolic subgroups.  
The argument at the beginning of the proof of Theorem \ref{ssa6.7} implies that $\CC$ is faithful and principal.  It is interval finite by Lemma  \ref{ss3.6}, and $A=S$ by Lemma \ref{ss3.7}. As in the proof of Theorem \ref{ssa6.7},
it is  possible to use the description of weak right order of $\CC$ in terms of $l$ given by Corollary \ref{ss3.14}.

To show that $\CC$ is a rootoid, it will suffice to verify the SLC. In terms of $l$, SLC requires that if $r,s\in S$ and $g\in W$ with $l(r^{-1}g)>l(g)$ and  $l(s^{-1}g)>l(g)$ and $r,s$ have 
 have an upper bound $x$ in weak right order  on $W$, then the join $y=r\vee s$ exists in weak right order on $W$ and satisfies $l(y^{-1}g)=l(y)+l(g)$.
Now  $l(r^{-1}x)<l(x)$ and $l(s^{-1}x)<l(x)$. Let $x'\in W_{J}x$, where $J:=\set{r,s}$, be the shortest coset representative in $W_{J}x$ i.e.  $x'\in W_{J}x$ and $l(yx')=l(y)+l(x')$ for all $y\in W_{J}$. Write 
$x=yx'$ where $y\in W_{J}$. Then $l(s^{-1}y)<l(y)$ and $l(r^{-1}y)<l(y)$, which implies that $W_{J}$ is finite and  $y$ is the longest element of $W_{J}$. It is easy to check that  $y=r\vee s$. The assumptions imply that $g$ is the shortest coset representative in  $W_{J}g$.
Hence $l(y^{-1}g)=l(y)+l(g)$ as required. This implies that $\CC$ is a  principal rootoid, with simple generators $S$. The proof of the criterion in Theorem \ref{ssa6.7} for completeness of $\CC$ is as in the earlier proof.

 \subsection{Rootoids from simplicial  hyperplane arrangements}
\label{ssa6.11}
Let $V$ be a real Euclidean space with inner product $\mpair{-,-}$. An arrangement  $\CH$ of hyperplanes in $V$ is  a  set of affine hyperplanes in $V$. (For background on hyperplane arrangements needed here, a convenient source is  
\cite{BjEZ}, which we shall  follow  as regards terminology and notation since the main result of this subsection is essentially a direct translation into protorootoid terminology of a main result of that paper)   The arrangement  $\CH$ is said to be central if  all the hyperplanes in $\CH$ are linear hyperplanes  (i.e contain $0$) and to be   essential if their normals span $V$. 

Fix a finite, essential, central hyperplane arrangement $\CH$ in $V$, with $V\neq 0$ to avoid trivialities. 
The connected components of $V\setminus \cup_{H\in \CH}H$ (in the 
metric topology induced by the inner product $\mpair{-,-}$) are called 
the chambers of $\CH$.  One says that  two chambers $D, E$ are 
separated by the hyperplane  $H\in \CH$ if they are in different 
connected components of $V\sm H$. Two chambers are said to be adjacent if they are separated by a unique hyperplane $H\in \CH$.

There are finitely many chambers. Consider the (unique up to isomorphism) connected, simply connected  groupoid $G$ with the chambers of $\CH$ as its objects.
For two chambers $D,E$, let $f_{E,D}\colon D\rightarrow E$ be the unique morphism in $G$ from $D$ to $E$.

Define an object   $R=C_{\CH}:=(G,\Phi)$ of $\grpdc\text{\rm-} \setc_{\pm}$ as follows. Let  $U$
 be the set of all unit normal vectors to hyperplanes $H\in \CH$, regarded as indefinitely signed set 
with action of the sign group $\set{\pm }$ by multiplication.
 Next,   define a definitely signed set 
  $\lsub{D}{ \Phi}$ for each chamber $D$, with underlying indefinitely signed set 
  $U$ and with  \begin{equation} \label{eqa6.10.1}\lrsub{D} {\Phi}{+}:=\mset{u\in \lsub{D} {\Phi}\mid \mpair{u,D}\subseteq \real_{>0}}.\end{equation} The functor $\Phi\colon G\to \setc_{\pm}$ is defined by
  setting $\Phi(D):=\lsub{D}{\Phi}$ as definitely signed set  and requiring that the composite of  $\Phi$ with the forgetful functor $\setc_{\pm}\to \setc_{\set{\pm}}$ be equal to the constant functor $G\to \setc_{\set{\pm}}$ with  value $U$. This completes the definition of $R=C_{\CH}$.
 Define the associated set protorootoid $\CC_{\CH}':=\mathfrak{L}(R)$ and protorootoid $\CC_{\CH}:=\mathfrak{I}(\mathfrak{L}(R))$

\begin{thm*} The signed groupoid-set $R=C_{\CH}$ of a finite, central, essential hyperplane arrangement $\CH$ in the real Euclidean space $V$ is  rootoidal if and only if  $\CH$ is a simplicial arrangement i.e. if and only if  each chamber is an open simplicial cone in $V$. In that case, $R$  is  complete and  principal and the simple generators of the underlying groupoid are the morphisms between adjacent chambers.
\end{thm*}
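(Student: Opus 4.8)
The plan is to identify the weak orders of $R=C_{\CH}$ explicitly, read off the structural properties that hold for \emph{every} finite central essential arrangement, and then pin the two substantive rootoid axioms to simpliciality by translating the main results of \cite{BjEZ} through the semilocal criterion of Proposition \ref{ssa6.9}. First I would compute the cocycle. For chambers $D,E$ the morphism $g=f_{D,E}\colon E\to D$ has codomain $D$, and the underlying map of $\Phi(g)$ on $U$ is the identity (the composite of $\Phi$ with the forgetful functor is constant), so by the definition \eqref{eqa5.7.1} of $\Phi_{g}$,
\begin{equation*}
\Phi_{g}=\lrsub{D}{\Phi}{+}\cap \Phi(g)\bigl(\lrsub{E}{\Phi}{-}\bigr)
=\mset{u\in U\mid \mpair{u,D}\seq \real_{>0},\ \mpair{u,E}\seq\real_{<0}},
\end{equation*}
i.e.\ $\Phi_{g}$ is the set of unit normals to the hyperplanes separating $D$ from $E$, oriented towards $D$. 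Hence the weak order $\lsub{D}{L}=\mset{\Phi_{g}\mid g\in\lsub{D}{G}}$ at $D$ is exactly the \emph{poset of regions} of $\CH$ with base chamber $D$ of \cite{BjEZ}, ordered by inclusion of separation sets. Since $U$ is finite, each $\lsub{D}{\L}=\wp(\lrsub{D}{\Phi}{+})$ is finite, so $\CR:=\CC_{\CH}$ is interval finite; it is faithful because $\Phi_{f_{D,E}}=\eset$ forces $E=D$, whence $f_{D,E}=1_{D}$ as $G$ is simply connected. By Lemma \ref{ss3.5}, $\CR$ is atomically generated, and a short gallery argument shows the atomic morphisms are precisely the $f_{D,E}$ between adjacent chambers, whose separation sets are singletons. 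As singletons are atoms of the power-set ring $\lsub{D}{\L}$, we get $A_{\CR}=S_{\CR}$, so by Lemma \ref{ss3.7} $\CR$ is principal with the adjacency morphisms as simple generators. None of this uses simpliciality.

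It remains to tie conditions \ref{ss4.3}(ii)--(iii) and completeness to simpliciality. Each $\lsub{D}{L}$ has least element $\eset$ and greatest element $\lrsub{D}{\Phi}{+}$ (the separation set of the antipodal chamber $-D$), so, being a finite poset with top and bottom, it is a complete lattice if and only if it is a meet semilattice; thus \ref{ss4.3}(ii) holding at every $D$ is equivalent to completeness and to each poset of regions being a lattice. For the forward direction, if $\CR$ is a rootoid then every $\lsub{D}{L}$ is a lattice, whence by the theorem of \cite{BjEZ} that a poset of regions which is a lattice has a simplicial base chamber, every chamber $D$ is simplicial; i.e.\ $\CH$ is simplicial, and moreover $\CR$ is complete.

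Conversely, for simplicial $\CH$ the complementary theorem of \cite{BjEZ} gives that $\lsub{D}{L}$ is a lattice for every $D$, yielding \ref{ss4.3}(ii) and completeness, so only the JOP \ref{ss4.3}(iii) remains. Here I would invoke the semilocal criterion (Proposition \ref{ssa6.9}): since joins now exist, it suffices to check that two atomic morphisms $r,s\in\lsub{D}{A}$ each orthogonal to a common $g$ and possessing a common upper bound have a join $N_{r}\vee N_{s}$ in $\lsub{D}{L}$ that is again orthogonal to $N_{g}$. Following the pattern of \S\ref{ssa6.10}, I would reduce this to the rank-two localization $\CH^{Z}$ of $\CH$ at the codimension-two flat $Z=H_{r}\cap H_{s}$ carried by the two walls of $D$ supporting $r$ and $s$: every rank-two central arrangement is simplicial and its poset of regions is a dihedral lattice, from which the join and its orthogonality to $N_{g}$ are read off, the compatibility of the local and global joins being exactly where simpliciality of $D$ (not merely of the base chamber) is used. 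The main obstacle is precisely this last step — proving that for a simplicial chamber the join of two atoms is computed correctly by the rank-two localization and that the resulting separation set stays disjoint from $N_{g}$. The reverse implication and the lattice property itself are citations of \cite{BjEZ}, and faithfulness, interval finiteness and principality are formal; the genuine geometric work lies in the local-to-global verification of the JOP via the semilocal criterion, mirroring the Coxeter argument of \S\ref{ssa6.10} and the JOP proof of \cite{DyWeak}.
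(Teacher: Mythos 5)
Your identification of the weak orders with the posets of regions, your faithfulness and principality arguments, and your use of \cite{BjEZ} for the equivalence (lattice for every base chamber) $\iff$ (simplicial) all match the paper's proof in substance. The genuine gap is in your treatment of the JOP, which you defer to a semilocal-criterion argument via rank-two localizations and then explicitly leave unfinished (``the main obstacle is precisely this last step''). That step is not only unresolved; the whole strategy misdiagnoses where simpliciality enters. The JOP for $C_{\CH}$ holds for \emph{every} finite, central, essential arrangement, simplicial or not, by a one-line complementation argument: since $\CH$ is central, $-A$ is a chamber whenever $A$ is, and
\begin{equation*}
\Phi_{f_{E,-A}}=\lrsub{E}{\Phi}{+}\sm\Phi_{f_{E,A}},
\end{equation*}
so the complement of each element of the weak order $\lsub{E}{L}$ is again an element of $\lsub{E}{L}$. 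Hence if $\Phi_{f_{E,A_{i}}}\cap\Phi_{f_{E,A}}=\eset$ for all $i$, then each $\Phi_{f_{E,A_{i}}}\seq\Phi_{f_{E,-A}}$, and since $\Phi_{f_{E,-A}}$ lies in the poset $\lsub{E}{L}$ it is an upper bound there; by the definition of join, $\join_{i}\Phi_{f_{E,A_{i}}}\seq\Phi_{f_{E,-A}}$, i.e. the join is disjoint from $\Phi_{f_{E,A}}$. This is exactly the complemented-protomesh mechanism of Lemma \ref{ss4.1}(a), and it requires no simpliciality, no appeal to Proposition \ref{ssa6.9}, and no rank-two analysis.

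Consequently the logical architecture should be: faithful, principal, and JOP hold unconditionally, so $R$ is rootoidal if and only if condition \ref{ss4.3}(ii) holds; each $\lsub{E}{L}$ is finite with maximum element $\Phi_{f_{E,-E}}$, so (ii) is equivalent to each poset of regions being a lattice, which by \cite{BjEZ} is equivalent to simpliciality; and completeness then comes for free from the maximum element. Your claim that ``the genuine geometric work lies in the local-to-global verification of the JOP'' inverts this: the only place simpliciality is used is the citation of \cite{BjEZ} for the lattice property, and the only geometric input specific to central arrangements needed for JOP is the existence of antipodal chambers. If you insist on your route, you would still owe a complete proof that the join of two atoms is computed by the localization at the codimension-two flat and that this join stays disjoint from $N_{g}$ --- real work that your proposal does not carry out --- whereas the complementation argument disposes of JOP in a single step.
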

\begin{proof} For $E,D\in \Phi$, $\Phi_{f_{E,D}}$ is the set of all
unit normals in $\lrsub{E}{\Phi}{+} $  to hyperplanes  in $\CH$ which separate
$E$ and $D$. Thus, the cardinality $\vert \Phi_{f_{E,D}}\vert $ is the number of hyperplanes in $\CH$ separating $D$ and $D$, which is called the \emph{combinatorial distance} between $E$ and $D$. The combinatorial distance is $0$ if and only if  $E=D$, so $R$ is strongly   faithful  i.e $R$ satisfies
 \ref{ssa5.7}(i).
Now  $D\lsub{E}{\leq  } D'$ in the right weak order $\lsub{E}{\leq  }$ on $\lsub{E}{G}$ if and only if  every hyperplane in $\CH$ separating $E$ and $D$ also separates $E$ and $D'$. Thus, the right weak order   $\lsub{E}{\leq  }$ on $\lsub{E}{G}$ is just
  the poset of regions (chambers) of $\CH$ ordered  by combinatorial distance from  the base chamber $E$, as  studied in \cite{BjEZ}.
  
   Let $S$  be the set of all simple morphisms  in $\mor( G) $ i.e. the set of all $f_{E,D}$ such that $\vert \Phi_{f_{E,D}}\vert=1 $.     Restating basic facts from \cite{BjEZ} in the terminology here,  $S$ is a set of groupoid generators of $G$ and 
  \begin{equation}\label{eq:6.11.2}  \vert \Phi_{g}\vert =l_{S}(g), \text{ \rm for $g\in \wh G$}.
  \end{equation} Hence $R$ is a principal signed groupoid-set in general.
  
  For regions $E,D$ of $\CH$, note that   $-D$ is a region and 
  \begin{equation}\label{eq:6.11.3} \Phi_{f_{E,-D}}=\lrsub{E}{\Phi}{+} \sm\Phi_{f_{E,D}}.\end{equation} 
 To prove that  $R$ satisfies the JOP  (i.e. \ref{ss4.3}(iii)),
  it is necessary to show that if $E$, $A_{i}$ and $A$ are chambers such that
  $\Phi_{f_{E,A_{i}}}\cap \Phi_{f_{E,A}}=\eset$ for all $i$ and 
  $f_{E,B}:=\join_{i}f_{E,A_{i}}$ exists in $(\lsub{E}{G},\lsub{E}{\leq})$ then
  $\Phi_{f_{E,B}}\cap \Phi_{f_{E,A}}=\eset$. But under those assumptions,  
  $\Phi_{f_{E,A_{i}}}\seq \Phi_{f_{E,-A}}$ for all $i$ so by definition of join,
  $\Phi_{f_{E,B}}\seq  \Phi_{f_{E,-A}}$ and therefore $\Phi_{f_{E,B}}\cap \Phi_{f_{E,A}}=\eset$ (cf. Lemma  \ref{ss4.1}).
  
  From the above, it now  follows that $R$ is rootoidal  if and only if it satisfies \ref{ss4.3}(ii) or equivalently 
   if and only if  for every  chamber, the poset of 
   regions of $\CH$ oriented from that chamber is a complete  semilattice. The 
  weak order $ \lsub{E}{\leq  } $ at $E$ is finite, and  clearly  has a maximum   element $-E$, so it is a complete meet semilattice if and only if it is a lattice.
     By 
   \cite{BjEZ}, $ \lsub{E}{\leq  } $ is a lattice  for all $E$  if and only if 
  $\CH$ is simplicial.  The argument shows  that if $R$ is  rootoidal,   it  is complete and principal.     \end{proof}
    \begin{rem*}  Let $W$ of a finite Coxeter group $W$ acting (with no pointwise fixed subspace of positive dimension)   as reflection group on a  Euclidean space. Consider  the reflection arrangement $\CH$, which consists of the reflecting hyperplanes of $W$.         This arrangement  is well known to be simplicial.
    Fix a fundamental chamber $C$  of the arrangement; the set  $S$ of reflections  of  $W$ in walls of $C$ makes $(W,S)$ a Coxeter system.
 The simply connected groupoid underlying  $R_{\CH}$ is  canonically isomorphic to the universal  covering groupoid of  $W$   (using the natural bijection between chambers and elements of $W$ afforded by the choice of $C$).     One can check that this groupoid  isomorphism underlies an isomorphism of the  rootoid $\CC_{\CH}$  attached to $\CH$ with  the universal covering rootoid   of  $\CC_{(W,S)}$.\end{rem*}

  \subsection{Other examples of rootoids} \label{ssa6.12}
 To partly offset any misleading impression due to the fact that  the preceding examples were all principal rootoids,   this  subsection describes some non-principal rootoids of a quite different character. \begin{exmp*}    
    (1) Let $G$ denote the additive group $\real$  with standard partial order.  Let $P$ denote the cone of non-negative elements 
    $P:=\mset{x\in G\mid x\geq 0}$.  Denote addition in $G$ by $+$ and addition 
    (symmetric difference)  in the Boolean ring $\wp(G)$ by $\dotplus$ to avoid 
    confusion. Attached to $(G,P)$ there is a protorootoid $\CR:=(G,\L,N)$    as follows.  The 
      functor  $\L\colon G\to\bringc$ gives the translation action of $G$ on subsets of $G$; formally,  $\L$ sends the unique object of $G$ to $\wp(G)$ 
  and \begin{equation*} \L(\l)(A):=A+\l=\mset{a+\l\mid a\in A}\end{equation*} for any morphism  $\l\in G$ and  any $A\in \wp(G)$.  The 
  cocycle $N$ is the coboundary
  $N\in B^{1}(G,\L)$ defined  by $N(\l)=(\l+P)\dotplus  P$.
  Note that either $N(\l)$ is empty, or it is a closed-open interval in $\real$ with $0$ as either the (closed)  left  endpoint or  (open) right endpoint.
   It  is straightforward to check that  $\CR$ is a rootoid, which is not interval finite.   The unique weak right order is  the partial order $\preceq$ of $\real$ such that $\lambda\preceq \mu$ if $\lambda\mu\geq 0$ and $\vert \lambda\vert\leq \vert \mu \vert$,  where $\vert \nu\vert$ denotes the absolute value of $\nu\in \real$.

  (2)  Let $V$ be a real Euclidean space.
 Let $G=\text{\rm O}(V)$  denote the corresponding real orthogonal group.
 A vector space total ordering of $V$ is given by the lexicographic ordering on coordinate vectors with respect to a chosen ordered orthonormal basis  of $V$. The natural action of $G$ on the set of  vector space total orderings $\leq$ of $V$ is simply transitive. Fix such an ordering $\leq$. Let $\mathbb{S}:=\mset{v\in V\mid \mpair{v,v}=1}$ denote the unit sphere in $V$ and   $\mathbb{S}_{+}:=\mset{v\in  \mathbb{S}\mid v>0}$. 
 This makes $\mathbb{S}$ into  a definitely signed set.
The group  $G$ and sign group $\set{\pm 1}$ have natural commuting actions  on    $\mathbb{S}$.   Let $\Phi\colon G\to \setc_{\pm}$ be the representation of $G$
 taking the value $\mathbb{S}$  at the unique object of $G$, and  with the natural  action of $G$ on the unit sphere as underlying representation of $G$ in $\setc$.
 This defines  a signed groupoid-set 
  $R=C_{\text{\rm O}(V)}:=(G,\Phi)$. Let $\CC_{\text{\rm O}(V)}:=\mathfrak{L}(R)$ and $\CC_{\text{\rm O}(V)}':=\mathfrak{I}(\mathfrak{L}(R))$ 
   denote the associated set protorootoid and protorootoid, respectively. 
    It is shown in \cite{Pilk} that $C_{\text{\rm O}(V)}$ is complete and rootoidal.  
  
    For example, take $V=\real^{2}$ with dot product. Then   
 $\mathbb{S}=\mset{(x,y)\in \real^{2}\mid x^{2}+y^{2}=1}$ and  one can take (for instance) 
 \begin{equation*} \mathbb{S}_{+}:=\mset{(x,y)\in \mathbb{S}\mid y>0 \text{ or ($y=0$ and $x>0$)}}.\end{equation*} 
 Define a bijection $f\colon [0,\pi)\to\mathbb{S}_{+}$ by
 $f(t)=(\cos(t),\sin(t))$. The  sets
 $\Phi_{g}$ for $g\in \text{\rm O}(V)$ are $\eset$, $\mathbb{S}_{+}$,
 $\set{(1,0)}$, $ \mathbb{S}_{+}\sm\set{(1,0)}$
 and  the (open, closed or open-closed) arcs $f(I)$ where  $I$ is a subinterval of $[0,\pi)$ of the form
 $[0,t)$, $[0,t]$, $(t,\pi)$ or  $[t,\pi)$ for some $t\in(0,\pi)$. 
  It is easy  to check from this that   
$C_{\text{\rm O}(V)}$  is a  complete,  regular, saturated, pseudoprincipal, rootoidal signed groupoid-set.   Clearly $C_{\text{\rm O}(V)}$ is not interval finite. The   orthogonal  reflection which  fixes the $y$-axis pointwise is the unique simple morphism. 
 
 \end{exmp*}

   \section{Complete rootoids}\label{sa7}

 \ssect{} \label{ssa7.1}
 The following proposition gives analogues of standard properties of longest elements of finite Coxeter systems.
 Note that the abridgement of any faithful, complete protorootoid satisfies the hypotheses.  \begin{prop*} Suppose that $\CR=(G,\L,N)$ is a  faithful, complete, abridged protorootoid. Let $a\in \ob(G)$. \begin{num}
 \item  The weak order    $\lsub{a}{\leq  }$ has a maximum element $\o(a)$. It satisfies  $\o(a)=\join \lsub{a}{G}$ in $\lsub{a}{G}$  and $N(\o(a))=\join \lsub{a}{\CL}$ in $\lsub{a}{\CL}$.  
  \item  $e_{a}:=N(\o(a))$ is an identity element of  (the Boolean algebra) $\lsub{a}{\L}$. 
 \item   $\CR$ is a  unitary protorootoid.
  \item Let $a':=\domn(\o(a))\in \ob(G)$, so  that $\omega(a)\in \lrsub{a}{G}{a'}$. Then $\o(a)^{*}=\o(a')$.
 \item The map $h\mapsto \o(a)h\colon \lsub{a'}{G}\to \lsub{a}{G}$ is an order anti-isomorphism in the weak right orders.
  \item For $g\in \lrsub{b}{G}{a}$, $N(g\o(a))=N(g)^{\cp}:=e_{b}+N(g)$ in 
  $\lsub{b}{\L}$.
 \item $\lsub{a}{\CL}$ is a complete ortholattice   with orthocomplement  given by restriction of  complement 
 $A\mapsto A^{\cp}$ in the Boolean algebra $\lsub{a}{\L}$.
 \item $\CR$ is a complemented, complete rootoid.
 \end{num}
\end{prop*}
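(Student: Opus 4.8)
The plan is to prove parts (a)--(h) in order, the whole argument hinging on part (b); once $e_a := N(\o(a))$ is identified as the ring identity of $\lsub a \L$, the remaining parts follow formally from functoriality of $\L$ and the cocycle condition \eqref{eq1.13.2}. For (a), completeness makes each $\lsub a \CL$ a complete lattice with maximum $\join \lsub a \CL$, and since $\CR$ is faithful the order isomorphism $x \mapsto N(x)\colon \lsub a G \to \lsub a \CL$ of \ref{ss2.5} transports this to a maximum $\o(a) = \join \lsub a G$ of $\lsub a G$ with $N(\o(a)) = \join \lsub a \CL$.

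The crux is (b). Every $A \in \lsub a \CL$ satisfies $A \seq e_a$, hence $e_a \cap A = A$, so the set $\mset{x \in \lsub a \L \mid e_a \cap x = x}$ is a subring of $\lsub a \L$ containing $\lsub a \CL$. Because $\CR$ is abridged, $\lsub a \CL$ generates $\lsub a \L$ as a ring, forcing this subring to be all of $\lsub a \L$, so $e_a$ is an identity. I expect this to be the main obstacle, or at least the conceptual heart: it is the single point where the abridgement hypothesis is indispensable, since in a non-abridged protorootoid the maximum of the weak order need not be the unit of the ambient ring. Part (c) is then immediate, as each $\L(g)$ is an isomorphism of Boolean rings ($G$ being a groupoid) and so sends unit to unit, i.e.\ $\L(g)(e_a) = e_b$ for $g \in \lrsub b G a$; thus $\L$ factors through $\balgc$.

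Parts (d)--(f) are short cocycle computations leaning on (c). For (d), one has $N(\o(a)^*) = \L(\o(a)^*)(e_a) = e_{a'}$, and faithfulness gives $\o(a)^* = \o(a')$. For (f), $N(g\o(a)) = N(g) + \L(g)(e_a) = N(g) + e_b = N(g)^\cp$. For (e), the computation $N(\o(a)h) = e_a + \L(\o(a))(N(h)) = (\L(\o(a))(N(h)))^\cp$ exhibits $h \mapsto \o(a)h$, after transport through faithfulness, as the composite of the order isomorphism $\L(\o(a))$ with the complementation anti-automorphism of $\lsub a \L$, hence an order anti-isomorphism with inverse $g \mapsto \o(a')g$.

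The key observation for (g) is that (f) already yields closure of $\lsub a \CL$ under complementation: for $x \in \lsub a G$ with $c := \domn(x)$, the morphism $x\o(c)$ lies in $\lsub a G$ and $N(x\o(c)) = N(x)^\cp$, so $N(x)^\cp \in \lsub a \CL$. Thus $A \mapsto A^\cp$ restricts to an order-reversing involution of $\lsub a \CL$; since $0 = N(1_a)$ and $1 = e_a = N(\o(a))$ both lie in $\lsub a \CL$ and it is order-embedded in $\lsub a \L$, the orthocomplement identities $A \wedge A^\cp = 0$ and $A \vee A^\cp = 1$ transfer from $\lsub a \L$ to the sublattice, making $\lsub a \CL$ a complete ortholattice. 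Finally, for (h), parts (c) and (g) say precisely that each protomesh $(\lsub a \L, \lsub a \CL)$ is complemented in the sense of \ref{ss4.1}, so Lemma \ref{ss4.1}(a) delivers the JOP; together with faithfulness and the complete meet semilattice property this shows $\CR$ is a rootoid, and it is complemented and complete by definition \ref{ss3.3}(b).
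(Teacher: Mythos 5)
Your proposal is correct and follows essentially the same route as the paper's proof: (a) via completeness plus the faithfulness order isomorphism, (b) via the subring-generation argument from abridgedness (which the paper leaves implicit in "(b) follows"), (c) from $\L(g)$ being a ring isomorphism between unital Boolean rings, (d)--(f) by short cocycle computations, (g) by using (f) to close $\lsub{a}{\CL}$ under complementation, and (h) by recognizing $(\lsub{a}{\L},\lsub{a}{\CL})$ as a complemented protomesh and invoking Lemma \ref{ss4.1}(a) for the JOP. The only differences are cosmetic (e.g.\ in (d) you compute $N(\o(a)^{*})=e_{a'}$ directly via $N(g^{*})=g^{*}N(g)$, where the paper computes $N(\o(a)\o(a'))=0$), and these are equivalent one-line variants.
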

\begin{proof} Note that $\lsub{a}{\CL}$ has a maximum element 
by definition,  because $\CR$ is complete. Since $\CR$ is faithful, 
there is an order isomorphism 
$x\mapsto N(x)\colon \lsub{a}{\CL}\cong\lsub{a}{G}$, and the rest 
of (a) follows trivially. From (a), for all $g\in \lsub{a}{G}$ one has 
 $N(g)\seq N(\o(a))$  so $N(g)\cap N(\o(a))=N(g)$. Since $\CR$ is 
 abridged, $\lsub{a}{\L}$ is generated as ring by $\lsub{a}{\CL}$ 
 and (b) follows.
 
 For any $g\in \lrsub{a}{G}{b}$, $\L(g)\colon \L(b)\to \L(a)$ is a  
 homomorphism of Boolean rings. Hence it is order preserving.
 Since it is bijective,  with inverse $\L(g^{*})$, it must preserve 
 maximum elements i.e. it is a homomorphism of Boolean algebras. This 
 proves (c).
 
 Next, observe that by (b) and (c), 
  \begin{equation*} N(\o(a)\o(a'))=N(\o(a))+\o(a)N(\o(a'))=e_{a}+\o(a)(e_{a'})
  =e_{a}+e_{a}=0=N(1_{a}).\end{equation*}
 Since $\CR$ is faithful, $\o(a)\o(a')=1_{a}$ and (d) follows.

For any $h\in \lsub{a'}{G}$, one has $N(\o(a)h)=N(\o(a))+\o(a)(N
(h))=\bigl(\o(a)N(h)\bigr)^{\cp}$. This immediately implies that the 
map in (e) is weak right order reversing. Then (e) follows 
 since, by symmetry,  an inverse order anti-isomorphism is given 
 by $k\mapsto \o(a')k\colon \lsub{a}{G}\to \lsub{a'}{G}'  $.
 
 For $g$ as in (f), the cocycle condition gives 
 \begin{equation*}
 N(g\o(a))=N(g)+gN(\o(a))=N(g)+g(e_{a})=N(g)+e_{b}=N(g)^{\cp}
 \end{equation*} which proves (f). For $A\in \lsub{b}{\CL}$, write 
 $A=N(g)$ where $g\in \lrsub{b}{G}{a}$. Then by (f), $A^{\cp}=N
 (g\o(a))\in \lsub{b}{\CL}$. This easily implies (g).  For (h), note  that $(\lsub{a}{\L},\lsub{a}{\CL})$ is a complemented protomesh, by (g). Therefore, JOP follows by Lemma \ref{ss4.1}(a) and $\CR$ is a rootoid.  Also by  (g),  $\CR$ is complete and complemented, proving (h). 
 \end{proof}
\ssect{} \label{ssa7.2} Proposition \ref{ssa7.1}  applies  to the abridgement $\CR^{a}$ of any  complete rootoid $\CR$.
The next  corollary describes, for such $\CR$, the analogue of the  automorphism of a Coxeter system defined  by conjugation by the longest element.

\begin{cor*} Let $\CR=(G,\L,N)$ be a complete  rootoid.
\begin{num}\item There is a unique functor
$d\colon G\to G$ such that there is  natural isomorphism $\o\colon d\to \Id_{G}$ with component  $\o_{a}=\o(a)$ at each $a\in \ob(G)$.  \item Let $D:=(d,\L\o^{*})$. Then $D\in \Aut_{\rlec}(\CR)$ and $D^{2}=\Id_{\CR}$.

\end{num}\end{cor*}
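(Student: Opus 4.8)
The plan is to dispose of part (a) by a direct construction and then devote the bulk of the work to part (b), where $D=(d,\L\o^{*})$ must be shown to be a morphism in $\prootc$, $\rootc$, $\rtd$ and $\rlec$ and to satisfy $D^{2}=\Id_{\CR}$. For (a), I would first observe that any natural isomorphism $\o\colon d\to\Id_{G}$ with $\o_{a}=\o(a)$ is forced: since $\o_{a}\colon d(a)\to a$ we must have $d(a)=\domn(\o(a))=:a'$, and naturality at $g\in\lrsub{a}{G}{c}$ forces $d(g)=\o(a)^{*}g\,\o(c)$. This gives uniqueness, so only existence remains, which I would settle by defining $d$ through these formulas and checking $d(1_{a})=\o(a)^{*}\o(a)=1_{a'}$ and, for $g_{1}\colon e\to c$, $g_{2}\colon c\to a$, $d(g_{2}g_{1})=\o(a)^{*}g_{2}g_{1}\o(e)=d(g_{2})d(g_{1})$, so that $d$ is a functor and $\o\colon d\to\Id_{G}$ a natural isomorphism by construction.

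For (b) the first and decisive step is $D\in\prootc$. The component of $\mu:=\L\o^{*}$ at $a$ is $\mu_{a}=\L(\o(a)^{*})\colon\L(a)\to\L(a')=\L(d(a))$, and naturality of $\mu$ is automatic as it is a whiskering of $\o^{*}$ by $\L$. The condition needing proof is $\mu N=Nd$, i.e.\ $\o(a)^{*}N(g)=N(d(g))$ for $g\in\lrsub{a}{G}{c}$. Expanding $N(d(g))=N(\o(a)^{*}g\,\o(c))$ by the cocycle identity and using $N(\o(a)^{*})=\o(a)^{*}N(\o(a))$ (see \ref{ss1.13}), one finds $N(d(g))=\o(a)^{*}N(\o(a))+\o(a)^{*}N(g)+\o(a)^{*}g\,N(\o(c))$; since $\L(\o(a))$ is injective, the desired equality is equivalent to the single identity
\begin{equation*}
\L(g)\bigl(N(\o(c))\bigr)=N(\o(a))\qquad(g\colon c\to a),
\end{equation*}
asserting that the longest-element cocycle values form an $\L$-invariant family. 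This is the crux of the proof.

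I would establish this identity using Proposition \ref{ssa7.1}(a), which gives $N(\o(c))=\join\lsub{c}{\CL}$, together with Proposition \ref{ss2.9}(a), which presents $\L(g)$ as a Boolean-ring isomorphism carrying $\lsub{c}{\CL}$ bijectively onto $N(g)+\lsub{a}{\CL}$. Because a Boolean isomorphism preserves joins that exist, $\L(g)(N(\o(c)))=\join\bigl(N(g)+\lsub{a}{\CL}\bigr)$ in $\L(a)$, and a short computation identifies this join: every element $N(g)+A$ with $A\in\lsub{a}{\CL}$ lies below $N(\o(a))$ (as $N(g),A\seq N(\o(a))$), while the two particular elements $N(g)$ and $N(g)+N(\o(a))=N(\o(a))\sm N(g)$ already join to $N(\o(a))$; hence the join equals $N(\o(a))$. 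This yields $\mu N=Nd$, so $D\in\prootc$.

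With $D$ a protorootoid morphism the rest is comparatively routine. I would record $d^{2}=\Id_{G}$: on objects $d(a')=\domn(\o(a'))=\domn(\o(a)^{*})=a$ by Proposition \ref{ssa7.1}(d), and on morphisms the repeated factors $\o$ cancel; the same cancellation applied to $\mu_{a'}\mu_{a}=\L(\o(a)\o(a)^{*})=\Id_{\L(a)}$ gives $D^{2}=\Id_{\CR}$, so in particular $d$ is an automorphism of $G$. Combining $\mu N=Nd$ with the bijectivity of $\lsub{a}{d}\colon\lsub{a}{G}\to\lsub{a'}{G}$ shows that, under the order isomorphisms $N$, the map $\lsub{a}{d}$ corresponds to the restriction of $\L(\o(a)^{*})$ to an order isomorphism $\lsub{a}{\CL}\to\lsub{a'}{\CL}$; hence each $\lsub{a}{d}$ is an isomorphism in $\csl_{0}$ and $D$ is a morphism in $\rootc$, the image of each $\lsub{a}{d}$ being all of $\lsub{a'}{G}$ so that the embedding condition of $\rlec$ is immediate. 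For the AOP I would reuse the mechanism of Lemma \ref{ss4.9}(d): orthogonality of two morphisms with common codomain is, via \eqref{eq2.2.2} and Lemma \ref{ss2.7}, equivalent to a weak-order inequality within a single star, and since every $\lsub{\bullet}{d}$ is an order isomorphism this inequality is transported by $d$, which is exactly the orthogonality AOP demands. Thus $D$ is a morphism in $\rtd$ and in $\rlec$, and as $D^{2}=\Id_{\CR}$ it is an automorphism, giving $D\in\Aut_{\rlec}(\CR)$. The main obstacle throughout is the invariance identity in the displayed equation; once it is in hand the functoriality, the relation $D^{2}=\Id$, and the transport of order and orthogonality are all formal.
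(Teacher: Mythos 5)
Your proof is correct, and its skeleton --- explicit construction of $d$, reduction of part (b) to showing $D$ is a morphism in $\prootc$ with $D^{2}=\Id_{\CR}$, then upgrading to $\rlec$ --- is the same as the paper's. The two substantive steps, however, are carried out by different means. The paper disposes of the $\prootc$-verification as ``a straightforward calculation using the definitions and Proposition \ref{ssa7.1}'', i.e.\ it works through the abridgement $\CR^{a}$, where $N(\o(a))=e_{a}$ is an identity element of the Boolean algebra $\lsub{a}{\L}'$ and the maps $\L'(g)$ are unital ring isomorphisms, so the crux identity $\L(g)(N(\o(c)))=N(\o(a))$ (equivalently, via \ref{ssa7.1}(f), $N(g\o(c))=e_{a}+N(g)$) is immediate. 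You prove the same identity while staying entirely inside $\L$: you combine the protomesh isomorphism of Proposition \ref{ss2.9}(a) with the facts that order isomorphisms preserve existing joins and that the two elements $N(g)$ and $N(g)+N(\o(a))$ of $N(g)+\lsub{a}{\CL}$ already join to $N(\o(a))$. This is a genuine alternative, and it is arguably more transparent in the unabridged setting, where $N(\o(a))$ need not be an identity of $\lsub{a}{\L}$ and ``ring isomorphisms preserve identities'' is not available without first passing to $\CR^{a}$. For the categorical upgrade, the paper simply cites Remark \ref{ss4.9}: since $D^{2}=\Id_{\CR}$ makes $D$ an isomorphism in $\prootc$ between rootoids, it is automatically an isomorphism in $\rtd$ and $\rlec$. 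You instead re-verify the $\csl_{0}$-condition, the AOP, and the local-embedding condition by hand from the fact that each $\lsub{a}{d}$ is an order isomorphism; this is correct (it essentially repeats the proof of Lemma \ref{ss4.9}(d)) but could be replaced by that one-line citation. One small point of hygiene: your appeals to Proposition \ref{ssa7.1}(a) and (d) are, strictly speaking, appeals to that proposition applied to $\CR^{a}$, since its hypotheses include abridgedness; as parts (a) and (d) concern only the weak orders, which $\CR$ and $\CR^{a}$ share, this is the same harmless transfer the paper itself performs.
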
\begin{proof}  For $a\in \ob(G)$, let $d(a):=a'=\domn(\o(a))$ where $\o(a),a'$ are as defined in Proposition \ref{ssa7.1} for $\CR^{a}$. For $f\in \lrsub{a}{G}{b}$, define $d(f)\in \lrsub{d(a)}{G}{d(b)}$ by $d(f):=\o(a)^{*}f\o(b)$.  It is easy to check that this defines the
unique  functor as required in (a). By Remark \ref{ss4.9}, it is sufficient to check (b) with $\rlec$ replaced by $\prootc$.
 This  is done by  a straightforward calculation using  the definitions and  Proposition \ref{ssa7.1}. \end{proof}

\ssect{}\label{ssa7.3} A protorootoid $\CR=(G,\L,N)$ is said to be \emph{finite} if $\mor(G)$ is finite (and hence  $\ob(G)$ is finite) and $\lsub{a}{\L}$ is a finite Boolean ring for all $a\in \ob(G)$. \begin{cor*} Let $\CR=(G,\L,N)$ be a rootoid.
\begin{num} \item  If $\CR$ is interval finite, complete and connected, then $G$ is finite.
\item  If $\CR$ is cocycle finite, complete, abridged and connected, then it is finite.
\item  If $\CR$ is principal, complete, abridged and connected, then it is finite.
\end{num}
\end{cor*}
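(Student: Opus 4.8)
The plan is to handle the three parts in sequence, deducing each from the preceding one together with the structural facts about complete rootoids already in hand. The unifying observation is that completeness forces every weak order $\lsub{a}{\CL}$ to possess a maximum element, and that this maximum is exactly the top of the interval to which the successive finiteness hypotheses apply. For part (a), since $\CR$ is complete each $\lsub{a}{\CL}$ is a complete lattice and hence has a maximum element, which is $N(g_0)$ for some $g_0\in\lsub{a}{G}$. As $\eset$ is the minimum of $\lsub{a}{\CL}$ (see \ref{ss2.2}), the interval $[\eset,N(g_0)]_{\lsub{a}{\CL}}$ is all of $\lsub{a}{\CL}$, so interval finiteness applied with $g=g_0$ shows $\lsub{a}{\CL}$ is finite; faithfulness of the rootoid then transfers this through the order isomorphism $x\mapsto N(x)$ of \ref{ss2.5} to give that $\lsub{a}{G}$ is finite. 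Connectedness next bounds the object set: for each $b\in\ob(G)$ the set $\lrsub{a}{G}{b}$ is non-empty, and these sets are pairwise disjoint subsets of the finite set $\lsub{a}{G}$, so $\ob(G)$ is finite. Running the same maximum-element argument at every object shows each $\lsub{a}{G}$ is finite, whence $\mor(G)=\dotcup_{a\in\ob(G)}\lsub{a}{G}$ is finite, i.e. $G$ is finite.

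For part (b), I would first note that cocycle finiteness implies interval finiteness by Lemma \ref{ss3.6}(a), so part (a) already yields that $G$ is finite; it then remains to see that each Boolean ring $\lsub{a}{\L}$ is finite. The hypotheses here—faithful, complete, abridged—are precisely those of Proposition \ref{ssa7.1}, which identifies $e_a:=N(\o(a))$ as the identity element of $\lsub{a}{\L}$. Cocycle finiteness says exactly that $N(\o(a))$ is of finite rank; but the principal ideal generated by the identity $e_a$ is all of $\lsub{a}{\L}$, so finite rank of $e_a$ means $\lsub{a}{\L}$ itself is finite. Combined with the finiteness of $G$, this says $\CR$ is finite.

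Part (c) is then immediate: a principal protorootoid is cocycle finite by definition (\ref{ss3.3}(i)), so the hypotheses of (c) are a special case of those of (b) and finiteness follows. I do not expect any serious obstacle; the one point deserving care is the reduction in part (b), where it is the rank of the single element $N(\o(a))$—the top of the weak order—that controls the entire ambient Boolean ring. This is legitimate only because abridgedness makes that top element the multiplicative identity via Proposition \ref{ssa7.1}(b); without abridgedness one would control merely the sub-Boolean-ring generated by $\lsub{a}{\CL}$, not $\lsub{a}{\L}$ itself, which is why (b) and (c) require the abridged hypothesis whereas (a) does not.
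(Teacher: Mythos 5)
Your proposal is correct and follows essentially the same route as the paper's own proof: completeness supplies a maximum element $\o(a)$ of each weak order, interval finiteness applied to $[\eset,N(\o(a))]$ gives finiteness of each star, connectedness globalizes this, and for (b)--(c) the identity $e_a=N(\o(a))$ from Proposition \ref{ssa7.1} together with cocycle finiteness bounds the whole Boolean ring $\lsub{a}{\L}=[0,e_a]_{\lsub{a}{\L}}$. The only (immaterial) difference is in (a), where the paper writes each morphism as $g^{*}h$ with $g,h\in\lsub{a}{G}$ instead of first bounding $\ob(G)$ via the disjoint non-empty sets $\lrsub{a}{G}{b}$.
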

\begin{proof} If   $\ob(G)= \eset$, this is trivial, so assume that 
$\ob(G)\neq  \eset$. Let $a\in \ob(G)$ and assumptions be as in (a). Then $\lsub{a}{G}=[1_{a},\o(a)]_{\lsub{a}{\CL}}$ is finite. Since $G$ is connected, 
for any morphism $f\colon b\to c$ in $G$, one may write $f=g^{*}h$ for some $g,h\in \lsub{a}{G}$ so $\mor(G)$ is finite, proving (a). For (b), note that $\CR$ cocycle finite implies that $\CR$ is interval finite, so $G$ is finite by (a).  For $a\in \ob(G)$, 
$\lsub{a}{\L}=[0,e_{a}]_{\lsub{a}{\L}}=[0,N(\o(a))]_{\lsub{a}{\L}}$
which is finite  since $\CR$ is cocycle finite. This proves (b).
Finally, (c) follows from (b) since if $\CR$ is  principal, it is   cocycle finite.
\end{proof}
 

\end{document}